\theoremstyle{plain}
\newtheorem{theorem}{Theorem}[section]
\newtheorem{construction}[theorem]{Construction}
\newtheorem{example}[theorem]{Example}
\newtheorem{lemma}[theorem]{Lemma}
\newtheorem{definition}[theorem]{Definition}
\newtheorem{question}[theorem]{Question}
\journal{---}
\begin{document}

\begin{frontmatter}

\title{On the intersection of three or four transversals of the back circulant latin square $B_n$}

\author{Trent Gregory Marbach\fnref{label2}}
\fntext[label2]{The research for this paper was also supported in part by the Australian Research Council (grant number DP1092868)}
\ead{trent.marbach@uqconenct.edu.au}
\address{Department of Mathematics\\
The University of Queensland\\
Brisbane 4072\\
Australia}

\begin{abstract}
A paper by Cavenagh and Wanless \cite{NumbTran} determined the possible intersection of any two transversals of the back circulant latin square $B_n$, and used the result to completely determine the spectrum for $2$-way $k$-homogeneous latin trades. 
We generalize this problem to the intersection of $\mu$ transversals of $B_n$ such that the transversals intersect stably  (that is, the intersection of any pair of transversals is independent of the choice of the pair) and show that these structures can be used to construct  $\mu$-way $k$-homogeneous circulant latin trades of odd order.
We provide a number of basic existence and non-existence results for $\mu$ transversals of $B_n$ that intersect stably, as well as the results of a computational search for small $n$. 
This is followed by the principal results of this paper; a construction that covers a large portion of the spectrum when $n$ is sufficiently large, which requires certain base designs. These base designs are provided in the cases $\mu=3,4$, which were found by a computational search. 
We use this result to find the existence of $\mu$-way $k$-homogeneous circulant latin trades of odd order, for $\mu=3,4$.
\end{abstract}

\begin{keyword}
Latin square \sep Latin trade \sep Transversal \sep Diagonally cyclic \sep $\mu$-way $k$-homogeneous latin trade.

\end{keyword}

\end{frontmatter}

\section{Introduction}

A natural question in combinatorics  asks how two distinct examples of a certain combinatorial structure may intersect, which has been investigated for a large variety of different structures.
An extension of this is to consider the $\mu$-way intersections of the structures, and work has been done taking the underlying structure to be Steiner Triple Systems in  \cite{3WaySTS}, $m$-cycle systems in \cite{muWayMcycle}, and latin squares in \cite{3waySpectrum} and \cite{muWayVolumes}.

There has been an investigation into the possible intersection size of two transversals of the back circulant latin square \cite{NumbTran}, and so in a similar fashion we generalize from the intersection of two transversals to the intersection of a set of $\mu$ transversals.

The problem that this paper investigates is as follows:

\begin{question} \label{Qn:main}
For what $t$ does there exist a set of $\mu$ transversals of the back circulant latin square of order $n$, such that each pair of transversals intersect precisely in the same $t$ points?
\end{question}
 
These transversals can be used to construct $3$-way $k$-homogeneous latin trades of odd order, which will further extend our knowledge towards answering Question $1$ of \cite{3way}.

\subsection{Definitions}

A \emph{partial latin square} is an $n \times n$ array of cells, each being either empty or filled with  one of $n$ symbols such that each symbol appears at most once in each row and in each column. 
A \emph{latin square} is a partial latin square with no empty cells.   
We are able to think of a (partial) latin square as a set of triples; if a (partial) latin square, $L$, has the cell of row $r$ and column $c$ filled with symbol $e$, we will write $(r,c,e) \in L$. 
This is commonly called orthogonal array notation. 
In this paper, we write an interval of integers as $[a,b]=\{a, \ldots, b\}$. We index the rows, columns, and symbols of a latin square by $[0,n-1]$.
We will sometimes reference rows, columns, and symbols with indexes that are greater than $n-1$, by which we will always mean the representation of this index modulo $n$.

A \emph{diagonal} of a latin square $L$ is a set of $n$ cells of $L$ such that each row and each column is represented in the set of cells precisely once. 
A \emph{transversal} of a latin square is a diagonal that also has each symbol represented precisely once in the diagonal. 
See \cite{TranLSSurvey2} for a survey of transversals in latin squares.

A commonly studied latin square is the back circulant latin square, which is defined as $B_n = \{(r,c,r+c)  \mid r,c \in [0,n-1]\}$. 
The latin squares $B_n$ have a strong connection to diagonally cyclic latin squares, and are often used to prove facts about latin squares in general.  

A transversal of $B_n$ is equivalent to a diagonally cyclic latin square of order $n$, a complete mapping of the cyclic group of order $n$, an orthomorphism of the cyclic group of order $n$, a magic juggling sequence of period $n$, and a placement of $n$ non-attacking semi-queens of an $n \times n$ toroidal chessboard (see \cite{DiagCycLS}, \cite{Juggling}, \cite{Complete3Trans}).

Throughout this paper, we assume $n$ is odd, as it is well known that $B_n$ contains no transversals for any even $n$. 
The possible intersection sizes of any two transversals of $B_n$ has been determined:
\begin{theorem}\cite{NumbTran}
For each odd $n$, there exists a pair of transversals of $B_n$ that intersect in $t$ cells, when $n \neq 5$ for $t \in \{0, \ldots, n-3\}\cup \{n\}$, and when $n=5$ for $t \in \{0,1,5\}$. 
\end{theorem}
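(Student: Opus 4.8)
The plan is to translate the statement into a question about permutations of $\mathbb{Z}_n$ and then treat the non‑existence part and the constructive part separately. Recall that a transversal of $B_n$ is the same object as a complete mapping of $\mathbb{Z}_n$: a permutation $\theta$ of $[0,n-1]$ for which $x\mapsto x+\theta(x)$ is again a permutation, the associated transversal being $T_\theta=\{(x,\theta(x),x+\theta(x))\mid x\in[0,n-1]\}$. Two transversals $T_\theta$ and $T_{\theta'}$ share the cell of row $x$ exactly when $\theta(x)=\theta'(x)$, and distinct rows give distinct cells, so $|T_\theta\cap T_{\theta'}|$ equals $n$ minus the Hamming distance between $\theta$ and $\theta'$. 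Thus the task is to decide which Hamming distances occur between complete mappings of $\mathbb{Z}_n$; the forbidden values of $t$ correspond to distances $1$ and $2$, and for $n=5$ also distance $3$.

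For the non‑existence part I would argue as follows. Two distinct permutations of an $n$‑set never agree in exactly $n-1$ positions, since the last value is then forced; this rules out $t=n-1$. If $\theta'$ agrees with $\theta$ off a pair $\{a,b\}$, then comparing the (common) images forces $\theta'(a)=\theta(b)$ and $\theta'(b)=\theta(a)$; the requirement that $x\mapsto x+\theta'(x)$ be a permutation then forces $\{a+\theta(b),\,b+\theta(a)\}=\{a+\theta(a),\,b+\theta(b)\}$, and the only matching consistent with injectivity of $\theta$ gives $a+\theta(b)=b+\theta(b)$, that is, $a=b$ — a contradiction. Hence $t=n-2$ is impossible for every odd $n$. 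For $n=5$ this already excludes $t\in\{3,4\}$, and ruling out $t=2$ is a finite check over the transversals of $B_5$, leaving exactly $t\in\{0,1,5\}$.

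On the constructive side the extreme cases are quick. For $t=n$ take any transversal twice. For $t=0$ take $\theta(x)=x$ and $\theta'(x)=x+1$: both are complete mappings because $n$ is odd, and they disagree in every row. More generally, a linear complete mapping $\theta(x)=sx$ with $\gcd(s,n)=\gcd(s+1,n)=1$ meets $T_{\mathrm{id}}$ (itself a transversal since $n$ is odd) in exactly $\gcd(s-1,n)$ cells, so by varying $s$ subject to those coprimality conditions one already realises, via the Chinese Remainder Theorem, a range of values of $t$ dividing $n$.

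What remains — the range $1\le t\le n-3$ not reached by linear maps — is the substantive part, and it genuinely requires non‑linear complete mappings (for instance, when $3\mid n$ every linear complete mapping meets $T_{\mathrm{id}}$ in a multiple of $3$ cells). The natural approach is to build the second transversal from a well‑understood one by a bounded sequence of local \emph{switches}, each altering the transversal on a short window of $\mathbb{Z}_n$, preserving the complete‑mapping property, and changing the agreement count by a prescribed small amount, while finitely many small orders not reached by the generic construction are settled by computer. The main obstacle is the design of these switches: the $t=n-2$ impossibility is exactly the statement that no switch is supported on two cells, so the smallest switches touch three or more cells, and a short computation shows that a three‑cell switch on rows $r_1,r_2,r_3$ imposes linear relations among the $r_i$ and the $\theta(r_i)$ whose solvability can fail for arithmetically special base maps. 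Assembling a family of switches that together hit \emph{every} $t$ in $\{1,\dots,n-3\}$ for all large odd $n$, and clearing up the small exceptions, is where the real effort lies; I would expect to handle the regime $t$ near $n$ (small symmetric difference) and the regime $t$ small (large symmetric difference, obtained from shifts and linear maps corrected by one or two patches) with separate families of switches and arrange them to overlap.
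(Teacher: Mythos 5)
This statement is quoted in the paper from Cavenagh and Wanless \cite{NumbTran} and is not proved there, so there is no internal proof to compare against; judged on its own terms, your proposal has a genuine gap. The reduction to complete mappings of $\mathbb{Z}_n$, the identification of intersection size with $n$ minus Hamming distance, the $t=n$ and $t=0$ cases, the exclusion of $t=n-1$ and $t=n-2$, and the observation that linear maps $\theta(x)=sx$ meet the identity transversal in $\gcd(s-1,n)$ cells are all correct. But the theorem, as stated, is an existence claim for \emph{every} $t\in\{0,\ldots,n-3\}\cup\{n\}$ (and $\{0,1,5\}$ for $n=5$), and that is precisely the part you do not prove. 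Linear complete mappings only realise values of $t$ of the form $\gcd(s-1,n)$ (together with $0$ and $n$), which for prime $n$ gives nothing strictly between $1$ and $n$; and your proposed machinery of ``local switches'' is described only as a strategy, with the decisive step --- exhibiting switch families whose effects on the agreement count cover all of $\{1,\ldots,n-3\}$ for every odd $n$, plus the treatment of small orders and the $n=5$ anomaly beyond a finite check you do not perform --- explicitly deferred (``where the real effort lies''). That missing construction is the entire substance of the cited result, so the proposal is a plausible plan rather than a proof.

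Two smaller points. First, the non-existence arguments for $t=n-1$ and $t=n-2$, while correct, are not actually required by the statement as quoted, which asserts only existence; the effort would be better spent on the constructive side. Second, when you do attempt the switch analysis, be aware that the agreement count must be controlled simultaneously with the complete-mapping property on a cyclic group, and intersections realising $t$ close to $n-3$ are exactly the delicate regime (your own $t=n-2$ argument shows the smallest possible perturbation has support of size $3$), so the two regimes you propose to treat separately must be shown to overlap for every odd $n$, not just asymptotically.
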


We consider a generalization of such intersections of pairs of transversals to the intersection of $\mu$ transversals.
\begin{definition} \label{maindef}
A collection of $\mu$ transversals $T_1, \ldots, T_\mu$ intersect \emph{stably} in $t$ points if there is a set $S\subseteq [0,n-1]^2$ such that  $S=\cap_{i=1}^{\mu} T_i$ and $\emptyset= (T_i \cap T_j )\setminus S$ for each $1 \leq i < j \leq \mu$. 
\end{definition}

Informally, if there is a cell $(i,j,k)\in S$, then $(i,j,k)$ appears in each transversal  $T_1, \ldots, T_\mu$. 
If there is a cell $(i',j',k') \in T_{\alpha}$ with $(i',j',k') \notin S$, then no other transversal contain $(i',j',k')$. 

Then Question \ref{Qn:main} is asking for what $t$ does there exist a set of $\mu$ transversals of $B_n$ that intersect stably in $t$ points.
The main results of this paper are the following two theorems:
\begin{theorem}\label{overall3}
For odd $n \geq 33$, let $I$, $I'$, $d$ and $d'$ be the unique integers such that $n= 18I +9 + 2d$ and $n = 22I'+11+2d'$, $I,I' \geq 1$, $0 \leq d < 9$ and $0 \leq d' < 11$. Then there exist three transversals of $B_n$ that intersect stably in $t$ points for $t\in [min(3+d',d), n] \setminus [n-5,n-1]$ except, perhaps, when:
\begin{itemize}
\item $n=51$ and $t=29$,
\item $n=53$ and $t=30$.
\end{itemize}
\end{theorem}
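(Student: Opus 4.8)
The plan is to pass through the standard dictionary between transversals of $B_n$ and orthomorphisms of $\mathbb{Z}_n$, and then to assemble the required family from two applications of a general amplification construction that is fed with small, computer-verified base designs. First I would record the dictionary: a transversal of $B_n$ is the same thing as a permutation $\sigma$ of $\mathbb{Z}_n$ for which $x\mapsto x+\sigma(x)$ is again a permutation, the corresponding transversal being $\{(x,\sigma(x),x+\sigma(x)):x\in\mathbb{Z}_n\}$. Two such transversals share the cell in row $x$ exactly when $\sigma_i(x)=\sigma_j(x)$, so a triple $T_1,T_2,T_3$ intersects stably in $t$ points precisely when there is a set $A\subseteq\mathbb{Z}_n$ with $|A|=t$ on which $\sigma_1,\sigma_2,\sigma_3$ all agree, while they are pairwise distinct at every point of $C:=\mathbb{Z}_n\setminus A$. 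Restricting the three transversals to the $n-t$ rows of $C$ is then exactly a $3$-way $3$-homogeneous circulant latin trade on $C$, and conversely any such trade that extends to a common orthomorphism off $C$ produces the desired triple. This reformulation is what makes the construction possible and is the object one actually builds.

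Next I would invoke the general amplification construction established earlier in the paper. It takes as seeds a small triple of stably intersecting transversals of some $B_{n_0}$, together with a fixed ``filler'' block, and after $I$ iterations returns a triple of stably intersecting transversals of $B_{n_0+18I}$, with the intersection size shifted by an amount that — by varying the filler among a short list of small stably intersecting triples of transversals of $B_9$ — runs over a full interval of integers rather than a sparse set; this is the source of the period $18$, the factor $2$ being forced by the requirement that all orders remain odd. The same construction with the filler built from $B_{11}$ gives a second family of period $22$. The seeds required, one for each residue class of $n$ modulo $18$ (respectively $22$) and for each small target value of $t$, are precisely the base designs produced by the computational search described in the paper; the concrete work here is to tabulate them and check that each satisfies the hypotheses of the construction.

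Then comes the bookkeeping. Fixing odd $n\ge 33$ and writing $n=18I+9+2d=22I'+11+2d'$ as in the statement, the first family realises every $t$ from $d$ upward and the second every $t$ from $3+d'$ upward, both continuing up to the largest value the non-existence results allow, namely $n-6$; hence their union is $[\min(3+d',d),\,n-6]$. Adjoining the trivial design $T_1=T_2=T_3$ gives $t=n$, and the band $[n-5,n-1]$ is genuinely empty: \cite{NumbTran} already excludes $t\in\{n-1,n-2\}$, while the extra constraints forced by the three-way structure — a $3$-homogeneous circulant latin trade must have at least six rows — exclude $t\in\{n-3,n-4,n-5\}$. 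This is exactly $[\min(3+d',d),\,n]\setminus[n-5,n-1]$. Finally, for $(n,t)\in\{(51,29),(53,30)\}$ the two families' left endpoints together with the orders for which seeds were actually found leave precisely that single value uncovered, so each is recorded as a possible exception.

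The step I expect to be the real obstacle is proving that the amplification construction (a) keeps the three transversals pairwise disagreeing on every newly added row and creates no accidental new agreements, so that stability is preserved \emph{exactly} rather than merely up to a bounded error, and (b) lets the added intersection size sweep out a true interval; it is the contiguity of each family's $t$-range, combined with the offset between the two left endpoints $d$ and $3+d'$, that makes the union gap-free. The two exceptional pairs $(51,29)$ and $(53,30)$ are exactly the places where this contiguity is tight and no seed of the right order turned up, which is why they cannot be absorbed and must be listed separately; verifying that the computer search does supply every other seed needed, and that no further gaps appear, is the part of the argument that is certified by the search rather than derived by hand.
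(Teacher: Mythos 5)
Your outline coincides with the paper's strategy: compose stably intersecting partial transversals placed in subsquares of sizes $b=9$ and $b=11$ (Construction \ref{mainConstruction} and Theorem \ref{mainTheorem}), seed the subsquares with the computer-found base designs, and take a union of the resulting ranges of $t$. But the bookkeeping you assert is where the content lies, and as stated it is wrong. With a fixed low seed the achievable set is $\{d\}+\{0\}+2I\,\Omega_3^{9}(9)$, respectively $\{d'\}+\{3\}+2I'\,\Omega_3^{11}(11)$; by Lemma \ref{lemmaSet} these sumsets are \emph{not} intervals (they have holes near their top, e.g.\ $[22I'-2+d',22I'+2+d']$ is missing in the second case), and they top out at $18I+d$, respectively $22I'+3+d'$, well short of $n-6$. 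So neither family ``realises every $t$ from $d$ (resp.\ $3+d'$) up to $n-6$'', and adjoining only the trivial triple with $t=n$ leaves the entire upper part of the spectrum uncovered. The paper closes it with a third family that your proposal omits: since $11+d'\in\Omega_3^{11}(11+d')$ (a single partial transversal of the large subsquare repeated three times, Lemma \ref{high}), Theorem \ref{3thm-high} yields all $t\in[11+2d',n]\setminus[n-5,n-1]$, and the low-seed families are needed only below $11+2d'$ (Theorem \ref{3thm-low}, via Lemmas \ref{3lower22} and \ref{3lower18}). The exceptions $(51,29)$ and $(53,30)$ fall out of exactly this patching: they are the points of the hole $[22I'-2+d',22I'+2+d']$ that lie below $11+2d'$ and are not rescued by the $b=9$ family when $I'=1$ and $d'\in\{9,10\}$ — not a matter of ``left endpoints'' or of seeds that failed to turn up. You flagged the interval-sweeping property as the expected obstacle, but the proposal neither proves it nor states it correctly.

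Two further points. The theorem is a pure existence statement on the displayed set, so you neither need nor can claim that $[n-5,n-1]$ is ``genuinely empty'': the paper's Lemma \ref{lem:nonexistence} excludes only $t\in\{n-2,n-1\}$ for $\mu=3$, your ``at least six rows'' argument is not established anywhere, and in any case a non-existence bound cannot certify that the construction reaches $n-6$ — that must come from the high-intersection family above. Also, the ``fillers'' are not transversals of $B_9$ or $B_{11}$: the construction requires partial transversals of the subsquares using the prescribed interleaved symbol sets, which is exactly what membership in $\Omega^{b}_{\mu}$ encodes and what the appendix tables certify; without that restriction the union of the pieces is not a transversal of $B_n$. (Likewise, restricting the three transversals to the rows off the common set does not give a ``$3$-way $3$-homogeneous trade on $C$''; the object obtained from such a triple is the circulant $(n-t)$-homogeneous trade of Theorem \ref{trans2trade}, which plays no role in the proof of this theorem.)
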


\begin{theorem} \label{overall4}
For odd $n \geq 33$, let $I$, $I'$, $d$ and $d'$ be the unique integers such that $n= 18I +9 + 2d$ and $n = 22I'+11+2d'$, $I,I' \geq 1$, $0 \leq d < 9$ and $0 \leq d' < 11$. Then there exist four transversals of $B_n$ that intersect stably in $t$ points for $t\in [min(3+d',d),  n] \setminus (\{n-15\} \cup [n-7, n-1])$, except, perhaps, when:

\begin{itemize}
\item $33 \leq n \leq 43$ and $t \in [10+d',11+d'] \cup [ n-14, n-12]$,
\item $45 \leq n \leq 53$ and $t \in [-1+d', 2+d'] \cup [10+d',11+d'] \cup [18+d', 20+d']$,
\item $63 \leq n \leq 75$ and  $t \in [7+d, 8+d]$.
\end{itemize}
\end{theorem}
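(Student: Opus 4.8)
The plan is to deduce Theorem~\ref{overall4} from the general block construction by taking $\mu=4$, feeding it the base designs found for four transversals, and then cleaning up the small orders. It is convenient to regard a transversal of $B_n$ as an orthomorphism $\sigma$ of $\mathbb{Z}_n$ (a permutation with $x\mapsto\sigma(x)-x$ also a permutation), the cell in row $x$ being $(x,\sigma(x)-x,\sigma(x))$; then four transversals $\sigma_1,\dots,\sigma_4$ intersect stably if and only if, for every $x$, the values $\sigma_1(x),\dots,\sigma_4(x)$ are either all equal or pairwise distinct, and $t$ is the number of $x$ at which they are all equal. In this language the whole problem becomes one of splicing together short cyclic pieces on which the four orthomorphisms behave in a prescribed way.

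First I would invoke the general construction. It builds $\sigma_1,\dots,\sigma_4$ by concatenating a short \emph{base design} with $I$ copies of a fixed \emph{period block}, where concatenation means interleaving the index sets so that the global maps are again orthomorphisms; one version of the construction uses a period block of length $18$ and one uses length $22$, which is precisely why $n$ acquires the two normal forms $18I+9+2d$ and $22I'+11+2d'$. Each period block can be inserted in one of two modes: in one mode the block is ``frozen'' (all four orthomorphisms agree on it) and contributes its full length to $t$, in the other it contributes a smaller fixed amount. Varying the modes over the $I$ blocks and over the base sweeps out a full interval of attainable values of $t$, whose right endpoint is $n$ minus a small boundary defect and whose left endpoint is dictated by the base design: the period-$18$ version yields left endpoint $d$ and the period-$22$ version yields left endpoint $3+d'$, so keeping whichever is smaller produces the stated bound $\min(3+d',d)$. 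The flat excisions $\{n-15\}$ and $[n-7,n-1]$ near the top lie outside the range of either construction (the boundary blocks cannot be frozen) and are covered by the non-existence results proved earlier.

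Second I would pin down the $\mu=4$ base designs. This is the step that needs the computer: for each odd order in the ranges $9\le m\le 25$ (period-$18$ version) and $11\le m\le 31$ (period-$22$ version) one must exhibit a base design of four stably-intersecting partial transversals realizing the required small intersection numbers and, additionally, meeting the boundary-matching conditions that make the splice with a period block a genuine transversal. I would present these base designs and the period blocks explicitly as orthomorphism data on the relevant cyclic pieces, and carry out the finite bookkeeping that verifies compatibility at the seams. Because the stability constraint with four transversals is considerably tighter than with three, fewer base designs survive the search, and it is exactly this that forces the sporadic exceptions absent from Theorem~\ref{overall3}.

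The main obstacle, and the source of the three exception ranges $33\le n\le 43$, $45\le n\le 53$ and $63\le n\le 75$ in the statement, is the interval of small $t$ for small $n$: when $I$ or $I'$ is small there are too few period blocks available to tune $t$ across the whole claimed interval, so the missing values must come from base designs alone, and the search does not supply all of them. I would therefore finish by listing precisely which residual pairs $(n,t)$ are left uncovered, grouped by the three ranges above, and record them as the stated exceptions (each is flagged ``perhaps'' because it is not settled by the general method, though it could be revisited by a targeted search). I do not expect any asymptotic estimate to be the difficulty; everything hard is in the explicit design of the length-$18$ and length-$22$ period blocks for four transversals and in certifying that the finite base-design search is exhaustive over the stated ranges.
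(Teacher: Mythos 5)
Your overall plan---feed the block construction (Theorem~\ref{mainTheorem}) with computer-found base designs for $\mu=4$ and track which totals $t$ arise---is indeed the paper's strategy, but two of your specific claims would not deliver the theorem as stated. First, you only allow the period-$18$ and period-$22$ versions (base sizes $b=9$ and $b=11$). The $b=11$ construction misses $\{n-23\}\cup[n-15,n-12]\cup[n-7,n-1]$ at the top (Lemma~\ref{high11}), and the $b=9$ construction cannot recover $\{n-23\}$ or $[n-14,n-12]$ either, because the sumset $2I\,\Omega_4^{9}(9)$ omits the deficits $10$ through $15$ and $19$ through $23$ (Lemma~\ref{lemma2I}). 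The paper therefore needs a third base size, $b=15$, writing $n=30I''+15+2d''$ and using $\Omega_4^{15}(15)\supseteq\{1,2,3,4,5,15\}$ together with $15+d''\in\Omega_4^{15}(15+d'')$ (Lemmas~\ref{baseblocks}, \ref{high}, \ref{high15}) precisely to fill $\{n-23\}\cup[n-14,n-12]$ for $n\ge 45$ in Theorem~\ref{thm-high}; without this ingredient your argument leaves those values as extra exceptions for all $n\ge 45$, contradicting the statement (which confines the $[n-14,n-12]$ exception to $33\le n\le 43$). Second, your model in which each block has only \emph{two} modes (frozen, contributing its full length, or contributing one fixed smaller amount) cannot ``sweep out a full interval'': with per-block contributions in, say, $\{0,11\}$ the totals form a sparse set. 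What is actually required is that each base block admit several stable-intersection sizes ($\{0,1,9\}$ for $b=9$, $\{0,1,2,3,11\}$ for $b=11$, $\{1,\dots,5,15\}$ for $b=15$), and the attainable totals are then the sumsets computed via Lemmas~\ref{lemmaSet} and~\ref{lemma2I}, which are intervals minus specific gaps; those gaps are exactly where the top-end excisions and several of the exceptions originate.

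A smaller inaccuracy: the excised values $\{n-15\}\cup[n-7,n-1]$ are not ``covered by the non-existence results proved earlier''; Lemma~\ref{lem:nonexistence} only rules out $t\in[n-3,n-1]$ when $\mu=4$. The remaining excised values are simply not claimed by the theorem, so nothing need be proved about them, but they are not known to be impossible. Your account of the small-$n$ exceptions (too few blocks to tune $t$ when $I,I'$ are small, so the burden falls on base designs the search did not supply) is broadly in the right spirit, but in the paper they arise from the specific gaps of the $b=9$ and $b=11$ sumsets intersecting the low range $[\min(3+d',d),\,11+2d']$, and the proof of Theorem~\ref{thm-low} must work through the cases $33\le n\le 43$, $45\le n\le 53$, $55\le n$ (with the sub-case $I=3$ giving $63\le n\le 75$) explicitly; your sketch would need that case analysis, plus the verification that $0\in\Omega_4^9(9+d)$ and $3\in\Omega_4^{11}(11+d)$ for all admissible $d$ (Lemma~\ref{low}), to be complete.
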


\section{Results}

\subsection{Basic results}

\begin{lemma}
For an odd integer $n$, there exists a set of $\mu$ transversals of $B_n$ which intersect stably in $n$ points, for any $\mu \geq1$.
\end{lemma}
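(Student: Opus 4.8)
The plan is to observe that the hypothesis pins down the structure completely: a transversal of $B_n$ has exactly $n$ cells, so if each pair among $T_1,\dots,T_\mu$ is to intersect in the same $n$ cells, then $T_i\cap T_j$ must equal all of $T_i$ and all of $T_j$, forcing $T_1=\cdots=T_\mu$. Hence the entire content of the lemma is that $B_n$ has a transversal when $n$ is odd, after which we simply repeat it $\mu$ times.

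First I would exhibit an explicit transversal. Since $n$ is odd, $\gcd(2,n)=1$, so $i\mapsto 2i$ is a bijection on $[0,n-1]$ modulo $n$. Set $T=\{(i,i,2i)\mid i\in[0,n-1]\}$, reading the symbol entry modulo $n$. Then each row $i$, each column $i$, and each symbol $2i$ is represented exactly once, and $(i,i,2i)\in B_n$ because $i+i=2i$; so $T$ is a transversal of $B_n$. (Alternatively one could invoke any of the standard equivalences recorded above, e.g.\ the existence of a complete mapping of the cyclic group of odd order, to get $T$.)

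Next I would take $T_1=\cdots=T_\mu=T$ and check Definition \ref{maindef} directly with $S=T$: we have $S=\bigcap_{i=1}^{\mu}T_i=T$ with $|S|=n$, and $(T_i\cap T_j)\setminus S=T\setminus T=\emptyset$ for every $1\le i<j\le\mu$. Thus this collection of $\mu$ transversals intersects stably in $n$ points, as required. There is no genuine obstacle here; this is a base case that fixes the top of the spectrum, and the only use of the parity assumption is in producing $T$, which is exactly the step that fails for even $n$.
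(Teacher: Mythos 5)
Your proof is correct and follows essentially the same route as the paper: exhibit the main diagonal $\{(i,i,2i)\mid i\in[0,n-1]\}$ as a transversal of $B_n$ (using that $2$ is invertible modulo odd $n$) and take all $\mu$ transversals to be identical copies of it. The preliminary observation that stable intersection in $n$ points forces the transversals to coincide is a nice extra remark but not needed for the existence claim.
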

\begin{proof}
For odd $n$, the main diagonal's cells $(i,i,2i)$ with $i \in [0,n-1]$ form a transversal of $B_n$, showing at least one transversal exists. 
A set of $\mu$ transversals with each transversal identical intersects stably in $n$ points.
\end{proof}

\begin{lemma}
For an odd integer $n$, there exists a set of $\mu$ transversals of $B_n$ which intersect stably in $0$ points, for any $1 \leq \mu  \leq n$.
\end{lemma}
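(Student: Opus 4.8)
The plan is to exhibit an explicit decomposition of $B_n$ into $n$ pairwise disjoint transversals and then simply select $\mu$ of them. For each $k \in [0,n-1]$ define the diagonal
\[
D_k = \{(i,\, i+k,\, 2i+k) \mid i \in [0,n-1]\},
\]
where, as in the conventions of the excerpt, indices are read modulo $n$. First I would check that each $D_k$ is a transversal of $B_n$: it lies in $B_n$ since $(i)+(i+k) = 2i+k$; the rows $i$ and the columns $i+k$ each run over all of $[0,n-1]$ as $i$ does; and the symbols $2i+k$ also run over all of $[0,n-1]$ because $\gcd(2,n)=1$ when $n$ is odd, so $i \mapsto 2i+k$ is a bijection on $\mathbb{Z}_n$. (This is the only step that uses the oddness of $n$, and it is exactly where the argument would break for even $n$, consistent with the standing assumption of the paper.)

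Next I would verify that $D_k$ and $D_{k'}$ are disjoint whenever $k \neq k'$: a cell of $D_k$ in row $i$ has column $i+k$, while the cell of $D_{k'}$ in row $i$ has column $i+k' \neq i+k$, so the two diagonals share no cell. Hence $D_0, D_1, \ldots, D_{n-1}$ are $n$ pairwise disjoint transversals of $B_n$ (and in fact partition the $n^2$ cells, though this is not needed).

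Finally, given $\mu$ with $1 \le \mu \le n$, I would take the collection $T_1 = D_0,\, T_2 = D_1,\, \ldots,\, T_\mu = D_{\mu-1}$. For every pair $1 \le \alpha < \beta \le \mu$ we have $T_\alpha \cap T_\beta = \emptyset$, so setting $S = \emptyset$ fulfils the requirements of Definition \ref{maindef} with $t = 0$: indeed $S = \bigcap_{i=1}^{\mu} T_i = \emptyset$ and $(T_\alpha \cap T_\beta)\setminus S = \emptyset$ for each pair. This proves the statement.

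I do not expect any real obstacle here; the result is elementary. The one point deserving care is the bijectivity of $i \mapsto 2i+k$ on $\mathbb{Z}_n$, which is precisely the place the hypothesis that $n$ is odd enters and which also explains why $\mu$ can be taken as large as $n$ but no larger — there are only $n$ cells in any fixed row, so at most $n$ pairwise disjoint transversals can exist.
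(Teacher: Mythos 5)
Your construction is exactly the one the paper uses: the paper's proof takes $T_{\alpha}=\{(i,i+\alpha,2i+\alpha)\mid i\in[0,n-1]\}$ for $\alpha\in[0,\mu-1]$, which are precisely your shifted diagonals $D_0,\ldots,D_{\mu-1}$. Your argument is correct and simply spells out the verification (bijectivity of $i\mapsto 2i+\alpha$ via oddness of $n$, and pairwise disjointness) that the paper leaves implicit.
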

\begin{proof}
Consider the $\mu$ transversals of $B_n$ given by $T_{\alpha} = \{(i, i+\alpha, 2i+\alpha \mid i \in [0,n-1]\}$ for $\alpha \in [0,\mu-1]$. The $\mu$ transversals intersect stably in $0$ points.
\end{proof}

\begin{lemma} \label{lem:wanlessIdea}
For odd integer $n$ with $n=m_1 m_2$, there exists a set of $m_2$ transversals of $B_n$ which intersect stably in $n-m_2$ points.
\end{lemma}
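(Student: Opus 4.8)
The plan is to build the $m_2$ transversals so that they all agree except on a single "block" of $m_2$ cells where they are cyclic shifts of one another. Write $n = m_1 m_2$. Start from the main diagonal transversal $D = \{(i,i,2i) \mid i \in [0,n-1]\}$ and fix a set $R$ of $m_2$ row-indices that is arithmetic with common difference $m_1$, say $R = \{0, m_1, 2m_1, \dots, (m_2-1)m_1\}$. On the rows outside $R$, every transversal $T_\alpha$ will use the diagonal cell $(i,i,2i)$; the stable intersection $S$ will then contain exactly the $n - m_2$ cells in rows $[0,n-1]\setminus R$, which is the target intersection size. It remains to define, for each $\alpha \in [0,m_2-1]$, a system of $m_2$ cells occupying the rows of $R$, the columns $\{r + ? \}$, and the symbols that the diagonal would have used on $R$, in such a way that (i) each $T_\alpha$ is a transversal and (ii) distinct $T_\alpha$, $T_\beta$ share none of these $m_2$ special cells.

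The key step is the choice of the permutation on the block $R$. On the rows $r = j m_1$ ($j \in [0,m_2-1]$) the diagonal uses columns $\{j m_1\}$ and symbols $\{2 j m_1\}$; both column-set and symbol-set are again arithmetic progressions with difference $m_1$ (resp.\ $2 m_1$), hence each is a coset-like copy of $\mathbb{Z}_{m_2}$ inside $\mathbb{Z}_n$. So within the block we are exactly in the situation of finding a transversal of a back circulant latin square of order $m_2$, scaled by $m_1$. I would take $T_\alpha$ to place, in row $j m_1$, the cell with column $(j + \alpha) m_1$ wrapped appropriately — more precisely use the shift $j \mapsto j + \alpha \pmod{m_2}$ on the index set $[0,m_2-1]$, giving column $(j m_1 + c_\alpha(j))$ and symbol adjusted to keep the row/column/symbol incidence of $B_n$ (i.e.\ symbol $=$ row $+$ column). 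Because $m_2$ is odd (as $n$ is odd), the map $j \mapsto 2j$ is a bijection of $\mathbb{Z}_{m_2}$, which is precisely what guarantees that cycling the columns through the block still hits each of the $m_2$ required symbols exactly once; this is the same reason $B_{m_2}$ has a transversal. So each $T_\alpha$ is genuinely a transversal of $B_n$: rows are covered by construction, and columns and symbols are covered because outside $R$ they match the diagonal and inside $R$ the shifted block is itself a transversal of the order-$m_2$ sub-structure.

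For the stable-intersection property: if $\alpha \neq \beta$, then on every row $j m_1 \in R$ the cells of $T_\alpha$ and $T_\beta$ lie in columns $(j m_1 + c_\alpha(j))$ and $(j m_1 + c_\beta(j))$ with $c_\alpha(j) \neq c_\beta(j)$ (since $c_\alpha(j) - c_\beta(j) \equiv (\alpha - \beta) m_1 \not\equiv 0 \pmod n$, as $0 < |\alpha-\beta| m_1 < m_1 m_2 = n$), so the two transversals disagree in every block row; on the non-block rows they agree. Hence $T_\alpha \cap T_\beta$ is exactly the set of $n - m_2$ diagonal cells in rows $[0,n-1]\setminus R$, which is the same set $S$ for every pair, so the transversals intersect stably in $n - m_2$ points, as required.

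The main obstacle I anticipate is purely bookkeeping: verifying that the "shifted block" really is a transversal of the induced order-$m_2$ back circulant structure, i.e.\ that after applying $j \mapsto j + \alpha$ to the columns the symbols $\{j m_1 + (j+\alpha) m_1\} = \{(2j+\alpha) m_1 \bmod n\}$ still range over the correct $m_2$-element symbol set without collision. This reduces to the standard fact that $2$ is invertible mod the odd number $m_2$, so once that is spelled out the rest is immediate. A clean way to organize the write-up is to define $T_\alpha$ explicitly as $\{(i, i, 2i) : i \notin R\} \cup \{(j m_1,\, (j+\alpha)m_1,\, (2j+\alpha)m_1) : j \in [0,m_2-1]\}$ and then check the three transversal conditions and the pairwise intersection in that order.
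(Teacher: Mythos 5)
Your construction is exactly the paper's: the paper defines $T_\alpha = T^1_\alpha \cup T^2$ with $T^1_\alpha = \{(m_1 i,\ m_1(i+\alpha),\ m_1(2i+\alpha))\}$ and $T^2$ the diagonal cells in rows not divisible by $m_1$, which is precisely your final explicit formula, and the stable intersection $T_\alpha \cap T_\beta = T^2$ of size $n-m_2$ is argued the same way. Your write-up is correct and in fact supplies more verification (invertibility of $2$ modulo the odd $m_2$, column disagreement on block rows) than the paper's very terse proof.
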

\begin{proof}
Let $T^1_{\alpha} = \{(m_1 i, m_1 (i+\alpha),  m_1 (2i+\alpha)) \mid 0 \leq i \leq  m_2-1\}$ and $T^2 = \{ i, i,  2i  \mid 0 < i \leq  n-1 \text{ and } m_1 \nmid i\}$, for $\alpha \in [1,m_2]$. 
Define transversals $T_{\alpha} = T^1_\alpha \cup T^2$ for $\alpha \in [1,m_2]$.  
Then $T_{\alpha} \cap T_{\beta} = T^2$, for each $1 \leq \alpha<\beta \leq m_2$, where $|T^2|=n-m_2$. 
\end{proof}

\begin{lemma} \label{lem:nonexistence}
For an odd integer $n$, there does not  exists a set of $\mu$ transversals of $B_n$ that intersect stably in $t$ points, for $t \in \{n-\mu+1, \ldots, n-1\}$, for any $\mu \geq 2$.
\end{lemma}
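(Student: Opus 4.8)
The plan is to argue by contradiction using a counting argument on the cells that lie outside the common intersection set $S$. Suppose $T_1,\dots,T_\mu$ intersect stably in $t$ points with $n-\mu+1 \le t \le n-1$, and let $S = \cap_{i=1}^\mu T_i$, so $|S| = t$. Since each $T_\alpha$ has exactly $n$ cells, write $T_\alpha = S \sqcup R_\alpha$ where $R_\alpha = T_\alpha \setminus S$ has size $n - t =: s$, and note $1 \le s \le \mu - 1$. The stability condition says precisely that the sets $R_1,\dots,R_\mu$ are pairwise disjoint: if a cell lay in $R_\alpha \cap R_\beta$ it would be in $(T_\alpha \cap T_\beta)\setminus S$, contradicting Definition \ref{maindef}.

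First I would extract the structural consequence of $S$ being contained in many transversals. The $s$ cells of $R_\alpha$ occupy exactly $s$ rows, $s$ columns, and $s$ symbols — namely the rows, columns, and symbols \emph{not} used by $S$ (the same triple of sets $(\mathcal{R},\mathcal{C},\mathcal{E})$ of "missing" coordinates works for every $\alpha$, since $S$ is common to all $T_\alpha$). So each $R_\alpha$ is itself a transversal of the $s\times s$ subarray of $B_n$ on rows $\mathcal{R}$, columns $\mathcal{C}$, symbols $\mathcal{E}$; call this subarray $P$. Then the $R_\alpha$ are $\mu$ pairwise-disjoint transversals of the $s$-cell partial latin square $P$. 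Since $P$ has only $s$ cells in total and each $R_\alpha$ uses $s$ of them, pairwise disjointness of $R_1,\dots,R_\mu$ forces $\mu s \le s$, i.e. $\mu \le 1$ — unless $s = 0$. As $\mu \ge 2$, we must have $s = 0$, i.e. $t = n$, contradicting $t \le n-1$.

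The one point that needs care, and which I expect to be the main obstacle in writing this cleanly, is justifying that $R_\alpha$ is a transversal of a \emph{well-defined common} subarray — i.e. that the missing row set, missing column set, and missing symbol set are each determined by $S$ alone and hence identical across all $\alpha$. This is immediate from $S \subseteq T_\alpha$: the $t$ cells of $S$ occupy $t$ distinct rows (since $T_\alpha$ is a diagonal), $t$ distinct columns, and $t$ distinct symbols (since $T_\alpha$ is a transversal), and the complementary sets have size $s$ and do not depend on $\alpha$. Then $R_\alpha = T_\alpha \setminus S$ must cover exactly the remaining $s$ rows, $s$ columns, and $s$ symbols. A subtlety worth a remark: one should note that $P$ may not be a latin square, but that is irrelevant — we only use that it has exactly $s$ cells, which forces the pigeonhole bound $\mu s \le s$. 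I would present the argument directly rather than by contradiction if cleaner: from stability, $\sum_{\alpha=1}^\mu |R_\alpha| = |\bigsqcup_\alpha R_\alpha| \le |P| = s = n-t$, so $\mu(n-t) \le n-t$, giving $n - t = 0$ or $\mu \le 1$; hence no such configuration exists for $2 \le \mu$ and $t \ne n$, and in particular none for $t \in \{n-\mu+1,\dots,n-1\}$.
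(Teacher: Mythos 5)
There is a genuine gap, and it sits exactly at the point you flagged and then dismissed: the claim that the subarray $P$ ``has only $s$ cells in total.'' Your $P$ is the set of cells of $B_n$ whose row lies in $\mathcal{R}$, whose column lies in $\mathcal{C}$ and whose symbol lies in $\mathcal{E}$; each of these three sets has size $s$, but that does not force $|P|=s$. For example, in $B_5$ take $S=\{(3,3,1),(4,4,3)\}$, so $s=3$, $\mathcal{R}=\mathcal{C}=\{0,1,2\}$, $\mathcal{E}=\{0,2,4\}$: the cells $(0,0),(0,2),(1,1),(2,0),(2,2)$ all lie in $P$, so $|P|=5>s$ (and for $S=\emptyset$ one has $|P|=n^2$). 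With the correct bound the disjointness of the $R_\alpha$ only gives $\mu s\le |P|$, which yields nothing. Indeed your final conclusion --- that $\mu\ge 2$ forces $t=n$ --- is false and contradicts results in this very paper: Lemma 2.2 gives $\mu$ transversals intersecting stably in $0$ points for any $\mu\le n$, and Theorem 1.2 gives pairs intersecting in any $t\le n-3$. So the global cell count cannot work; the lemma is genuinely about $t>n-\mu$, not about all $t<n$.

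The setup itself (writing $T_\alpha=S\sqcup R_\alpha$, noting the $R_\alpha$ are pairwise disjoint by stability, and that the missing rows, columns and symbols are determined by $S$ alone) is sound and close to the paper's viewpoint; what is needed to finish is a per-row (equivalently per-column) pigeonhole rather than a total count, which is essentially the paper's proof. Since $s\ge 1$ there is a row $r\in\mathcal{R}$. Each $T_\alpha$ has exactly one cell in row $r$; that cell lies in $R_\alpha$, and by stability these $\mu$ cells are pairwise distinct. Moreover each of them lies in a column of $\mathcal{C}$, because a cell of $R_\alpha$ cannot share a column with a cell of $S\subseteq T_\alpha$. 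Hence $\mu\le|\mathcal{C}|=s=n-t$, i.e.\ $t\le n-\mu$, which is exactly the statement of the lemma. With that one repair your argument coincides with the paper's.
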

\begin{proof}
Suppose that there exists a set of $\mu$ transversals that intersect stably in $t$ points, for $t \geq 1$.
Let $C\subseteq [0,n-1]$ be the set of columns such that no pair of transversals of our set of $\mu$ transversal intersect in column $c \in C$.
If row $r$ has no pair of transversals intersect in row $r$, then the set $\{(r,c',e' ) \in T_{\alpha} \mid 1 \leq \alpha \leq \mu \text{ and } c' \in C\}$ has size $\mu$. 
But this implies $|C|\geq \mu$, which means there can be at most $n-\mu$ columns where the $\mu$ transversals meet. 
This implies the result.
\end{proof}

%
%

\subsection{Computer search}

We performed a computer search for $\mu$ transversals of $B_n$ when $n$ is relatively small, and $\mu=3,4$.

For $n \in \{5,7,9,11,13,15\}$, the program was able to exhaustively check the search space. For $n \in \{17,19,21,23,25,27,29,31\}$, we were only able to obtain partial results, as the search space was quite large. The results are summarized in Tables \ref{tab:tab3-trans} and \ref{tab:tab4-trans}.

\begin{table}[!h]
\begin{center}
  \begin{tabular}{ |c | c | c|  }
    \hline
$n$& $t$\\ \hline

5& 1
\\ \hline
7& 1,2
\\ \hline
9& 1,2,3,4,5,6
\\ \hline
11& 1,2,3,4,5,6,7
\\ \hline
13& 1,2,3,4,5,6,7,9
\\ \hline
 \hline

15& 1,2,3,4,5,6,7,8,9,10,11,12
\\ \hline
17& 1,2,3,4,5,6,7,8,9,10,11
\\ \hline
19& 1,2,3,4,5,6,7,8,9,10,11,12,13
\\ \hline
21& 3,4,5,6,7,8,9,10,11,12,13,14,15,16,18
\\ \hline
23& 4,5,6,7,8,9,10,11,12,13,14,15,16
\\ \hline
25& 6,7,8,9,10,11,12,13,14,15,16,17,18,21
\\ \hline
27& 7,8,9,10,11,12,13,14,15,16,17,18,19,20,21,22,24
\\ \hline
29& 9,10,11,12,13,14,15,16,17,18,19,20,21,22                                                                                                                                                                                                                                                                                                                                                                                                                                                                                                                                                                                                                                                                                                                                                                                                                                                                                                                                                                                                                                                                                                                                                                                                                                                                                                                                                                                                                                                                                                                                                                                                                                                                                                                                                                                                                                                                                                                                                                                                                                                                                                                                                                                                                                                                                                                                                                                                                                                                                                                                                                                                                                                                                                                                                                                                                                                                                                                                                                                                                                                                                                                                                                                                                                                                                                                                                                                                                                                                                                                                                                                                                                                                                                                                                                                                                                                                                                                                                                                                                                                                                                                                                                                                                                                                                                                                        
\\ \hline
31& 13,14,15,16,17,18,20
\\ \hline

\end{tabular}
\end{center}
\caption{There exists a set of $\mu=3$ transversals of $B_n$ with stable intersection size $t$.} \label{tab:tab3-trans}
\end{table}

\begin{table}[!h]
\begin{center}
  \begin{tabular}{ |c | c | c|  }
    \hline
$n$& $t$\\ \hline

7& 1
\\ \hline
9& 1,2
\\ \hline
11& 1,2,3
\\ \hline
13& 1,2,3,4
\\ \hline
\hline

15& 1,2,3,4,5,10
\\ \hline
17& 1,2,3,4,5,6
\\ \hline
19& 1,2,3,4,5,6,7
\\ \hline
21& 3,4,5,6,7,8
\\ \hline
23& 4,5,6,7,8,9
\\ \hline
25& 6,7,8,9,10
\\ \hline
27& 7,8,9,10,11
\\ \hline
29& 9,10,11,12
\\ \hline
31& 13
\\ \hline
\end{tabular}
\end{center}
\caption{There exists a set of $\mu=4$ transversals of $B_n$ with stable intersection size $t$.}  \label{tab:tab4-trans}
\end{table}

\subsection{Principal construction}

For this section, take $n$ to be a fixed odd integer. 
Define $B^n_{i,j}$ to be the $j \times j$ subsquare of $B_n$ at the intersection of rows and columns with indexes $i, i+1, \ldots, i+j-1$. 
We will write  $B_{i,j}$ instead of  $B^n_{i,j}$ when the value of $n$ is clear in the given context. 
The cells of such a subsquare are filled with symbols from $\{2 i, \ldots, 2  i+2  j-2\}$. 
We consider a partial transversal within the $j \times j$ subsquare of cells $B_{i,j}$ to be a set of $j$ triples $\{(r_k,c_k,e_k) \mid r_k-i,c_k-i \in [0,j-1] \text{ and } 0 \leq k \leq j-1\}$ such that $| \{r_k \mid 0 \leq k \leq j-1\}|=|\{c_k \mid 0 \leq k \leq j-1\}|=|\{e_k \mid 0 \leq k \leq j-1\}| = j$. 
We further consider a set of subsquares $B_{i,j}$ such that the subsquares  partition the rows and columns of $B_n$. 
Then finding certain sets of $\mu$ partial transversals within each of these $B_{i,j}$ that only use certain symbols will amount to a set of $\mu$ transversals of $B_n$.

Fix $\mu$ an integer and $b$ an odd integer with $3 \leq b \leq n/3$. 
For a given $\bar{d}$ and $t$, we write $t \in \Omega^{b}_{\mu}(b+\bar{d})$ if there exists a set of $\mu$ partial transversals of $B_{0,b+\bar{d}}$ within $B_n$ that intersect stably in $t$ points, and only use symbols from $\{({b-1})/{2}, \ldots, 3(b-1)/2+2\bar{d}\} \setminus  \{b+2j \mid 0 \leq j < \bar{d}\}$. 
Notice that $B_{i,j}$ is just a relabeling of the symbols of $B_{0,j}$, and so the existence of a set of $\mu$ partial transversals within $B_{0,j}$ is equivalent to the existence of a set of $\mu$ partial transversals within $B_{i,j}$.

Take $I$ and $d$ to be the unique integers with $I \geq 1$, $d \in [0,b-1]$, and $n = 2Ib+b+2d$.  
We consider three types of subsquares; large subsquares $B_{0,b+d}$, small subsquares $B_{(n+b)/2,d}$ and base subsquares $B_{ib+d,b}$ and $B_{(I+i)b+2d,b}$ for $1 \leq i \leq I$.
Figure \ref {fig:layout} shows the layout of the subsquares. 
The symbols that fill the cells of the partial transversal from each of these subsquares are restricted. 
In particular, the base subsquares $B_{ib+d,b}$ use the symbols $\{2(ib+d) + (b-1)/2, \ldots, 2(ib+d)+3(b-1)/2  \}$ and 
the base subsquares $B_{(I+i)b+2d,b}$ use the symbols $\{2(Ib+ib+2d)+(b-1)/2, \ldots, 2(Ib+ib+2d)+3(b-1)/2  \}$ for $1 \leq i \leq I$, 
the large subsquare $B_{0,b+d}$ use the symbols $\{({b-1})/{2}, \ldots, 3(b-1)/2 +2d\} \setminus  \{b+2j \mid 0 \leq j < d\}$,  
 and the small subsquare $B_{(n+b)/2,d}$ use the symbols $\{b+2j \mid 0 \leq j < d\}$.

These symbols have been chosen so that the partial transversal of one of the subsquares does not share any symbols in common with the partial transversal of any other subsquare. 
We demonstrate the interleaving that occurs for the base subsquares in Figure \ref{fig:interleave}.

\begin{figure}
\begin{center}
 \scalebox{.7}{
\Large
\begin{tikzpicture}[scale=0.86]

\draw [decorate,decoration={brace,amplitude=10pt},xshift=0pt,yshift=0pt]
 (0,14.1) -- (3,14.1)  node [black,midway,xshift=-0.6cm]{};
\node at (1.5,15) {b+d};

\draw [decorate,decoration={brace,amplitude=10pt},xshift=0pt,yshift=0pt]
 (3,14.1) -- (5,14.1)  node [black,midway,xshift=-0.6cm]{};
\node at (4,15) {b};

\draw [decorate,decoration={brace,amplitude=10pt},xshift=0pt,yshift=0pt]
 (6,14.1) -- (8,14.1)  node [black,midway,xshift=-0.6cm]{};
\node at (7,15) {b};

\draw [decorate,decoration={brace,amplitude=6pt},xshift=0pt,yshift=0pt]
 (8,14.1) -- (9,14.1)  node [black,midway,xshift=-0.6cm]{};
\node at (8.5,15) {d};

\draw [decorate,decoration={brace,amplitude=10pt},xshift=0pt,yshift=0pt]
 (9,14.1) -- (11,14.1)  node [black,midway,xshift=-0.6cm]{};
\node at (10,15) {b};

\draw [decorate,decoration={brace,amplitude=10pt},xshift=0pt,yshift=0pt]
 (12,14.1) -- (14,14.1)  node [black,midway,xshift=-0.6cm]{};
\node at (13,15) {b};


\draw (0,0) --(0,14) -- (14,14) -- (14,0) -- (0,0);
\draw (0,14) --(0,11) -- (3,11) -- (3,14) -- (0,14);
\node at (1.5,12.5) {large};
\draw (3,11) --(3,9) -- (5,9) -- (5,11) -- (3,11);
\node at (4,10) {base};
\node at (4,9.5) {$_1$};
\draw [densely dashed] (5.1,8.9) -- (5.9,8.1);
\draw (6,8) --(6,6) -- (8,6) -- (8,8) -- (6,8);
\node at (7,7) {base};
\node at (7,6.5) {$_I$};
\draw (9,5) --(9,6) -- (8,6) -- (8,5) -- (9,5);
\node at (10,7) {small};
\draw (10,6.7) -- (8.5,5.5);
\draw [densely dashed] (11.1,2.9) -- (11.9,2.1);
\draw (9,5) --(9,3) -- (11,3) -- (11,5) -- (9,5);
\node at (10,4) {base};
\node at (10,3.5) {$_{I+1}$};
\draw (12,2) --(12,0) -- (14,0) -- (14,2) -- (12,2);
\node at (13,1) {base};
\node at (13,0.5) {$_{2I}$};
\end{tikzpicture}
}
\caption{The positioning of the subsquares. By taking the union of partial transversals of $B_n$ in these subsquares, we find a transversal of $B_n$.} \label{fig:layout}
\end{center}

\end{figure}
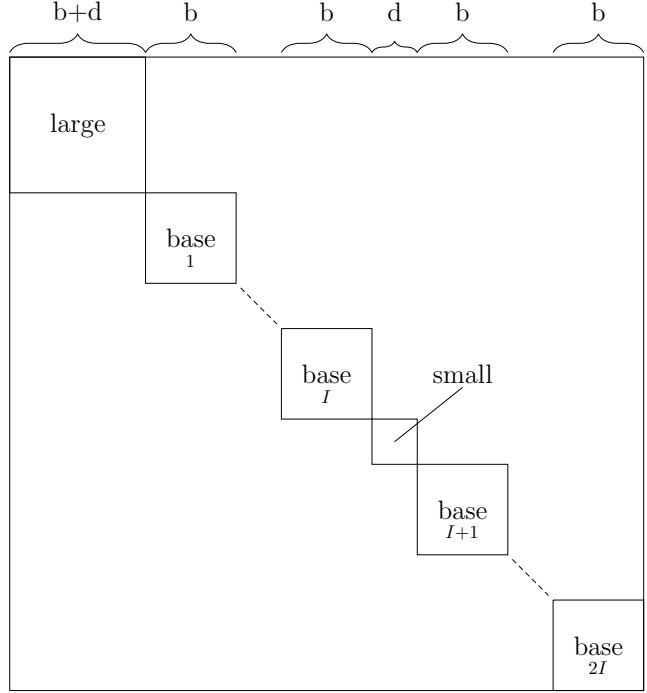

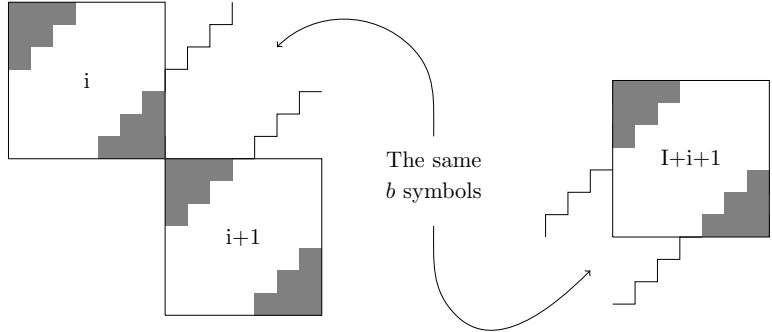
\begin{figure}
\begin{center}

 \scalebox{.85}{

\begin{tikzpicture}[scale=0.35]

\path [fill=gray] (4,0) rectangle (7,1);
\path [fill=gray] (5,0.9) rectangle (7,2);
\path [fill=gray] (6,1.9) rectangle (7,3);

\path [fill=gray] (0,7) rectangle (3,6);
\path [fill=gray] (0,6.1) rectangle (2,5);
\path [fill=gray] (0,5.1) rectangle (1,4);

\draw (0,0) --(0,7) -- (7,7) -- (7,0) -- (0,0);
\node at (3.5,3.5) {i+1};

\path [fill=gray] (-3,7) rectangle (0,8);
\path [fill=gray] (-2,7.9) rectangle (0,9);
\path [fill=gray] (-1,8.9) rectangle (0,10);

\path [fill=gray] (-7,14) rectangle (-4,13);
\path [fill=gray] (-7,13.1) rectangle (-5,12);
\path [fill=gray] (-7,12.1) rectangle (-6,11);

\draw (-7,7) --(-7,14) -- (0,14) -- (0,7) -- (-7,7);
\node at (-3.5,10.5) {i};

\draw (0,10) -- (0,11) -- (1,11) -- (1,12) -- (2,12) -- (2,13) -- (3,13) -- (3,14);
\draw (3,7) -- (4,7) -- (4,8) -- (5,8) -- (5,9) -- (6,9) -- (6,10) -- (7,10);

\draw[->] (12,4) to [out=-90,in=-45+180]  
                 (13,0) to [out=135+180,in=45+180]  
	      (19,2) ;
\draw[->] (12,8) to [out=90,in=-45]  
                 (11,12) to [out=135,in=45]  
	      (5,12) ;

\node at (12,7) {The same};
\node at (12,5.5) {$b$ symbols};

\path [fill=gray] (24,3.5) rectangle (27,4.5);
\path [fill=gray] (25,4.4) rectangle (27,5.5);
\path [fill=gray] (26,5.4) rectangle (27,6.5);

\path [fill=gray] (20,10.5) rectangle (23,9.5);
\path [fill=gray] (20,9.6) rectangle (22,8.5);
\path [fill=gray] (20,8.6) rectangle (21,7.5);

\draw (20,3.5) --(20,10.5) -- (27,10.5) -- (27,3.5) -- (20,3.5);
\node at (23.5,7) {I+i+1};

\draw  (20,6.5)--(19,6.5) -- (19,5.5)--(18,5.5)--(18,4.5)--(17,4.5) --(17,3.5);
\draw (24,3.5)--(23,3.5)--(23,2.5)--(22,2.5)--(22,1.5)--(21,1.5)--(21,0.5) -- (20,0.5);

\end{tikzpicture}

}

\end{center}
\caption{We choose partial transversal such that the symbols not used between the $i$th and $(i+1)$th base subsquares are used in the $(I+i+1)$th base subsquare. The darkened cells represent those cells that we do not allow in any partial transversal.}  \label{fig:interleave}
\end{figure}

\begin{theorem} \label{mainTheorem} 
Let $n,b$ be odd integers, $3 \leq b \leq n/3$, and $\mu$ an integer with $\mu \geq 2$. 
 Let $I \geq 1$ and $d \in [0,b-1]$ be the unique integers such that $n=2 b I + b+2d$. 
There exists $\mu$ transversals of $B_n$ that intersect stably in $t$ points with $t=d+ \sum_{i=0}^{2I} t_i$, where $t_0 \in {\Omega}^{b}_{\mu}(b+d)$ and $t_i \in {\Omega}^{b}_{\mu}(b)$, for $1 \leq i \leq 2I$.
\end{theorem}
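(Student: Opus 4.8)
The plan is to realize the claimed transversals of $B_n$ by gluing together partial transversals of the three types of subsquares laid out in Figure \ref{fig:layout}. First I would fix the partition of the rows and columns of $B_n$: the large block occupies rows/columns $[0,b+d-1]$, the first $I$ base blocks occupy $B_{ib+d,b}$ for $1 \le i \le I$, the small block occupies $B_{(n+b)/2,d}$, and the last $I$ base blocks occupy $B_{(I+i)b+2d,b}$ for $1 \le i \le I$. A direct index count (using $n = 2bI+b+2d$) shows these $2I+2$ intervals tile $[0,n-1]$ exactly once, so any choice of one partial transversal per block yields, by concatenation, a set of $n$ cells meeting every row and every column of $B_n$ precisely once.

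The second step is the symbol-disjointness bookkeeping, which is the heart of the construction. By definition of $\Omega^b_\mu$, the partial transversal chosen in the large block $B_{0,b+d}$ uses only symbols in $\{(b-1)/2,\dots,3(b-1)/2+2d\}\setminus\{b+2j : 0\le j<d\}$; after relabeling, the $i$th base block $B_{ib+d,b}$ uses exactly $\{2(ib+d)+(b-1)/2,\dots,2(ib+d)+3(b-1)/2\}$, the $(I+i)$th base block $B_{(I+i)b+2d,b}$ uses $\{2(Ib+ib+2d)+(b-1)/2,\dots,2(Ib+ib+2d)+3(b-1)/2\}$, and the small block $B_{(n+b)/2,d}$ uses $\{b+2j : 0\le j<d\}$. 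I would verify that these $2I+2$ symbol sets are pairwise disjoint and that their union is all of $[0,n-1]$ (counting: $(b+2d) + (d) + 2I\cdot b = n$ symbols, where the large block contributes $b+2d$, the small block $d$, each base block $b$). The interleaving depicted in Figure \ref{fig:interleave} — that the $b$ symbols skipped between consecutive base blocks on the first "row" of base squares reappear in the matching base block on the second "row" — is exactly what makes the union exhaust $[0,n-1]$; this is the step I expect to require the most careful modular arithmetic, since one must track which symbols modulo $n$ each $b\times b$ subsquare actually contains given its offset, and confirm the skipped-symbol sets line up. Since every symbol of $[0,n-1]$ is used exactly once across the blocks, the concatenated set of $n$ cells has each symbol once, hence is a genuine transversal of $B_n$.

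The third step is to assemble the stable intersection. For each $\alpha \in [1,\mu]$ let $T_\alpha$ be the union over all $2I+2$ blocks of the $\alpha$th partial transversal from that block's chosen stable family. Because the blocks occupy disjoint sets of rows/columns, $T_\alpha \cap T_\beta$ is the disjoint union, over blocks, of the pairwise intersections of the partial transversals inside that block. By hypothesis each block's family intersects stably: inside the large block the common intersection $S_0$ has size $t_0 \in \Omega^b_\mu(b+d)$ and the pairwise intersections equal $S_0$; inside each base block $i$ the common intersection $S_i$ has size $t_i \in \Omega^b_\mu(b)$ and the pairwise intersections equal $S_i$; inside the small block the $\mu$ partial transversals can be taken identical (the unique partial transversal being whatever $\Omega$-witness is needed, or a fixed diagonal), contributing $d$ common cells. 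Setting $S = \bigcup_i S_i$ together with the small block's cells, one gets $T_\alpha \cap T_\beta = S$ for every pair and $\bigcap_\alpha T_\alpha = S$, so the $\mu$ transversals intersect stably in $|S| = d + \sum_{i=0}^{2I} t_i$ points, as claimed. The only genuine obstacle is the modular symbol-tracking in step two; steps one and three are essentially bookkeeping, using that disjoint row/column supports make "intersect stably" additive over blocks.
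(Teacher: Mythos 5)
Your proposal follows essentially the same route as the paper's Construction \ref{mainConstruction} and its proof: tile the rows and columns of $B_n$ by one large, one small and $2I$ base subsquares, take the $\alpha$th partial transversal of each block's stable family, and use the disjointness of the blocks' row/column supports to make both ``diagonal'' and ``stable intersection'' additive over blocks, with the symbol bookkeeping carrying the real content. Two remarks. First, your symbol count has a slip: the large block's partial transversal has $b+d$ cells and hence contributes $b+d$ symbols (the interval $\{(b-1)/2,\ldots,3(b-1)/2+2d\}$ has $b+2d$ elements, but $d$ of them are excluded and given to the small block), so the tally should read $(b+d)+d+2Ib=n$ rather than $(b+2d)+d+2Ib$, which would exceed $n$ by $d$. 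Second, the modular verification you defer as ``the step requiring the most careful modular arithmetic'' is exactly what the paper's proof carries out: it pairs the base block $B_{bi+d,b}$ with $B_{b(I+i)+2d,b}$ and checks that, modulo $n$, together they use precisely the $2b$ consecutive symbols $\{2bi+2d-(b+1)/2,\ldots,2bi+2d+3(b-1)/2\}$, so the $2I$ base blocks cover $\{2d+3(b-1)/2+1,\ldots,2bI+2d+3(b-1)/2\}$, which together with the large and small blocks' symbols $\{(b-1)/2,\ldots,2d+3(b-1)/2\}$ exhausts all residues modulo $n$. With that computation inserted and the count corrected, your argument coincides with the paper's proof; the mechanism you identify (the skipped symbols reappearing in the matching second-row base block) is the right one.
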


We provide the following construction, followed by a proof that demonstrates that the construction yields Theorem \ref{mainTheorem}.

\begin{construction}
\label{mainConstruction}
Take $\mu \geq 2$ an integer and $n,b$ odd integers, $3 \leq b \leq n/3$
 Let $I \geq 1$ and $d \in [0,b-1]$ be the unique integers such that $n=2 b I + b+2d$. 

We will construct $\mu$ subsets of $B_n$, $T_{1}, \ldots, T_{\mu}$ by finding partial transversals selected from a large subsquare $B_{0,b+d}$, a small subsquare $B_{b(I+1)+d,d} = B_{({n+b})/{2},d}$, and base subsquares $B_{bi+d,b}$ and $B_{b(I+i)+2d,b}$ for $1 \leq i \leq I$.

For the large subsquare, as $t_0 \in {\Omega}^{b}_{\mu}(b+d)$ there exists a set of $\mu$ partial transversals $P_1^L, \ldots, P_{\mu}^L$ within $B_{0,b+d}$ that intersects stably in $t_0$ points and using each symbol of $\{(b-1)/2, \ldots, 2d +3(b-1)/2\} \setminus \{b+2 d' \mid 0 \leq d' < d\}$ precisely once per partial transversal. 
We place the cells of $P_{\beta}^L$ into $T_{\beta}$, $1 \leq {\beta} \leq \mu$.

For the small subsquare, define a set of $\mu$ partial transversals $P_1^S, \ldots, P_{\mu}^S$ within $B_{b(I+1)+d,d}$ that intersect stably in $d$ points by placing cells $(r,r,2r)$ with $r=b(I+1)+d+d'$ into every partial transversal $P_\beta^S$, $1\leq \beta \leq \mu$ and $0 \leq d' <d$, so that each of the $\mu$ partial transversals are identical. We place the cells of $P_\beta^S$ into $T_\beta$, $1\leq \beta \leq \mu$.

For the first set of base subsquares, $B_{bi+d,b}$ with $1 \leq i \leq I$, as $t_i \in {\Omega}^{b}_{\mu}(b)$ there exists a set of $\mu$ partial transversals $P_1^i, \ldots, P_\mu^i$ of $B_{0,b}$ that intersect stably in $t_i$ points and using each symbol of $\{(b-1)/2, \ldots, 3(b-1)/2\}$ precisely once. For every $(r,c,e) \in P_{\beta}^i$, place the cells $(r+a,c+a,e+2a)$ with $a=bi+d$  into $T_{\beta}$, $1 \leq {\beta} \leq \mu$. The cells that were just filled are in the subsquare $B_{bi+d,b}$.

For the second set of base subsquares, $B_{b(I+i)+2d,b}$ with $1 \leq i \leq I$, as $t_{I+i} \in {\Omega}^{b}_{\mu}(b)$ there exists a set of $\mu$ partial transversals $P_1^{I+i}, \ldots, P_\mu^{I+i}$ of $B_{0,b}$ that intersect stably in $t_{I+i}$ points and using each symbol of $\{(b-1)/2, \ldots, 3(b-1)/2\}$ precisely once. For every $(r,c,e) \in P_{\beta}^{I+i}$, place the cells $(r+a,c+a,e+2a)$ with $a=b(I+i)+2d$  into  $T_{\beta}$, $1 \leq {\beta} \leq \mu$. The cells that were just filled are in the subsquare $B_{b(I+i)+2d,b}$.

\end{construction}

\begin{proof}
We begin by showing that $T_1, \ldots, T_\mu$ from Construction \ref{mainConstruction} are each diagonals.  
Consider any  $T \in \{T_1, \ldots, T_\mu\}$. 
As $T$ is the union of partial transversals of subsquares, each of which share no common row or common, clearly $T$ is a selection of $n$ cells of $L$ using each row (resp.\ column) once, and so $T$ is a diagonal of $B_n$.

We will proceed to show that each diagonal $T\in \{T_1, \ldots, T_\mu\}$ is a transversal, and that they intersect stably in $d+ \sum_{i=0}^{2I} t_i$ points.
The construction placed $2b$ filled cells from the two base subsquares $B_{bi+d,b}$ and  $B_{b(I+i)+2d,b}$ into $T$, for each fixed $i$, $1 \leq i \leq I$.
This consisted of precisely one filled cell for each symbol of $ \{2bi+2d -(b+1)/2, \ldots, 2bi+2d+3(b-1)/2  \}$. 

Then collectively the $2I$ base subsquares were used to fill $2bI$ cells into $T$, placing precisely one filled cell for each symbol of $ \{2d +3(b-1)/2+1, \ldots, 2bI+2d+3(b-1)/2  \}$.

During the construction, $(b+d)+d$ filled cells were placed into $T$ from the large subsquare $B_{0,b+d}$ and the small subsquare $B_{b(I+1)+d,d}$, which had one filled cell for each symbol of $ \{(b-1)/2, \ldots, 2d+3(b-1)/2\}$. 

Combining the statements for the $2  I$ base subsquares and the large and small subsquare, each symbol of $\{(b-1)/2, \ldots, 2bI+2d+3(b-1)/2  \}=\{0, \ldots, 2bI + b +2d-1\}$ appears in the diagonal $T$ precisely once, after recalling that each symbol is taken modulo $2bI  + b+ 2d$ and noting that $2bI+2d +3(b-1)/2 = (2bI + b + 2d) + (b-1)/2-1 \equiv ({b-1})/{2} -1 \pmod{n}$. 

This shows that $T$ is indeed a transversal, and so the construction has indeed formed $\mu$ transversals. 
Now we need to show that the $\mu$ transversals intersect stably in $d+ \sum_{i=0}^{2I} t_i$ points. 
Suppose the $\mu$ partial transversals we chose for the large subsquare intersect stably in the set $S_0$, the $\mu$ partial transversals we chose for the base subsquare  $B_{bi+d,b}$  intersect stably in the set $S_i$, and the $\mu$ partial transversals we chose for the base subsquare $B_{b(I+i)+2d,b}$ intersect stably in the set $S_{I+i}$, for $1 \leq i \leq I$. 
Clearly the $\mu$ partial transversals we chose for the small subsquare intersect stably in the points $S_{-1} = \{(r,r,2r) \mid r=b(I+1)+d+d' \text{ and } 1 \leq d' \leq d \}$. 
The size of $S_i$ is $|S_i|=t_i$, for $0 \leq i \leq 2I$, and $|S_{-1}|=d$. 
The  $\mu$ transversals then clearly intersect stably in the $d+\sum_{i=0}^{2I}t_i$ points  $\bigcup_{i=-1}^{2I} S_i$. 
\end{proof}

\begin{example}
 We consider the case when $\mu=2$, $n=17$, $b=5$, $d=1$, $I=1$, $t_0=1$, $t_1=1$, $t_2=0$. Note that $n=2b I + b+2d=5\cdot 2 \cdot 1 + 5 + 2\cdot 1=17$.

For this example, we will represent the first transversal of $B_{i,j}$ or $B_n$ by underlining those entries, and the second transversal by adding a superscripted star. The intersection of the two (partial) transversals are those entries that are both underlined and starred. 

For the small subsquare, we require $\mu=2$ transversals of the small subsquare $B_{(I+1)b+d,d}=B_{11,1}$ that intersect stably in $d=1$ points. These transversals are simply chosen as there is only one cell in $B_{11,1}$. 

For the large subsquare, we require $\mu=2$ transversals of the large subsquare $B_{0,b+d}=B_{0,6}$ that intersect stably in $t_0=1$ points using symbols $\{(b-1)/2, \ldots, 2d +3(b-1)/2\} \setminus \{b+2 d' \mid 0 \leq d' < d\}   =   \{2, 3,4,6,7,8\}$, for example:

 \begin{center}
  \begin{tabular}{ |c | c | c| c | c | c|  }
    \hline
$0$&$1$&$2$&$3^*$&$\underline{4}$&$5$ \\
          \hline
$1$&$2$&$\underline{3}$&$4$&$5$&$6^*$ \\
          \hline
${\underline{2}}^*$&$3$&$4$&$5$&$6$&$7$ \\
          \hline
$3$&$4^*$&$5$&$6$&$7$&$\underline{8}$ \\
          \hline
$4$&$5$&$6$&$\underline{7}$&$8^*$&$9$ \\
          \hline
$5$&$\underline{6}$&$7^*$&$8$&$9$&$10$ \\
          \hline
  \end{tabular}
 \end{center}

For the first set of base subsquares (in this case the set contains only one subsquare) we require $\mu=2$ transversals of the base subsquare $B_{0,b}=B_{0,5}$ that intersect stably in  $t_1=1$ points and using symbols $\{(b-1)/2, \ldots, 3(b-1)/2\} =   \{2, 3,4,5,6\}$, for example:

 \begin{center}
  \begin{tabular}{ |c | c | c| c | c | c|  }
    \hline
$0$&$1$&$2$&$3^*$&$\underline{4}$\\
          \hline
$1$&$2$&$\underline{3}$&$4$&$5^*$\\
          \hline
${\underline{2}^*}$&$3$&$4$&$5$&$6$\\
          \hline
$3$&$4^*$&$5$&$\underline{6}$&$7$\\
          \hline
$4$&$\underline{5}$&$6^*$&$7$&$8$\\
          \hline
  \end{tabular}
\end{center}

For the second set of base subsquares (in this case the set contains only one subsquare), we require $\mu=2$ transversals of the base subsquare $B_{0,b}=B_{0,5}$ that intersect stably in  $t_2=0$ points, for example:

 \begin{center}
  \begin{tabular}{ |c | c | c| c | c | c|  }
    \hline
$0$&$1$&$2$&$3^*$&$\underline{4}$\\
          \hline
$1$&$2^*$&$\underline{3}$&$4$&$5$\\
          \hline
${\underline{2}}$&$3$&$4$&$5$&$6^*$\\
          \hline
$3$&$4$&$5^*$&$\underline{6}$&$7$\\
          \hline
$4^*$&$\underline{5}$&$6$&$7$&$8$\\
          \hline
  \end{tabular}
\end{center}

Then we can obtain $\mu=2$ transversals of size $n=17$ and stable intersection size $d+t_0 + t_1 + t_2 =3$ as (where we omit those entries not relevant to our construction):

\scalebox{0.75}{
  \begin{tabular}{ c |c | c | c| c | c | c|c | c | c| c | c | c|c | c | c| c | c |   }
  \multicolumn{1}{c}{}& \multicolumn{1}{c}{0}&\multicolumn{1}{c}{1}& \multicolumn{1}{c}{2}& \multicolumn{1}{c}{3}& \multicolumn{1}{c}{4}& \multicolumn{1}{c}{5}& \multicolumn{1}{c}{6}& \multicolumn{1}{c}{7}& \multicolumn{1}{c}{8}& \multicolumn{1}{c}{9}& \multicolumn{1}{c}{10}& \multicolumn{1}{c}{11}& \multicolumn{1}{c}{12}& \multicolumn{1}{c}{13}& \multicolumn{1}{c}{14}& \multicolumn{1}{c}{15}& \multicolumn{1}{c}{16}\\
    \cline{2-18}
0&  $0$  &  $1$  &  $2$  &  $3^*$  &  $\underline{4}$  &  $5$ &&&&&&&&&&&\\
          \cline{2-18}
1&  $1$  &  $2$  &  $\underline{3}$  &  $4$  &  $5$  &  $6^*$ &&&&&&&&&&& \\
          \cline{2-18}
2&  ${\underline{2^*}}$  &  $3$  &  $4$  &  $5$  &  $6$  &  $7$ &&&&&&&&&&& \\
          \cline{2-18}
3&  $3$  &  $4^*$  &  $5$  &  $6$  &  $7$  &  $\underline{8} $  &&&&&&&&&&& \\
          \cline{2-18}
4&  $4$  &  $5$  &  $6$  &  $\underline{7}$  &  $8^*$  &  $9 $  &&&&&&&&&&& \\
          \cline{2-18}
5&  $5$  &  $\underline{6}$  &  $7^*$  &  $8$  &  $9$  &  $10 $  &&&&&&&&&&& \\
          \cline{2-18}

6&&&&&&&  $12$  &  $13$  &  $14$  &  $15^*$  &  $\underline{16}$  &&&&&&\\
          \cline{2-18}
7&&&&&&&  $13$  &  $14$  &  $\underline{15}$  &  $16$  &  $0^*$  &&&&&&\\
          \cline{2-18}
8&&&&&&&  ${\underline{14^*}}$  &  $15$  &  $16$  &  $0$  &  $1$  &&&&&&\\
          \cline{2-18}
9&&&&&&&  $15$  &  $16^*$  &  $0$  &  $\underline{1}$  &  $2$  &&&&&&\\
          \cline{2-18}
10&&&&&&&  $16$  &  $\underline{0}$  &  $1^*$  &  $2$  &  $3$  &&&&&&\\
          \cline{2-18}

11&&&&&&&&&&&&  ${\underline{5^*}}$  &&&&&\\

         \cline{2-18}

12&&&&&&&&&&&&&  $7$  &  $8$  &  $9$  &  $10^*$  &  $\underline{11}$\\
          \cline{2-18}
13&&&&&&&&&&&&&  $8$  &  $9^*$  &  $\underline{10}$  &  $11$  &  $12$\\
          \cline{2-18}
14&&&&&&&&&&&&&  $\underline{9}$  &  $10$  &  $11$  &  $12$  &  $13^*$\\
          \cline{2-18}
15&&&&&&&&&&&&&  $10$  &  $11$  &  $12^*$  &  $\underline{13}$  &  $14$\\
          \cline{2-18}
16&&&&&&&&&&&&&  $11^*$  &  $\underline{12}$  &  $13$  &  $14$  &  $15$\\
         \cline{2-18}
  \end{tabular}
}

\

This concludes the example.
\end{example}

\section{Application to $\mu=3,4$}

Our approach to finding $3$ (resp.\ $4$) transversals of $B_n$ that intersect stably is to find $3$ (resp.\ $4$) partial transversals of $B_{i,j}$ that intersect stably for certain values of $i$ and $j$, and compose these into transversals of $B_n$.
 We will use Theorem \ref{mainTheorem} with base sizes of $b=9,11,15$.
These base sizes have been chosen based upon the results of a computational search for partial transversals of base and large subsquares. 

The appendix includes tables that contains a set of either three or four rows, corresponding to $\mu=3$ and $\mu=4$ respectively, each row containing $b+d$ symbols. 
For the $r$th row, denote the $i$th symbol of the list in this row as $a^r_i$, for $1 \leq r \leq \mu$ and $1 \leq i \leq b+d$.  
The cells $(i,a^r_i)$, $1 \leq i \leq b+d$, form a partial transversal of $B_{0,b+d}$. 
The three (resp.\ four) rows give three (resp.\ four) partial transversals of $B_{0,b+d}$, each partial transversal having $t$ cells that are common amongst all three (resp.\ four) partial transversals, and $b+d-t$ cells which do not appear in the other partial transversals. 
We call this representation \emph{reduced form}.

We take the addition and scalar multiplication of finite sets to be:

\[
A+B = \{a+b \mid a \in A, b \in B\}
\]
\[
kA = \{\sum_{i=1}^k a_i \mid a_i \in A\}
\]

\begin{lemma}
\label{lemmaSet}
Let $j,a,b$ be positive integers with $1 \leq a<b$. We have
$j ( [0,a]\cup \{b\}) = [0,j b] \setminus \bigcup_{i=1}^{\lfloor ({b-2})/{a}\rfloor} [jb-ib +i a+1, jb-ib+b  -1]$.
\end{lemma}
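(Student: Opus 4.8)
The plan is to analyze the set $j([0,a]\cup\{b\})$ directly as a sumset: an element of this set is a sum $s_1+\cdots+s_j$ where each $s_\ell$ lies in $[0,a]\cup\{b\}$. Write each such summand as either an element of $[0,a]$ or as $b$, and let $k$ be the number of summands equal to $b$, so $0\le k\le j$. Then the sum equals $kb+m$ where $m$ ranges over $(j-k)\cdot[0,a]=[0,(j-k)a]$ (using that $[0,a]$ is an interval containing $0$, so its $(j-k)$-fold sumset is the full interval $[0,(j-k)a]$). Hence
\[
j([0,a]\cup\{b\})=\bigcup_{k=0}^{j}\bigl[kb,\;kb+(j-k)a\bigr].
\]
So the statement reduces to showing that the union on the right equals $[0,jb]$ with the stated intervals removed. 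First I would note that the rightmost endpoint is achieved at $k=j$, giving $jb$, and the leftmost at $k=0$, giving $0$, so the union is contained in $[0,jb]$ and contains $0$ and $jb$.

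Next I would determine exactly which integers in $[0,jb]$ are \emph{not} covered. Since $b>a$, consecutive blocks $[kb,kb+(j-k)a]$ and $[(k+1)b,(k+1)b+(j-k-1)a]$ may leave a gap: the gap after block $k$ is the interval $[kb+(j-k)a+1,\;(k+1)b-1]$, which is nonempty precisely when $kb+(j-k)a+1\le (k+1)b-1$, i.e. $(j-k)a\le b-2$, i.e. $j-k\le (b-2)/a$. Reindexing by $i:=j-k$ (so $i$ runs over $1\le i\le \lfloor (b-2)/a\rfloor$, and $k=j-i$), the uncovered gap becomes
\[
[(j-i)b+ia+1,\;(j-i+1)b-1]=[jb-ib+ia+1,\;jb-ib+b-1],
\]
which is exactly the $i$th interval removed in the statement. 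I should also check that these gaps are genuinely uncovered, i.e. that no block $[kb,kb+(j-k)a]$ with a different $k$ reaches into them; this follows because the left endpoints $kb$ are strictly increasing in $k$ and the blocks are "monotone" in the sense that once we pass $kb+(j-k)a$ the next available block starts only at $(k+1)b$. A clean way to phrase this: for $k'<k$, block $k'$ ends at $k'b+(j-k')a$, and one checks $k'b+(j-k')a<kb$ would have to fail for coverage — actually block $k'$ can extend past $kb$; but then it also covers the gap after block $k-1$ only if... — so care is needed here.

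The main obstacle, and the step I would spend the most effort on, is precisely this overlap bookkeeping: the blocks are not disjoint in general (for small $a$ a single block $[kb,kb+(j-k)a]$ with small $k$ can be very long and swallow several later blocks and gaps), so I cannot simply say "the union is $[0,jb]$ minus the gaps between consecutive blocks." I would handle this by a direct membership argument instead: fix $x\in[0,jb]$ and show $x$ is covered if and only if $x$ avoids all the removed intervals. Given $x$, let $k_0=\lfloor x/b\rfloor$ so that $k_0 b\le x<(k_0+1)b$ (and $x\le jb$ forces $k_0\le j$, with the boundary case $x=jb$ handled separately). Then $x$ lies in block $k_0$ iff $x-k_0b\le (j-k_0)a$; if instead $x-k_0b>(j-k_0)a$, then $x$ lies strictly inside the gap $[k_0b+(j-k_0)a+1,(k_0+1)b-1]$, which is one of the removed intervals precisely when $j-k_0\le\lfloor(b-2)/a\rfloor$ — and when $j-k_0>\lfloor(b-2)/a\rfloor$ the "gap" is empty, so this case cannot occur. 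Conversely, every removed interval consists of integers $x$ with $\lfloor x/b\rfloor=k_0=j-i$ and $x-k_0b>(j-k_0)a$, hence not in block $k_0$, and not in any other block $k$ either since blocks with $k\ne k_0$ satisfy either $kb+(j-k)a<k_0b$ (for $k<k_0$, using $(j-k)a\le$ ... — this inequality is what must be verified) or $kb>x$ (for $k>k_0$). I would verify the one delicate inequality $kb+(j-k)a<k_0 b$ for $k<k_0$ under the hypothesis that $x$ is in the $i$th removed interval: here $k_0=j-i$ with $i\le\lfloor(b-2)/a\rfloor$, and for $k\le k_0-1$ we need $(k_0-k)b>(k_0-k)a+\,(\text{slack})$, which holds since $b>a$ and... — this is the routine computation I would not grind through here but which closes the argument.

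Assembling these two directions gives that an integer in $[0,jb]$ is in $j([0,a]\cup\{b\})$ iff it is not in any of the removed intervals, which is the claimed identity.
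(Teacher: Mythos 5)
Your proposal is correct and follows essentially the same route as the paper: write the sumset as $\bigcup_{k=0}^{j}[kb,\,kb+(j-k)a]$ (the paper indexes by $i=j-k$), observe that the uncovered points of $[0,jb]$ are exactly the gaps between consecutive blocks, and note that the gap after block $k$ is nonempty precisely when $j-k\le\lfloor (b-2)/a\rfloor$, which yields the stated removed intervals. The one step you left unfinished is immediate and is the correct replacement for your momentarily stated (and too strong) inequality $kb+(j-k)a<k_0b$: since $b>a$, the right endpoints $kb+(j-k)a=ja+k(b-a)$ strictly increase in $k$, so for $k<k_0$ one has $kb+(j-k)a\le k_0b+(j-k_0)a<x$ for any $x$ in the gap after block $k_0$, hence no earlier block reaches into a gap — a point the paper's own proof treats as implicit.
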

\begin{proof}
From definition, $j ([0,a] \cup \{b\}) = \{\sum_{i=1}^j a_i \mid a_i \in [0,a] \cup \{b\}\} = \bigcup_{i=0}^{j} [i \cdot b, i \cdot b + (j-i)a]= \bigcup_{i=0}^{j} [(j-i)b, (j-i)b + i a]$. 
Then any value $t\in [0,j b]$ with $t \notin j ([0,a] \cup \{b\})$ must be between the two intervals $[(j-i')b, (j-i')b + i' a]$ and $[(j-i'+1)b, (j-i'+1)b + (i'-1) a]$ for some $1 \leq i' \leq j$, and hence $t \in [(j-i') b + i' a+1, (j-i'+1) b-1]$. 
This proves the result, once we note that $[(j-i')b + i' a+1, (j-i'+1) b-1]$ is non-empty only when $i'a+1 \leq b-1$, and so $i' \leq (b-2)/a$.
%
\end{proof}

\subsection{Existence of partial transversals in subsquares}

It is important to note that $ {\Omega}_4^{b}(b+d) \subseteq {\Omega}_3^{b}(b+d)$. Also, if there is at least one partial transversal of $B_{0,b+d}$ using symbols $\{(b-1)/2, \ldots, 2d +3(b-1)/2\} \setminus \{b+2 d' \mid 0 \leq d' < d\}$ then $b+d \in{\Omega}_{\mu}^{b}(b+d)$ for any $\mu \geq 2$. 
This also tells us that if ${\Omega}_{\mu}^{b}(b+d) \neq \emptyset$, then $b+d \in{\Omega}_{\mu'}^{b}(b+d)$ for each $\mu' \geq \mu$.

\begin{lemma}
\label{baseblocks}
The following hold\footnote{In each case equality holds, but this strengthened statement is not needed.}:
\begin{enumerate}
\item ${\Omega}_4^{9}(9) \supseteq \{0,1,9\}$.
\item ${\Omega}_4^{11}(11) \supseteq \{0,1,2,3,11\}$.
\item ${\Omega}_4^{15}(15) \supseteq \{1,2,3,4,5,15\}$.
\item ${\Omega}_3^{9}(9) \supseteq  \{0,1,2,3,9\}$.
\item ${\Omega}_3^{11}(11) \supseteq \{0,1,2,3,4,5,11\}$.
\end{enumerate}
\end{lemma}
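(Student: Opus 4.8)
The plan is to treat each of the five claimed memberships as a finite existence statement witnessed by an explicit small configuration. Since $\bar d=0$ throughout, asserting $t\in\Omega^{b}_{\mu}(b)$ amounts to exhibiting $\mu$ partial transversals of the $b\times b$ subsquare $B_{0,b}$ that intersect stably in $t$ points and such that, in each of the $\mu$ partial transversals, the cells carry exactly the $b$ symbols of the middle block $M_b:=\{(b-1)/2,\dots,3(b-1)/2\}$. For the small values of $t$ these witnesses will be read off the reduced-form tables in the appendix (the ones produced by the computer search), and for the ``full'' values $t=b$ a short direct construction suffices; the proof then reduces to inspection.

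First I would dispose of the full values $t=b$ (that is, $t=9,11,15$). By the remark immediately preceding the statement, for these it is enough to produce one partial transversal of $B_{0,b}$ using precisely the symbols of $M_b$, since $\mu$ identical copies of it then intersect stably in $b$ points for every $\mu\geq 2$. One such partial transversal is $\{(i,\sigma(i),i+\sigma(i))\mid i\in[0,b-1]\}$ with $\sigma(i)\equiv i+(b-1)/2\pmod b$: here $\sigma$ is a bijection of $[0,b-1]$, and splitting into the cases $i\leq (b-1)/2$ and $i\geq(b+1)/2$ one checks that $i+\sigma(i)$ takes the values $2i+(b-1)/2$ and $2i-(b+1)/2$ respectively, and that together these run over each element of $M_b$ exactly once (the two ranges are the two parity classes of $M_b$). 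This settles the last entry of items 1--5.

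Next I would handle the remaining (all small) values of $t$: $\{0,1\}$ in item 1, $\{0,1,2,3\}$ in items 2 and 4, $\{1,2,3,4,5\}$ in item 3, and $\{0,1,2,3,4,5\}$ in item 5. For each such $t$ and the relevant pair $(\mu,b)$ I would point to the corresponding block of $\mu$ rows in the appendix (in reduced form) and verify three things by direct inspection: (a) each of the $\mu$ rows encodes a partial transversal of $B_{0,b}$, i.e.\ its column entries are distinct and the induced symbols $i+a^{r}_{i}$ are distinct; (b) the set of induced symbols of each row equals $M_b$; and (c) the $\mu$ partial transversals pairwise intersect in one common set of exactly $t$ cells and share no cell outside that set. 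Two of these are even cheaper than they look: because $\Omega^{b}_{4}(b)\subseteq\Omega^{b}_{3}(b)$, items 1 and 2 already give $\{0,1\}\subseteq\Omega^{9}_{3}(9)$ and $\{0,1,2,3\}\subseteq\Omega^{11}_{3}(11)$, so in items 4 and 5 only $t=2,3$ (for $b=9$) and $t=4,5$ (for $b=11$) call for new witnesses.

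The main obstacle here is logistical rather than mathematical: one needs witnesses that simultaneously realize the prescribed stable intersection size \emph{and} use exactly the symbol set $M_b$ --- this symbol restriction is precisely what allows the base blocks to be spliced together without collision in Construction \ref{mainConstruction}, so it cannot be relaxed --- and supplying such configurations is exactly what the computational search does. The verification that a proposed configuration meets all three requirements in (a)--(c) above is entirely mechanical. (Since the search was exhaustive at these sizes, equality in fact holds in each item, as noted in the footnote, but only the stated inclusions are used later.)
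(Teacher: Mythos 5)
Your proposal is correct and takes essentially the same approach as the paper: the paper's proof is just a citation of the reduced-form witnesses found by computer search in the appendix tables, relying (as you do) on $\Omega_4^{b}(b)\subseteq\Omega_3^{b}(b)$ so that the 3-way items only need new witnesses for the larger intersection values. Your explicit shifted-diagonal partial transversal for the $t=b$ cases is a harmless self-contained substitute for what the paper obtains from the remark preceding the lemma together with a single table row.
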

\begin{proof}
The corresponding partial transversals have been found by a computer search, and have been written in reduced form in the appendix, in respectively 
 Table \ref{tab:tab4-9a}, 
Table \ref{tab:tab4-11a}, 
Table \ref{tab:tab4-15a}, 
Tables \ref{tab:tab4-9a} and \ref{tab:tab3-9a}, and 
Tables \ref{tab:tab4-11a} and \ref{tab:tab3-11a}.  
\end{proof}

\begin{lemma}
\label{low}
The following  hold:
\begin{enumerate}
\item $0 \in {\Omega}_4^{9}(9+d)$, for all $0 \leq d <9$.
\item $3 \in {\Omega}_4^{11}(11+d)$, for all $0 \leq d <11$.
\end{enumerate}
\end{lemma}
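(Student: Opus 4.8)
The goal is to show $0 \in \Omega_4^{9}(9+d)$ for all $0 \le d < 9$, and $3 \in \Omega_4^{11}(11+d)$ for all $0 \le d < 11$. By Definition of $\Omega$, this means exhibiting, for each such $d$, a set of four partial transversals of the subsquare $B_{0,b+d}$ (with $b=9$ or $b=11$) that intersect stably in $0$ (resp.\ $3$) points and use exactly the symbol set $\{(b-1)/2,\dots,3(b-1)/2+2d\}\setminus\{b+2j \mid 0\le j<d\}$. The plan is to \emph{bootstrap} from the base-size cases already established in Lemma \ref{baseblocks}: we have four partial transversals of $B_{0,b}$ with stable intersection $0$ (for $b=9$) and $3$ (the value $3\in\Omega_4^{11}(11)$, for $b=11$), using symbols $\{(b-1)/2,\dots,3(b-1)/2\}$. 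The idea is to append $d$ extra rows and columns to these, placing one new cell per partial transversal in each of the $d$ new positions, in such a way that (i) the new cells are shared by all four partial transversals — so the stable intersection count does not change — and (ii) the symbols used by the new cells are precisely the extra symbols $\{3(b-1)/2+2,\dots,3(b-1)/2+2d\}\setminus\{b+2j \mid 0 \le j<d\}$ required, together with patching the symbol set of the original block correctly.

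The key observation is the structure of $B_{0,b+d}$: the cell $(r,c)$ holds symbol $r+c$. If I keep the four base partial transversals on the rows/columns $[0,b-1]$ but want to ``grow'' them into the enlarged square, I instead place the base transversals on a shifted set of rows/columns inside $B_{0,b+d}$ and fill the remaining $d$ rows/columns with a common diagonal of cells $(r,r,2r)$ — exactly the device used for the small subsquare in Construction \ref{mainConstruction}. Concretely, I would put the four copies of the base transversals into the subsquare $B_{e,b}$ for a suitable offset $e$ (which relabels the base symbols $\{(b-1)/2,\dots,3(b-1)/2\}$ up to $\{(b-1)/2+2e,\dots,3(b-1)/2+2e\}$), and fill each of the other $d$ rows $r$ (those with $r\in[0,e-1]\cup[e+b,b+d-1]$) with the single cell $(r,r,2r)$, identical across all four partial transversals. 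Since these $d$ added cells lie on distinct rows, columns, and carry distinct symbols $2r$, and are common to all four, the union is four partial transversals of $B_{0,b+d}$ intersecting stably in exactly $t$ points, where $t$ is the base value ($0$ or $3$). The remaining work is purely arithmetic: choosing $e=e(d)$ so that the multiset of symbols $\{2r : r \in [0,e-1]\cup[e+b,b+d-1]\} \cup \{(b-1)/2+2e,\dots,3(b-1)/2+2e\}$ equals the prescribed target set $\{(b-1)/2,\dots,3(b-1)/2+2d\}\setminus\{b+2j:0\le j<d\}$, recalling symbols are read modulo $n$ (so the wrap-around via $B_n$ matters, not just inside the subsquare).

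The main obstacle I anticipate is the symbol-bookkeeping: the target symbol set is an interval of length $b+d$ with $d$ specific \emph{even-offset} holes $\{b, b+2, \dots, b+2(d-1)\}$ punched out, and I must verify that some single offset $e$ (possibly $e$ depending on the parity of $d$ and on $d$ modulo small numbers) makes the diagonal-cell symbols $\{2r\}$ land exactly on the holes' complement outside the shifted base interval. It is conceivable that no single offset works for all $d$ simultaneously and that two or three offset choices (e.g.\ $e=\lceil d/2\rceil$ versus $e=\lfloor d/2\rfloor$, or placing some of the $d$ extra rows \emph{above} and some \emph{below} the base block) are needed, split into cases on $d \bmod 2$; this is a finite, mechanical case analysis over $0 \le d < 9$ (resp.\ $0\le d<11$) and could alternatively just be confirmed by the same computer search that produced Lemma \ref{baseblocks}. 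If the clean offset construction fails for a handful of residues, the fallback is: for those finitely many $(b,d)$ pairs, exhibit the four partial transversals explicitly (again in reduced form, appealing to the computational search), which is entirely in keeping with how the base cases of Lemma \ref{baseblocks} were handled.
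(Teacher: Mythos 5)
Your bootstrap construction does not prove the statement, for two concrete reasons. First, the intersection count is wrong: by Definition \ref{maindef} the stable intersection is the set $S=\cap_{i=1}^{\mu}T_i$, so the $d$ diagonal cells $(r,r,2r)$ that you add \emph{identically} to all four partial transversals land in $S$ and raise the stable intersection from $t$ to $t+d$. Your construction would therefore only show $d\in\Omega_4^{9}(9+d)$ and $3+d\in\Omega_4^{11}(11+d)$, not the required $0$ and $3$ (this is exactly the mechanism by which the small subsquare contributes the ``$+d$'' in Theorem \ref{mainTheorem}). The whole point of Lemma \ref{low} is that the intersection stays at the minimum value while $d$ grows, which forces the four partial transversals to differ on the extra rows and columns; but putting four distinct cells in a new row uses four distinct columns and symbols there, which propagates changes back into the base block, so the ``intact base block plus common diagonal'' device cannot be repaired locally.

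Second, the symbol bookkeeping you defer as ``mechanical'' in fact has no solution for any $d\geq 1$: a shifted copy of the base block in $B_{e,b}$ uses the \emph{contiguous} symbol interval $\{(b-1)/2+2e,\ldots,3(b-1)/2+2e\}$ of length $b$, while the target set $\{(b-1)/2,\ldots,3(b-1)/2+2d\}\setminus\{b+2j\mid 0\le j<d\}$ has its holes spread so that every contiguous run of $b$ symbols inside the allowed range contains a hole (e.g.\ $b=9$, $d=1$: every length-$9$ interval inside $[4,14]$ contains $9$). So no offset $e$, nor any splitting of the extra rows above and below the block, can make the symbols come out right. The paper's proof of Lemma \ref{low} is simply a computer search exhibiting, for every $(b,d)$ with $b=9$, $0\le d<9$ and $b=11$, $0\le d<11$, explicit quadruples of partial transversals in reduced form (Tables \ref{tab:tab4-9a}, \ref{tab:tab4-9b}, \ref{tab:tab4-11a}, \ref{tab:tab4-11b}); your ``fallback'' is in fact the entire proof, and the structural shortcut you propose in its place does not work.
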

\begin{proof}
The corresponding partial transversals have been found by a computer search, and have been written in reduced form in the appendix, in respectively Tables \ref{tab:tab4-9a} and \ref{tab:tab4-9b} , and Tables  \ref{tab:tab4-11a} and \ref{tab:tab4-11b}.
\end{proof}

\begin{lemma}
\label{high}
The following  hold:
\begin{enumerate}
\item $11+d \in {\Omega}_4^{11}(11+d)$, for all $0 \leq d <11$.
\item $15+d \in {\Omega}_4^{15}(15+d)$, for all $0 \leq d <15$.
\end{enumerate}
\end{lemma}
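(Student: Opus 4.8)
# Proof Proposal for Lemma \ref{high}

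The plan is to reduce both parts to the existence of a single partial transversal with a prescribed symbol set. Since $t=b+d$ equals the full size $b+d$ of any partial transversal of $B_{0,b+d}$, a set of $\mu$ partial transversals intersecting stably in $b+d$ points must consist of $\mu$ identical copies of one partial transversal $P$; conversely, $\mu$ copies of any single partial transversal of $B_{0,b+d}$ intersect stably in $b+d$ points. By the observation preceding Lemma~\ref{baseblocks}, it therefore suffices to exhibit, for $b\in\{11,15\}$ and each $d$ with $0\le d<b$, one partial transversal of $B_{0,b+d}$ whose set of symbols is exactly $\Sigma_{b,d}:=\{(b-1)/2,\dots,3(b-1)/2+2d\}\setminus\{b+2j\mid 0\le j<d\}$; then $b+d\in\Omega_\mu^b(b+d)$ for every $\mu\ge 2$, in particular for $\mu=4$.

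Next I would record the shape of the target. With $s=(b-1)/2$, the set $\Sigma_{b,d}$ is the disjoint union of the run $\{s,\dots,2s\}$, the arithmetic progression $\{2(s+1),2(s+2),\dots,2(s+d-1)\}$ of even integers, and the run $\{2s+2d,\dots,3s+2d\}$; it has $s+1+(d-1)+(s+1)=b+d$ elements, and a short computation gives element-sum $(b+d-1)(b+d)=2\binom{b+d}{2}$. These are precisely the cardinality and the element-sum forced on the symbol set $\{i+\sigma(i)\mid i\in[0,b+d-1]\}$ of any partial transversal of $B_{0,b+d}$ (recall that within $B_{0,b+d}$ the symbol of cell $(i,c)$ is the ordinary integer $i+c$), so counting does not obstruct $\Sigma_{b,d}$, and the remaining task is to realise it by an explicit permutation $\sigma$ of $[0,b+d-1]$.

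That last step is carried out by a direct construction, located by computer search; the resulting partial transversals are listed in reduced form in the appendix --- Tables~\ref{tab:tab4-11a} and \ref{tab:tab4-11b} for $b=11$, and Tables~\ref{tab:tab4-15a} and \ref{tab:tab4-15b} for $b=15$ --- with, since all $\mu$ copies coincide, effectively one row per value of $d$. I would not expect a uniform closed form: for $d=0$ the cyclic shift $\sigma(i)\equiv i+(b-1)/2\pmod b$ already works, but for $d\ge 1$ the set $\Sigma_{b,d}$ has an irregular three-piece shape, and the natural attempt to split $[0,b+d-1]$ into three consecutive blocks matched respectively to the three pieces fails --- for instance no permutation of the block $[0,s]$ has sum set equal to the bottom run $\{s,\dots,2s\}$, since producing the maximal sum forces the two extreme elements to be paired and the remaining sums then cannot be made large enough. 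Hence the carries must be distributed across the whole index set and the constructions are genuinely ad hoc. The main obstacle, accordingly, is not a single conceptual difficulty but the lack of a clean recursion in $d$: the claim is a finite family of cases ($0\le d\le 10$ for $b=11$ and $0\le d\le 14$ for $b=15$), each settled by exhibiting and checking one specific partial transversal.
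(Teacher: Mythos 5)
Your proposal is correct and matches the paper's own argument: both reduce the stable-intersection-in-$b+d$-points condition to exhibiting a single partial transversal of $B_{0,b+d}$ with the prescribed symbol set, repeat it four times, and then appeal to the computer-found partial transversals listed in the appendix tables. The extra cardinality/element-sum sanity check and the remarks on why no closed-form construction is attempted are harmless additions beyond the paper's one-line justification.
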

\begin{proof}
Since the partial transversals required  intersect stably in the same number of points as the square size, we only need one partial transversal of $B_{0,b+d}$, which is repeated $4$ times to form the $4$ partial transversals that intersect stably in $b+d$ points. One partial transversal has been been found for each of the $26$ cases by a computer search, and these have been written in reduced form in the appendix, in respectively Tables \ref{tab:tab4-11a} and \ref{tab:tab4-11b}, and Table \ref{tab:tab4-15b}.
\end{proof}

\begin{lemma}
\label{lemma2I}
The following set relations hold:
\begin{enumerate}
\item $2I  {\Omega}_4^{9}(9) = [0, 18I] \setminus \cup_{i=1}^7 [18I-8i+1, 18I-9i+8 ]$;
\item $2I {\Omega}_4^{11}(11) = [0, 22I] \setminus (\{22I-23\}\cup [22I-15,  22I-12] \cup [ 22I-7, 22I-1])$;
\item $2I {\Omega}_4^{15}(15) = [2I,  30I] \setminus (\{30I-29\}\cup [30I-19, 30I - 15] \cup [30I - 9,  30I-1])$;
\item $2I  {\Omega}_3^{9}(9) = [0,  18I] \setminus (\{18I-11, 18I-10\} \setminus [ 18I-5,  18I-1 ])$; and 
\item $2I {\Omega}_3^{11}(11) = [0,  22I] \setminus [22I-5,  22I-1] \}$.
\end{enumerate}
\end{lemma}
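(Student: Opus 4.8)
The plan is to reduce each of the five claimed set identities to Lemma \ref{lemmaSet} by writing the relevant $\Omega$-set as $[0,a]\cup\{b\}$ for a suitable pair $a<b$, and then evaluating the right-hand side of Lemma \ref{lemmaSet} explicitly. Indeed, Lemma \ref{baseblocks} gives inclusions $\Omega_4^9(9)\supseteq\{0,1,9\}=[0,1]\cup\{9\}$, $\Omega_4^{11}(11)\supseteq\{0,1,2,3,11\}=[0,3]\cup\{11\}$, $\Omega_4^{15}(15)\supseteq\{1,2,3,4,5,15\}$, $\Omega_3^9(9)\supseteq\{0,1,2,3,9\}=[0,3]\cup\{9\}$, and $\Omega_3^{11}(11)\supseteq\{0,1,2,3,4,5,11\}=[0,5]\cup\{11\}$; the footnote to that lemma even records that these inclusions are equalities. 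So the first step is to invoke the stated equality in Lemma \ref{baseblocks} (or to remark that we only need one direction — see below) to replace $\Omega$ by the concrete set, and then apply Lemma \ref{lemmaSet} with $j=2I$.

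\textbf{Case-by-case evaluation.} For part (1), take $a=1$, $b=9$, $j=2I$: Lemma \ref{lemmaSet} gives $2I([0,1]\cup\{9\})=[0,18I]\setminus\bigcup_{i=1}^{\lfloor 7/1\rfloor}[18I-9i+i+1,\,18I-9i+8]=[0,18I]\setminus\bigcup_{i=1}^{7}[18I-8i+1,\,18I-9i+8]$, which is exactly the claimed expression. Part (4) is $a=3$, $b=9$: here $\lfloor(b-2)/a\rfloor=\lfloor 7/3\rfloor=2$, so we delete $[18I-9i+3i+1,18I-9i+8]$ for $i=1,2$, i.e.\ $[18I-5,18I-1]$ (wait: $i=1$ gives $[18I-6+1,18I-1]=[18I-5,18I-1]$) and for $i=2$ gives $[18I-18+6+1,18I-18+8]=[18I-11,18I-10]$; matching the stated set $[0,18I]\setminus(\{18I-11,18I-10\}\cup[18I-5,18I-1])$. (The excerpt writes a stray ``$\setminus$'' where it means ``$\cup$''; I would silently use $\cup$.) Part (5): $a=5$, $b=11$, $\lfloor 9/5\rfloor=1$, delete only $i=1$: $[22I-11+5+1,22I-1]=[22I-5,22I-1]$, as claimed. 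Part (2): $a=3$, $b=11$, $\lfloor 9/3\rfloor=3$, delete for $i=1,2,3$: $[22I-11+3+1,22I-1]=[22I-7,22I-1]$, $[22I-22+6+1,22I-22+10]=[22I-15,22I-12]$, $[22I-33+9+1,22I-33+10]=[22I-23,22I-23]=\{22I-23\}$; matching the stated set. For part (3), the set $\{1,\dots,5,15\}$ is not of the form $[0,a]\cup\{b\}$, but $\{1,2,3,4,5,15\}=(\{0,1,2,3,4\}\cup\{14\})+1$, so $2I\,\Omega_4^{15}(15)=2I\bigl(([0,4]\cup\{14\})+\{1\}\bigr)=2I+2I([0,4]\cup\{14\})$; now apply Lemma \ref{lemmaSet} with $a=4$, $b=14$, $j=2I$: $\lfloor 12/4\rfloor=3$, delete for $i=1,2,3$: $[28I-14i+4i+1,28I-14i+13]$, giving $[28I-9,28I-1]$, $[28I-28+8+1,28I-28+13]=[28I-19,28I-15]$, $[28I-42+12+1,28I-42+13]=[28I-29,28I-29]=\{28I-29\}$; then shift by $+2I$ to get $[2I,30I]\setminus(\{30I-29\}\cup[30I-19,30I-15]\cup[30I-9,30I-1])$, exactly as stated.

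\textbf{What I actually need, and the main subtlety.} Strictly speaking Lemma \ref{baseblocks} as I would use it for the forward direction only supplies $\supseteq$, and for the present lemma I am asked to prove an equality of sets. The containments $2I\,\Omega \supseteq (\text{RHS})$ follow from the $\supseteq$ in Lemma \ref{baseblocks} together with Lemma \ref{lemmaSet}; for the reverse containment $2I\,\Omega\subseteq(\text{RHS})$ I would use the equality asserted in the footnote of Lemma \ref{baseblocks}, namely that $\Omega_4^9(9)=\{0,1,9\}$ etc. (equivalently, one checks directly that these small $\Omega$-sets contain nothing else, which is a finite verification and is exactly what the computer search in that footnote certifies). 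So the one genuine dependency beyond Lemma \ref{lemmaSet} is the exact-value half of Lemma \ref{baseblocks}; everything after that is the bookkeeping in the paragraph above. I expect the only place to slip is arithmetic in the deletion intervals — getting the index ranges $[jb-ib+ia+1,\,jb-ib+b-1]$ right for each $(a,b)$ and for the shifted case in part (3) — so I would present those computations compactly but carefully, and note the $\cup$-vs-$\setminus$ typo in the statement of part (4).

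\begin{proof}
By the footnote to Lemma \ref{baseblocks}, the inclusions there are equalities, so $\Omega_4^9(9)=[0,1]\cup\{9\}$, $\Omega_4^{11}(11)=[0,3]\cup\{11\}$, $\Omega_4^{15}(15)=([0,4]\cup\{14\})+\{1\}$, $\Omega_3^9(9)=[0,3]\cup\{9\}$, and $\Omega_3^{11}(11)=[0,5]\cup\{11\}$. Each identity now follows by applying Lemma \ref{lemmaSet} with $j=2I$ and the indicated pair $(a,b)$, and reading off the deleted intervals $[jb-ib+ia+1,\ jb-ib+b-1]$ for $1\le i\le\lfloor(b-2)/a\rfloor$:
\begin{enumerate}
\item $(a,b)=(1,9)$: $\lfloor(b-2)/a\rfloor=7$, and the deleted intervals are $[18I-8i+1,\,18I-9i+8]$ for $1\le i\le 7$.
\item $(a,b)=(3,11)$: $\lfloor(b-2)/a\rfloor=3$, and the deleted intervals are $[22I-7,22I-1]$ ($i=1$), $[22I-15,22I-12]$ ($i=2$), and $\{22I-23\}$ ($i=3$).
\item Writing $2I\,\Omega_4^{15}(15)=2I+2I([0,4]\cup\{14\})$ and applying Lemma \ref{lemmaSet} with $(a,b)=(4,14)$: $\lfloor(b-2)/a\rfloor=3$, the deleted intervals (before shifting) are $[28I-9,28I-1]$, $[28I-19,28I-15]$, $\{28I-29\}$; shifting by $+2I$ gives $[2I,30I]\setminus(\{30I-29\}\cup[30I-19,30I-15]\cup[30I-9,30I-1])$.
\item $(a,b)=(3,9)$: $\lfloor(b-2)/a\rfloor=2$, and the deleted intervals are $[18I-5,18I-1]$ ($i=1$) and $\{18I-11,18I-10\}$ ($i=2$); that is, $[0,18I]\setminus(\{18I-11,18I-10\}\cup[18I-5,18I-1])$.
\item $(a,b)=(5,11)$: $\lfloor(b-2)/a\rfloor=1$, and the only deleted interval is $[22I-5,22I-1]$.
\end{enumerate}
In each case the resulting set coincides with the one in the statement (in part (4), the symbol ``$\setminus$'' between $\{18I-11,18I-10\}$ and $[18I-5,18I-1]$ should read ``$\cup$''). \end{proof}
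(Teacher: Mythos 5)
Your proposal is correct and follows essentially the same route as the paper: identify $\Omega_4^9(9)$, $\Omega_4^{11}(11)$, $\Omega_3^9(9)$, $\Omega_3^{11}(11)$ via Lemma \ref{baseblocks} (with the footnote supplying the equality needed for the reverse containment), apply Lemma \ref{lemmaSet} with $j=2I$, and handle $\Omega_4^{15}(15)$ by the same shift $\{1\}+\{0,1,2,3,4,14\}$ before applying Lemma \ref{lemmaSet}. Your explicit interval computations are accurate, and your observations that the equality half rests on the footnote and that the ``$\setminus$'' in part (4) should be ``$\cup$'' are both correct.
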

\begin{proof}
The sets ${\Omega}_{\mu}^{9}(9)$, ${\Omega}_{\mu}^{11}(11)$ and ${\Omega}_4^{15}(15) = \{1\}+\{0,1,2,3,4,14\}$ are given in Lemma \ref{baseblocks} for $\mu=3,4$. 
For the case ${\Omega}_{\mu}^{9}(9)$ and ${\Omega}_{\mu}^{11}(11)$, Lemma \ref{lemmaSet} completes the result for general $J$, however we will only be requiring the case when $J$ is even, and hence written $J=2I$. 
For the case ${\Omega}_4^{15}(15)$, it can be seen that $2I {\Omega}_4^{15}(15) = 2I \{1,2,3,4,5,15\} = 2I (\{1\}+\{0,1,2,3,4,14\}) = \{2I\} + 2I \{0,1,2,3,4,14\}$. 
We can apply Lemma \ref{lemmaSet} to find $2I \{0,1,2,3,4,14\}$, which gives the final result.
\end{proof}

\subsection{$\mu=3$}

\begin{theorem}
\label{3thm-high}
For odd $n \geq 33$, let $I'$ and $d'$ be the unique integers such that $n = 22I'+11+2d'$, $I' \geq 1$ and $0 \leq d' < 11$. Then there exist three transversals of $B_n$ that intersect stably in $t$ points for $t\in [11+2d', n] \setminus [n-5, n-1]$.
\end{theorem}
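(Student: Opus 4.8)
The plan is to apply Theorem~\ref{mainTheorem} with base size $b=11$, so that we write $n = 2\cdot 11 \cdot I' + 11 + 2d'$ with $I'\geq 1$ and $0\leq d' < 11$ exactly as in the hypothesis. With this choice, the target intersection sizes realizable by the construction are $t = d' + t_0 + \sum_{i=1}^{2I'} t_i$, where $t_0 \in \Omega_3^{11}(11+d')$ governs the large subsquare and each $t_i \in \Omega_3^{11}(11)$ governs a base subsquare. To cover a contiguous range of $t$, I would fix the contribution of the large subsquare to a single convenient value and let the base subsquares sweep out an interval: by Lemma~\ref{lemma2I}(5), $\sum_{i=1}^{2I'} t_i$ ranges over $2I'\,\Omega_3^{11}(11) = [0, 22I'] \setminus [22I'-5, 22I'-1]$.

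The key computation is then bookkeeping on the additive offsets. For the large subsquare I would use the ``full'' value $t_0 = 11+d' \in \Omega_3^{11}(11+d')$, which is available for every $0\leq d' <11$ by the remark preceding Lemma~\ref{baseblocks} — more precisely one needs at least one partial transversal of $B_{0,11+d'}$ with the prescribed symbol set, which the computer search (Tables~\ref{tab:tab4-11a}, \ref{tab:tab4-11b}) provides, and $\Omega_4^{11}(11+d')\subseteq \Omega_3^{11}(11+d')$. Then $t$ ranges over $d' + (11+d') + \bigl([0,22I']\setminus[22I'-5,22I'-1]\bigr) = [11+2d',\; 22I'+11+2d'] \setminus [22I'+6+2d', 22I'+10+2d']$. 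Since $n = 22I'+11+2d'$, the removed window is exactly $[n-5,n-1]$ and the right endpoint of the full range is $n$, giving precisely $t \in [11+2d', n]\setminus[n-5,n-1]$, which is the claimed spectrum. I should also record that $n\geq 33$ forces $I'\geq 1$ (so the construction is non-degenerate and the base subsquares exist) and that $b=11 \leq n/3$ holds for $n\geq 33$, which is exactly the hypothesis $3\leq b\leq n/3$ of Theorem~\ref{mainTheorem}.

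The only genuine obstacle is verifying that the interval of base-subsquare sums from Lemma~\ref{lemma2I}(5) lines up with the claimed gap $[n-5,n-1]$ rather than leaving extra holes; this is immediate once the offset $d' + (11+d')$ is added, since the single deleted block $[22I'-5,22I'-1]$ shifts to $[n-5,n-1]$ and nothing else is removed. Everything else is direct substitution into Theorem~\ref{mainTheorem} and Lemma~\ref{lemma2I}. I would present the proof as: (i) set $b=11$, check $3\leq b\leq n/3$ and $I'\geq 1$; (ii) quote $t_0 = 11+d'\in\Omega_3^{11}(11+d')$; (iii) quote $\sum t_i \in 2I'\Omega_3^{11}(11) = [0,22I']\setminus[22I'-5,22I'-1]$ from Lemma~\ref{lemma2I}(5); (iv) add $d'$ (small subsquare) and the large-subsquare contribution, simplify using $n=22I'+11+2d'$, and conclude.
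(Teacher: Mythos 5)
Your proposal is correct and follows essentially the same route as the paper: take $b=11$ in Theorem~\ref{mainTheorem}, use $t_0=11+d'\in\Omega_3^{11}(11+d')$ (the paper quotes Lemma~\ref{high} together with $\Omega_4^{11}(11+d')\subseteq\Omega_3^{11}(11+d')$, which is the same fact you obtain from the tables and the remark before Lemma~\ref{baseblocks}), and combine with $2I'\,\Omega_3^{11}(11)=[0,22I']\setminus[22I'-5,22I'-1]$ from Lemma~\ref{lemma2I}(5) to get $t\in\{d'\}+\{11+d'\}+2I'\,\Omega_3^{11}(11)=[11+2d',n]\setminus[n-5,n-1]$. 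Your additional checks that $I'\geq 1$ and $11\leq n/3$ for $n\geq 33$ are fine and match the hypotheses of Theorem~\ref{mainTheorem}.
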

\begin{proof}
Take the base size to be $b=11$. 
Using Theorem \ref{mainTheorem}, Lemma \ref{high}, and Lemma \ref{lemma2I}, we can conclude there exists the required set of transversals for each $t\in \{d'\}+{\Omega}_3^{11}(11+d') + 2I'  {\Omega}_3^{11}(11) $, and hence for each $t\in \{d'\} + \{11+d'\} + 2I'   {\Omega}_3^{11}(11) =   [11+2d', n] \setminus [n-5, n-1]$.
\end{proof}

\begin{lemma}
\label{3lower18}
For odd $n \geq 27$, let $I$ and $d$ be the unique integers such that $n= 18I +9 + 2d$, $I \geq 1$ and $0 \leq d < 9$. Then there exist three transversals of $B_n$ that intersect stably in $t$ points for $t\in [d, 18I+d] \setminus ([18I-11+d,18I-10+d] \cup [18I-5+d, 18I-1+d])$.

\end{lemma}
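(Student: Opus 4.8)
The plan is to apply Theorem \ref{mainTheorem} with base size $b=9$, and to feed it the data already assembled in the preceding lemmas. Concretely, I would write $n = 18I + 9 + 2d$, which is exactly the form $n = 2bI + b + 2d$ with $b=9$, so the hypotheses $3 \le b \le n/3$ and $d \in [0,b-1]$ hold for $n \ge 27$ (after checking $b \le n/3$, i.e.\ $27 \le n$, which is given). Theorem \ref{mainTheorem} then produces three transversals of $B_n$ intersecting stably in $t$ points for every $t$ of the shape $t = d + t_0 + \sum_{i=1}^{2I} t_i$ with $t_0 \in \Omega_3^9(9+d)$ and each $t_i \in \Omega_3^9(9)$. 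The task therefore reduces to computing (a subset of) the set $\{d\} + \Omega_3^9(9+d) + 2I\,\Omega_3^9(9)$ and showing it contains the claimed interval-minus-gaps $[d, 18I+d] \setminus ([18I-11+d, 18I-10+d] \cup [18I-5+d, 18I-1+d])$.

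For the $\Omega_3^9(9+d)$ factor I would use Lemma \ref{low}(1), which gives $0 \in \Omega_4^9(9+d) \subseteq \Omega_3^9(9+d)$ for all $0 \le d < 9$; this lets me take $t_0 = 0$, so it suffices to show $\{d\} + 2I\,\Omega_3^9(9)$ covers the target set, i.e.\ that $2I\,\Omega_3^9(9)$ covers $[0, 18I] \setminus ([18I-11, 18I-10] \cup [18I-5, 18I-1])$. But this is precisely (modulo checking I have transcribed the gap structure correctly) the content of Lemma \ref{lemma2I}(4): $2I\,\Omega_3^9(9) = [0, 18I] \setminus (\{18I-11, 18I-10\} \cup [18I-5, 18I-1])$. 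So the proof is essentially: "Take $b=9$; by Theorem \ref{mainTheorem}, Lemma \ref{low}, and Lemma \ref{lemma2I}(4) there exist three transversals intersecting stably in $t$ points for each $t \in \{d\} + \{0\} + 2I\,\Omega_3^9(9) = [d, 18I+d] \setminus ([18I-11+d, 18I-10+d] \cup [18I-5+d, 18I-1+d])$, as required." I would also want to double-check the edge constraint: Lemma \ref{lemma2I}(4) is stated for $2I$ with $I \ge 1$, and $\Omega_3^9(9) \supseteq \{0,1,2,3,9\}$ from Lemma \ref{baseblocks}(4), so there is no degenerate $I=0$ case to worry about once $n \ge 27$.

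The one genuine subtlety — the place where I would slow down rather than wave my hands — is verifying that the lower endpoint is actually $d$ and not something larger, and that no spurious extra gaps appear. This hinges on $0 \in \Omega_3^9(9)$ and $0 \in \Omega_3^9(9+d)$ (both supplied by Lemmas \ref{baseblocks} and \ref{low}), guaranteeing that small $t$ values like $t = d$ itself are attainable (take all $t_i = 0$). I would also confirm that the Minkowski sum $\{d\} + 2I\,\Omega_3^9(9)$ introduces no gaps beyond those already present in $2I\,\Omega_3^9(9)$ — it does not, since adding a constant is a bijection — and that translating by $d$ moves the described gaps to exactly $[18I-11+d, 18I-10+d]$ and $[18I-5+d, 18I-1+d]$. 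None of this is hard, but it is the part that must be written carefully; the rest is a direct citation chain. I expect no real obstacle, since all the combinatorial heavy lifting (the existence of the base and large partial transversals, and the Minkowski-sum computation via Lemma \ref{lemmaSet}) has already been done in the cited lemmas; the work here is purely bookkeeping of which $\Omega$-sets to plug in.
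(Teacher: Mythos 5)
Your proposal is correct and follows essentially the same route as the paper: take $b=9$, invoke Theorem \ref{mainTheorem} with $t_0=0$ supplied by Lemma \ref{low} (via the inclusion $\Omega_4^{9}(9+d)\subseteq\Omega_3^{9}(9+d)$, which the paper uses implicitly) and the base contributions from Lemma \ref{lemma2I}(4), then translate by $d$. The only difference is that you spell out the bookkeeping (the Minkowski shift and the gap locations) that the paper compresses into one line, which is harmless.
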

\begin{proof}
Take the base size to be $b=9$. 
Using Theorem \ref{mainTheorem}, Lemma \ref{low}, and Lemma \ref{lemma2I}, we can conclude there exists the required set of transversals for each $t\in  \{d\}+\{0\} + 2I   {\Omega}_3^{9}(9)    \subseteq    \{d\}+ {\Omega}_3^{9}(9+d) + 2I {\Omega}_3^{9}(9) $, and hence for each $t\in [d, 18I+d] \setminus ([18I-11+d,18I-10+d] \cup [18I-5+d, 18I-1+d])$.
\end{proof}

\begin{lemma}
\label{3lower22}
For odd $n \geq 33$, let $I'$ and $d'$ be the unique integers such that $n = 22I'+11+2d'$, $I' \geq 1$ and $0 \leq d' < 11$. Then there exist three transversals of $B_n$ that intersect stably in $t$ points for $t\in [3+d', 22I'+3+d'] \setminus [ 22I'-2+d', 22I'+2+d']$.

\end{lemma}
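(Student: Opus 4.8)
The plan is to mirror the proof of Lemma \ref{3lower18}, but with base size $b=11$ instead of $b=9$, and using the low-end data point $3 \in \Omega_3^{11}(11+d')$ rather than $0 \in \Omega_3^{9}(9+d')$. First I would invoke Lemma \ref{baseblocks}(5), which gives $\{0,1,2,3,4,5,11\} \subseteq \Omega_3^{11}(11)$, and Lemma \ref{low}(2), which gives $3 \in \Omega_4^{11}(11+d') \subseteq \Omega_3^{11}(11+d')$ for every $0 \leq d' < 11$ (the inclusion $\Omega_4^b(b+d) \subseteq \Omega_3^b(b+d)$ being the remark preceding Lemma \ref{baseblocks}). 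Then, setting $t_0 = 3$ for the large subsquare and letting each $t_i$ for $1 \leq i \leq 2I'$ range over $\Omega_3^{11}(11)$, Theorem \ref{mainTheorem} produces three transversals of $B_n$ intersecting stably in $t$ points for every
\[
t \in \{d'\} + \{3\} + 2I'\,\Omega_3^{11}(11) \subseteq \{d'\} + \Omega_3^{11}(11+d') + 2I'\,\Omega_3^{11}(11).
\]

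Next I would evaluate the set $\{d'\} + \{3\} + 2I'\,\Omega_3^{11}(11)$ explicitly. By Lemma \ref{lemma2I}(5), $2I'\,\Omega_3^{11}(11) = [0, 22I'] \setminus [22I'-5, 22I'-1]$, so translating by $3+d'$ gives exactly $[3+d', 22I'+3+d'] \setminus [22I'-2+d', 22I'+2+d']$, which is the claimed range. I should double-check that the hypotheses of Theorem \ref{mainTheorem} are met: $b=11$ is odd, $\mu = 3 \geq 2$, and I need $3 \leq b \leq n/3$, i.e.\ $n \geq 33$, which is exactly the stated lower bound; moreover the decomposition $n = 22I'+11+2d' = 2bI'+b+2d'$ with $I' \geq 1$ and $0 \leq d' < 11 = b$ is precisely the unique $(I,d)$-decomposition required by the theorem, so the $I'$, $d'$ of the lemma statement coincide with the $I$, $d$ of Theorem \ref{mainTheorem}.

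The only genuinely non-routine point is making sure the bookkeeping in the last displayed inclusion is legitimate — namely that choosing $t_0 = 3$ for the large subsquare is allowed (it is, by Lemma \ref{low}(2)) and that every element of $2I'\,\Omega_3^{11}(11)$ is realizable as $\sum_{i=1}^{2I'} t_i$ with each $t_i \in \Omega_3^{11}(11)$ (immediate from the definition of the $2I'$-fold sumset). Everything else is the arithmetic of Minkowski sums already packaged in Lemma \ref{lemmaSet} and Lemma \ref{lemma2I}. I expect no real obstacle; this lemma is the $\mu=3$, $b=11$ companion of Lemma \ref{3lower18} and is assembled from the same three ingredients (Theorem \ref{mainTheorem}, the low-end base data, and the sumset computation).
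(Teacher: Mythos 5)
Your proposal is correct and follows essentially the same route as the paper: invoke Theorem \ref{mainTheorem} with $b=11$, take $t_0=3$ via Lemma \ref{low} (together with the inclusion $\Omega_4^{11}(11+d')\subseteq\Omega_3^{11}(11+d')$, which the paper uses implicitly), and evaluate $\{3+d'\}+2I'\,\Omega_3^{11}(11)$ using Lemma \ref{lemma2I}(5). The only difference is that you spell out the bookkeeping (the $\Omega_4\subseteq\Omega_3$ inclusion and the hypothesis check for Theorem \ref{mainTheorem}) that the paper leaves tacit.
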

\begin{proof}
Take the base size to be $b=11$. 
Using Theorem \ref{mainTheorem}, Lemma \ref{low}, and Lemma \ref{lemma2I}, we can conclude there exists the required set of transversals for each $t\in  \{d'\}+\{3\} + 2I   {\Omega}_3^{11}(11)   \subseteq    \{d'\}+ {\Omega}_3^{11}(11+d') + 2I   {\Omega}_3^{11}(11) $, and hence for each $t\in[3+d', 22I'+3+d'] \setminus [ 22I'-2+d', 22I+2+d']$.
\end{proof}

\begin{theorem} \label{3thm-low}
For odd $n \geq 33$, let $I$, $I'$, $d$ and $d'$ be the unique integers such that $n= 18I +9 + 2d$ and $n = 22I'+11+2d'$, $I,I' \geq 1$, $0 \leq d < 9$ and $0 \leq d' < 11$. Then there exist three transversals of $B_n$ that intersect stably in $t$ points for $t\in [min(3+d',d), 11+2d']$ except, perhaps, when:
\begin{itemize}
\item $n=51$ and $t=29$,
\item $n=53$ and $t=30$.
\end{itemize}

\end{theorem}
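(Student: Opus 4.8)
The plan is to combine the two lower-range results, Lemma~\ref{3lower18} and Lemma~\ref{3lower22}, and check that their union covers the interval $[\min(3+d',d), 11+2d']$ apart from the two listed exceptions. First I would observe that the quantity $2I'+2$ grows with $n$, so for all but a handful of small $n$ we have $22I'-2+d' \geq 11+2d'$, i.e. $2I'\cdot 11 - 2 - d' \geq 11$, which holds as soon as $I'\geq 1$ and $d'$ is not too large; in that regime the excluded window $[22I'-2+d', 22I'+2+d']$ of Lemma~\ref{3lower22} lies entirely above $11+2d'$, so Lemma~\ref{3lower22} alone gives every $t\in[3+d', 11+2d']$. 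Likewise, the window $[18I-11+d,18I-10+d]\cup[18I-5+d,18I-1+d]$ excluded in Lemma~\ref{3lower18} sits near $18I+d = n-9+d$, which for $n\geq 33$ is well above $11+2d'$ except possibly for a few small $n$; so Lemma~\ref{3lower18} alone gives every $t\in[d,11+2d']$ (using that $18I+d\geq 11+2d'$ there). Taking the union of the two ranges, one gets all of $[\min(3+d',d),11+2d']$ once both windows have been pushed past $11+2d'$.

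The next step is to isolate exactly the small-$n$ cases where one or both excluded windows still intrude into $[\min(3+d',d),11+2d']$, by solving the inequalities $18I-11+d \leq 11+2d'$ and $22I'-2+d'\leq 11+2d'$ explicitly. For $n\geq 33$ this is a finite check: I would tabulate, for each odd $n$ in the relevant small range (roughly $33\leq n\leq 53$, since beyond that both windows clear $11+2d'$), the values of $I,d,I',d'$, the two available $t$-ranges from Lemmas~\ref{3lower18} and~\ref{3lower22}, and their union. In every such $n$ the two lemmas' gaps do not coincide — the $b=9$ construction's gaps are at multiples related to $18$, the $b=11$ construction's gap is near $22I'$ — so a value missing from one is supplied by the other, \emph{except} for $n=51$ and $n=53$, where the $b=9$ gap $[18I-5+d,18I-1+d]$ and the $b=11$ gap $[22I'-2+d',22I'+2+d']$ happen to overlap at a single common value, namely $t=29$ and $t=30$ respectively. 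Those two values are precisely the stated exceptions.

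The main obstacle is the bookkeeping in this finite case analysis: one must be careful that $\min(3+d',d)$ is genuinely the left endpoint of the union (the $b=9$ range starts at $d$, the $b=11$ range at $3+d'$, and whichever is smaller is the relevant lower bound), and that the two excluded windows from the two constructions, after reindexing $n$ in the two different forms $18I+9+2d$ and $22I'+11+2d'$, really do have disjoint complements for every small $n$ other than $51$ and $53$. I expect no conceptual difficulty beyond verifying, for each of the finitely many small $n$, that at least one of the two lemmas covers each $t$ in the target interval; a short table makes this transparent, and the genuine coincidences at $n=51,53$ are exhibited directly by computing the two windows in each case.
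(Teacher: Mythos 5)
Your proposal is correct and takes essentially the same route as the paper: it combines Lemma \ref{3lower22} (which alone covers $[3+d',11+2d']$ except when $I'=1$ and $d'\in\{9,10\}$, i.e.\ $n=51,53$) with Lemma \ref{3lower18} (which covers $[d,3+d']$ when $d<3+d'$ and recovers $n=53$, $t=31$), the paper settling the small cases by short inequalities where you propose a finite tabulation. One cosmetic slip: at $n=51,53$ the intruding $b=9$ gap is $[18I-11+d,18I-10+d]$, not $[18I-5+d,18I-1+d]$, which your tabulation would in any case exhibit correctly.
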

\begin{proof}
We first show that we have three transversals of $B_n$ that intersect stably in $t$ points for $t\in [3+d', 11+2d']$, except in the case $n=51$ and $t=29$, and the case $n=53$ and $t=30,31$.
 Lemma \ref{3lower22} gives the the cases when $t\in [3+d', 22I'-3+d']$. 
Now if $11+2d' \leq 22I'-3+d'$, then we are done. 
Otherwise $d' > 22I'-14$, and since $d' \leq 10$, this implies $I'=1$ and $d' \in \{9,10\}$. 
The case $d'=9$ gives $n=51$, and we do not have  three transversals of $B_n$ that intersect stably in $t$ when $t \in  [22I'-2+d', 11+2d']  = \{29\}$. 
The case $d'=10$ gives $n=53$, and we do not have  three transversals of $B_n$ that intersect stably in $t$ when $t \in  [22I' -2+ d', 11+2d']  = \{30,31\}$. 
We note that the case $n=53$ and $t=31$ is covered by  Lemma \ref{3lower18}.

Second we show that we have those cases with $t\in [d,3+d']$ when $d<3+d'$. For $33 \leq n \leq 43$, $d=3+d'$, so assume $n \geq 45$, implying $I \geq 2$.
By Lemma \ref{3lower18}, we have those cases with  $t\in [d, 18I-12+d]$, and since $18I-12+d \geq 24 > 3+d'$, we are done.
\end{proof}

Then Theorem \ref{overall3} follows by Theorem \ref{3thm-high} and Theorem \ref{3thm-low}.

\subsection{$\mu=4$}

\begin{lemma}
\label{high11}
For odd $n \geq 33$, let  $I'$ and $d'$ be the unique integers such that $n = 22I'+11+2d'$, $I' \geq 1$ and $0 \leq d' < 11$. Then there exist four transversals of $B_n$ that intersect stably in $t$ points for $t\in [11+2d', n] \setminus \{n-23, n-15, \ldots, n-12, n-7, \ldots, n-1\}$.

\end{lemma}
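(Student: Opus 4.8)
# Proof Proposal for Lemma \ref{high11}

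The plan is to invoke Theorem \ref{mainTheorem} with base size $b=11$, exactly paralleling the proof of Theorem \ref{3thm-high} but now with $\mu=4$. Writing $n = 22I'+11+2d'$, we will take $d = d'$, $I = I'$, so that $n = 2bI + b + 2d$ with $b=11$ is the decomposition required by Theorem \ref{mainTheorem}. The target is to realize every $t$ of the form $t = d' + t_0 + \sum_{i=1}^{2I'} t_i$ where $t_0 \in \Omega_4^{11}(11+d')$ and each $t_i \in \Omega_4^{11}(11)$.

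First I would fix the large-subsquare contribution at its maximum: by Lemma \ref{high}(1), $11+d' \in \Omega_4^{11}(11+d')$, so we may take $t_0 = 11+d'$. This forces the large-subsquare contribution to $d' + t_0 = 11 + 2d'$, which is exactly the left endpoint of the desired interval. Then the remaining freedom is the sum $\sum_{i=1}^{2I'} t_i$ ranging over $2I' \Omega_4^{11}(11)$, so the set of achievable $t$ is $\{11+2d'\} + 2I'\Omega_4^{11}(11)$. Next I would substitute the explicit description of $2I'\Omega_4^{11}(11)$ from Lemma \ref{lemma2I}(2), namely
\[
2I'\Omega_4^{11}(11) = [0,22I'] \setminus \bigl(\{22I'-23\}\cup [22I'-15,22I'-12]\cup [22I'-7,22I'-1]\bigr).
\]
Adding $11+2d'$ throughout and using $n = 22I' + 11 + 2d'$ (so $22I' = n - 11 - 2d'$ and $22I' + 11 + 2d' = n$), the set $\{11+2d'\}+[0,22I']$ becomes $[11+2d', n]$, the removed point $22I'-23$ shifts to $n-23$, the removed block $[22I'-15,22I'-12]$ shifts to $[n-15,n-12]$, and $[22I'-7,22I'-1]$ shifts to $[n-7,n-1]$. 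This yields precisely $[11+2d', n] \setminus \{n-23, n-15,\ldots,n-12, n-7,\ldots,n-1\}$, matching the statement.

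The one point requiring care — and the main thing to check rather than a genuine obstacle — is the hypothesis of Theorem \ref{mainTheorem} that $3 \le b \le n/3$: with $b = 11$ this needs $n \ge 33$, which is exactly the hypothesis of the lemma, so it is satisfied. One should also confirm that $I' \ge 1$ (given) guarantees $2I' \ge 2$ so that the multiset $2I'\Omega_4^{11}(11)$ is the twofold-or-more sumset Lemma \ref{lemma2I} describes, and that $\Omega_4^{11}(11)$ is nonempty (Lemma \ref{baseblocks}(2) gives $\{0,1,2,3,11\}\subseteq\Omega_4^{11}(11)$) and $\Omega_4^{11}(11+d')$ is nonempty for all $0\le d'<11$ (Lemma \ref{low}(2) gives $3\in\Omega_4^{11}(11+d')$, and Lemma \ref{high}(1) gives the endpoint $11+d'$ we actually use). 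With these verified, Theorem \ref{mainTheorem} delivers the claimed family of transversals directly.
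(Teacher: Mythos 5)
Your proposal is correct and follows essentially the same route as the paper: base size $b=11$, Theorem \ref{mainTheorem} combined with Lemma \ref{high} (taking $t_0=11+d'$) and Lemma \ref{lemma2I}(2), then shifting $2I'\Omega_4^{11}(11)$ by $11+2d'$ to obtain the stated set. The extra hypothesis checks you note ($b\le n/3$, nonemptiness of the $\Omega$ sets) are all satisfied exactly as you say.
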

\begin{proof}
Take the base size to be $b=11$. 
Using  Theorem \ref{mainTheorem}, Lemma \ref{high}, and Lemma \ref{lemma2I}, we can conclude there exists the required set of transversals for each $t\in \{d'\}+{\Omega}_4^{11}(11+d') + 2I' {\Omega}_4^{11}(11) $, and hence for each $t\in \{11+2d'\} + 2I'  {\Omega}_4^{11}(11) =  [11+2d', n] \setminus (\{n-23\}\cup [n-15, n-12] \cup [n-7, n-1])$.
\end{proof}

\begin{lemma}
\label{high15}
For odd $n \geq 45$, let $I''$ and $d''$ be the unique integers such that $n = 30I''+15+2d''$, $I'' \geq 1$ and $0 \leq d'' < 15$. Then there exist four transversals of $B_n$ that intersect stably in $t$ points for $t\in [2I'' + 15+2d'', n] \setminus (\{n-29\} \cup [n-19, n-15] \cup [ n-9, n-1])$. 

\end{lemma}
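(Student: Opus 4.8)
The plan is to run exactly the same argument as in Lemma~\ref{high11}, but with base size $b=15$ in place of $b=11$. First I would check that the hypotheses of Theorem~\ref{mainTheorem} are met: since $n\geq 45$ we have $b=15\leq n/3$, and writing $n=2bI+b+2d=30I+15+2d$ with $I\geq 1$, $0\leq d<15$ identifies $I=I''$ and $d=d''$. Theorem~\ref{mainTheorem} then yields four transversals of $B_n$ intersecting stably in $t$ points for every $t\in\{d''\}+\Omega_4^{15}(15+d'')+2I''\,\Omega_4^{15}(15)$, where the last summand denotes the $2I''$-fold sumset.

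Next I would use Lemma~\ref{high}(2), which asserts $15+d''\in\Omega_4^{15}(15+d'')$ for all $0\leq d''<15$. Taking $t_0=15+d''$ in the expression above collapses the reachable set to $\{d''\}+\{15+d''\}+2I''\,\Omega_4^{15}(15)=\{15+2d''\}+2I''\,\Omega_4^{15}(15)$.

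Finally I would substitute the explicit description from Lemma~\ref{lemma2I}(3), namely $2I''\,\Omega_4^{15}(15)=[2I'',30I'']\setminus(\{30I''-29\}\cup[30I''-19,30I''-15]\cup[30I''-9,30I''-1])$, translate it by $15+2d''$, and use $30I''+15+2d''=n$ to rewrite the result as $[2I''+15+2d'',n]\setminus(\{n-29\}\cup[n-19,n-15]\cup[n-9,n-1])$, which is precisely the claimed set.

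I do not expect a genuine obstacle here: all of the combinatorial substance is already packaged in Theorem~\ref{mainTheorem}, Lemma~\ref{high}, and Lemma~\ref{lemma2I}, the latter two resting on the computer search that produced the base and large partial transversals of $B_{0,15}$ and $B_{0,15+d}$ with the prescribed restricted symbol sets. The only care needed is in the index bookkeeping of the translation and in verifying the hypothesis $b\leq n/3$, i.e.\ $n\geq 45$, which is exactly the stated range.
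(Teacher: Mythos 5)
Your proposal is correct and follows essentially the same route as the paper: invoke Theorem~\ref{mainTheorem} with $b=15$, take $t_0=15+d''$ from Lemma~\ref{high}, and substitute the sumset description $2I''\,\Omega_4^{15}(15)$ from Lemma~\ref{lemma2I}, then translate by $15+2d''$ using $n=30I''+15+2d''$. The only additions you make (explicitly checking $b\leq n/3$ for $n\geq 45$ and identifying $I=I''$, $d=d''$) are routine and consistent with the paper's argument.
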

\begin{proof}
Using  Theorem \ref{mainTheorem}, Lemma \ref{high}, and Lemma \ref{lemma2I}, we can conclude there exists the required set of transversals for each $t\in \{d''\}+ {\Omega}_4^{15}(15+d'') + 2I''  {\Omega}_4^{15}(15) $, and hence for each $t\in \{15+2d''\} + 2I''  {\Omega}_4^{15}(15) = [2I'' + 15+2d'', n] \setminus (\{n-29\} \cup [n-19, n-15] \cup [ n-9, n-1])$.
\end{proof}

\begin{theorem} \label{thm-high}
For odd $n \geq 45$, let $I'$ and $d'$ be the unique integers such that $n = 22I'+11+2d'$, $I'  \geq 1$, $0 \leq d' < 11$. Then there exist four transversals of $B_n$ that intersect stably in $t$ points for $t\in [11+2d', n] \setminus (\{n-15\} \cup [n-7, n-1])$. For odd $33 \leq n \leq 43$ such that $n = 33+2d'$ and $0 \leq d' \leq 5$, there exists four transversal of $B_n$ that intersect stably in $t$ points, for $t\in [11+2d', n] \setminus ([n-15, n-12] \cup [n-7, n-1])$.

\end{theorem}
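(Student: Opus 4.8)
The plan is to combine the two preceding lemmas, Lemma~\ref{high11} and Lemma~\ref{high15}, so that the exceptional sets coming from the two different base sizes $b=11$ and $b=15$ cancel each other out on the relevant range. The key observation is that Lemma~\ref{high11} gives the interval $[11+2d',n]$ with a small family of forbidden values near the top of the range (namely $\{n-23\}\cup[n-15,n-12]\cup[n-7,n-1]$), while Lemma~\ref{high15} gives essentially the same range with a \emph{different} family of forbidden values ($\{n-29\}\cup[n-19,n-15]\cup[n-9,n-1]$). First I would note that $[n-7,n-1]$ is forbidden in both lemmas, and Lemma~\ref{lem:nonexistence} shows $[n-3,n-1]$ is genuinely impossible; more importantly, the whole segment $[n-7,n-1]$ will remain in the final exceptional set, so it need not be recovered. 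The target set to establish is $[11+2d',n]\setminus(\{n-15\}\cup[n-7,n-1])$, so what must be recovered beyond what Lemma~\ref{high11} already gives is precisely: the single value $n-23$, and the four values $[n-15,n-12]$ minus $\{n-15\}$, i.e.\ $\{n-14,n-13,n-12\}$.

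Next I would check that Lemma~\ref{high15} supplies exactly these missing values. In the $b=15$ description the forbidden block is $[n-19,n-15]$ together with $\{n-29\}$ and $[n-9,n-1]$; in particular $n-23$, $n-14$, $n-13$, $n-12$ are all \emph{present} in $2I''{\Omega}_4^{15}(15)$-shifted range, provided we are inside the guaranteed interval $[2I''+15+2d'',n]$. So the second step is to verify that for $n\geq 45$ the lower endpoint $2I''+15+2d''$ of Lemma~\ref{high15} is at most $n-23$ (equivalently, at most all of the four values we need), which reduces to the inequality $2I''+15+2d''\le n-23$; writing $n=30I''+15+2d''$ this is $2I''\le n-38=30I''-23$, i.e.\ $28I''\ge 23$, true for $I''\ge1$. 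Hence on the overlap the union of the two ranges is all of $[11+2d',n]$ except possibly values forbidden by \emph{both} lemmas, and one checks that the only value forbidden by both (within $[11+2d',n]\setminus[n-7,n-1]$) is $n-15$, giving exactly the claimed set.

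For the second sentence of the theorem, the range $33\le n\le 43$, we have $n<45$ so Lemma~\ref{high15} does not apply (there is no valid $I''\ge1$); here only Lemma~\ref{high11} is available, which is why the weaker exceptional set $[n-15,n-12]\cup[n-7,n-1]$ appears rather than just $\{n-15\}$. In this small range $I'=1$ and $d'=(n-33)/2\in[0,5]$, and Lemma~\ref{high11} directly yields $[11+2d',n]\setminus(\{n-23\}\cup[n-15,n-12]\cup[n-7,n-1])$; one then notes that for these $n$ the value $n-23$ lies below $11+2d'$ (indeed $n-23\le 20$ while $11+2d'=n-22$... so actually $n-23<11+2d'$), so $\{n-23\}$ drops out of the stated range automatically, leaving exactly $[11+2d',n]\setminus([n-15,n-12]\cup[n-7,n-1])$.

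The main obstacle I anticipate is purely bookkeeping: making sure the two forbidden families, after the shifts by $\{d'\}$ and $\{d''\}$ and the substitutions $n=22I'+11+2d'=30I''+15+2d''$, are translated correctly into values of $t$ relative to $n$, and confirming that the overlap of the two guaranteed intervals covers everything claimed (in particular that the lower endpoints $11+2d'$ and $2I''+15+2d''$ are compatible). There is no deep idea here beyond the ``cancel the exceptions with a second base size'' trick; the work is in verifying the handful of inequalities and intersection computations, and in separating out the genuinely small cases $n\le 43$ where the second base size is unavailable.
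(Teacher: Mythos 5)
Your proposal is correct and follows essentially the same route as the paper: take the union of Lemma~\ref{high11} ($b=11$) and Lemma~\ref{high15} ($b=15$), check that the values $\{n-23\}\cup[n-14,n-12]$ missing from the first are supplied by the second via the inequality $2I''+15+2d''\le n-23$ (valid for $I''\ge 1$, i.e.\ $n\ge 45$), and handle $33\le n\le 43$ with Lemma~\ref{high11} alone. Your additional observation that $n-23<11+2d'$ in the small range, so that exception drops out of the stated interval, is exactly the (implicit) reason the paper's second sentence reads as it does.
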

\begin{proof}
Define $I''$ and $d''$ such that $n = 30I''+15+2d''$, $I'' \geq 1$ and $0 \leq d'' < 15$.
This theorem is the union of the result from Lemma \ref{high11} and Lemma \ref{high15}. 
The case for $I'' \geq 1$ requires the knowledge that $\{n-23\}\cup [n-14, n-12] \subseteq [2I'' + 15+2d'', n] \setminus (\{n-29\} \cup [n-19, n-15] \cup [ n-9, n-1])$, which is easily seen as $2I'' + 15+2d'' \leq n-23 = 30I''+15+2d'' -23$ when $I'' \geq 1$. 
Then the union of $[11+2d', n] \setminus (\{n-23\}\cup [n-15, n-12] \cup [n-7, n-1])$ and $[2I'' + 15+2d'', n] \setminus  (\{n-29\} \cup [n-19, n-15] \cup [ n-9, n-1])$ gives the result as stated in the theorem when $n \geq 45$. 
The case for $33 \leq n< 45$ is covered by Lemma \ref{high11}.
\end{proof}

\begin{lemma}
\label{lower22}
For odd $n \geq 33$, let $I'$ and $d'$ be the unique integers such that $n = 22I'+11+2d'$, $I' \geq 1$ and $0 \leq d' < 11$. Then there exist four transversals of $B_n$ that intersect stably in $t$ points for $t\in [3+d', 22I'+3+d'] \setminus (\{22I'-20+d'\}\cup [22I'-12+d', 22I'-9+d']\cup [22I'-4+d', 22I'+2+d'])$.

\end{lemma}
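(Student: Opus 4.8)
The plan is to mimic exactly the strategy used for the analogous $\mu=3$ statement in Lemma \ref{3lower22}, namely to feed the low building block from Lemma \ref{low}(2) into the principal construction of Theorem \ref{mainTheorem} with base size $b=11$. First I would set $b=11$ and note that, since $n=22I'+11+2d'$ with $0\le d'<11$, the integers $I,d$ appearing in Theorem \ref{mainTheorem} for this base size are exactly $I=I'$ and $d=d'$, so that $n=2bI'+b+2d'$. Theorem \ref{mainTheorem} then produces four transversals of $B_n$ intersecting stably in $t$ points whenever $t\in\{d'\}+\Omega_4^{11}(11+d')+2I'\,\Omega_4^{11}(11)$.

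Next I would invoke Lemma \ref{low}(2), which gives $3\in\Omega_4^{11}(11+d')$ for every $0\le d'<11$, to conclude that the singleton shift $\{d'\}+\{3\}+2I'\,\Omega_4^{11}(11)$ is contained in the attainable set; that is, $t$ is attainable for every $t\in\{3+d'\}+2I'\,\Omega_4^{11}(11)$. It then remains only to compute $2I'\,\Omega_4^{11}(11)$ explicitly, and here I would quote Lemma \ref{lemma2I}(2): $2I'\,\Omega_4^{11}(11)=[0,22I']\setminus(\{22I'-23\}\cup[22I'-15,22I'-12]\cup[22I'-7,22I'-1])$. Adding the shift $3+d'$ to every element of this set turns $[0,22I']$ into $[3+d',22I'+3+d']$ and turns the three removed blocks into $\{22I'-20+d'\}$, $[22I'-12+d',22I'-9+d']$ and $[22I'-4+d',22I'+2+d']$ respectively, which is precisely the set claimed in the statement.

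The only point requiring a little care — and the step I would flag as the main (if minor) obstacle — is the bookkeeping verifying that the base-size-$11$ parameters $(I,d)$ coincide with $(I',d')$ and that the hypotheses $I'\ge 1$, $3\le b\le n/3$ of Theorem \ref{mainTheorem} are met for all odd $n\ge 33$ (indeed $b=11\le n/3$ forces $n\ge 33$, which is exactly the stated range). Everything else is the routine arithmetic of translating a finite set by a constant, so no genuinely new idea is needed beyond what already appears in the $\mu=3$ analogue.
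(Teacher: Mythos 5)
Your proposal is correct and follows essentially the same route as the paper: take $b=11$ in Theorem \ref{mainTheorem}, use $3\in\Omega_4^{11}(11+d')$ from Lemma \ref{low} and the sumset computation $2I'\,\Omega_4^{11}(11)$ from Lemma \ref{lemma2I}, and shift by $3+d'$. Your extra bookkeeping (checking $(I,d)=(I',d')$ and $b\le n/3$) is just a more explicit version of what the paper leaves implicit.
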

\begin{proof}
Using  Theorem \ref{mainTheorem}, Lemma \ref{low}, and Lemma \ref{lemma2I}, we can conclude there exists the required set of transversals for each $t\in  \{d'\}+\{3\} + 2I' {\Omega}_4^{11}(11) \subseteq \{d\}+ {\Omega}_4^{11}(11+d') + 2I'  {\Omega}_4^{11}(11) $, and hence for each $t\in\{3+d', \ldots, 22I'+3+d'\} \setminus \{22I'-20+d', 22I'-12+d', \ldots, 22I'-9+d', 22I'-4+d', \ldots, 22I'+2+d'\}$.
\end{proof}

\begin{lemma}
\label{lower18}
For odd $n \geq 27$, let  $I$ and $d$ be the unique integers such that $n = 18I+9+2d$, $I \geq 1$ and $0 \leq d < 9$. Then there exist four transversals of $B_n$ that intersect stably in $t$ points for $t\in [d, \ldots, 18I+d] \setminus \bigcup_{i=1}^{7}[18I-8i+1+d,18I-9i+8+d]$.
\end{lemma}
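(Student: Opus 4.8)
The plan is to mirror the proof of Lemma \ref{3lower18} exactly, but with $\mu=4$ in place of $\mu=3$ and using the $\mu=4$ data already assembled. First I would set the base size to $b=9$; then $I$ and $d$ defined by $n=18I+9+2d$, $I\geq 1$, $0\leq d<9$ are precisely the parameters $I$ and $d$ appearing in Theorem \ref{mainTheorem} for this choice of $b$ (since $2bI+b+2d = 18I+9+2d = n$). By Lemma \ref{low}(1) we have $0\in\Omega^{9}_4(9+d)$ for every $0\leq d<9$, so the large subsquare contributes the value $0$ to the stable intersection count. For each of the $2I$ base subsquares we are free to choose any value in $\Omega^{9}_4(9)$; by Lemma \ref{baseblocks}(1), $\Omega^{9}_4(9)\supseteq\{0,1,9\}$.

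Next I would invoke Theorem \ref{mainTheorem}: it produces four transversals of $B_n$ that intersect stably in $t=d+t_0+\sum_{i=1}^{2I}t_i$ points whenever $t_0\in\Omega^{9}_4(9+d)$ and each $t_i\in\Omega^{9}_4(9)$. Restricting to $t_0=0$ and letting the $t_i$ range freely over $\Omega^{9}_4(9)$ shows that every $t$ in the set $\{d\}+\{0\}+2I\,\Omega^{9}_4(9)$ is achievable; since $\{0\}\subseteq\Omega^{9}_4(9+d)$ this is contained in $\{d\}+\Omega^{9}_4(9+d)+2I\,\Omega^{9}_4(9)$, so all such $t$ arise from genuine transversal families. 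It remains only to evaluate $\{d\}+2I\,\Omega^{9}_4(9)$ as an explicit set. Lemma \ref{lemma2I}(1) gives $2I\,\Omega^{9}_4(9)=[0,18I]\setminus\bigcup_{i=1}^{7}[18I-8i+1,\,18I-9i+8]$, and translating by $d$ yields exactly $[d,18I+d]\setminus\bigcup_{i=1}^{7}[18I-8i+1+d,\,18I-9i+8+d]$, which is the claimed set.

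There is essentially no obstacle here beyond bookkeeping: the construction (Theorem \ref{mainTheorem}) and the two input lemmas (Lemma \ref{low} and Lemma \ref{lemma2I}) do all the work, and the only thing to check is that $\{0\}\subseteq\Omega^{9}_4(9)$ so that the inclusion $\{d\}+\{0\}+2I\,\Omega^{9}_4(9)\subseteq\{d\}+\Omega^{9}_4(9+d)+2I\,\Omega^{9}_4(9)$ is valid — which is immediate from Lemma \ref{baseblocks}(1) — and that $0\in\Omega^{9}_4(9+d)$ for the relevant range of $d$, which is Lemma \ref{low}(1). The mild point to be careful about is that Lemma \ref{lemma2I}(1) is stated only for the even multiplier $2I$ (as noted in its proof, the odd case is not needed), which is exactly the multiplier we need since there are $2I$ base subsquares. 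Thus the proof is a one-line application once the ingredients are cited, in complete parallel with Lemma \ref{3lower18}.
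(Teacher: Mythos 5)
Your proposal is correct and follows essentially the same route as the paper: take $b=9$, use Lemma \ref{low}(1) to put $0\in\Omega^{9}_4(9+d)$, apply Theorem \ref{mainTheorem}, and evaluate $\{d\}+\{0\}+2I\,\Omega^{9}_4(9)$ via Lemma \ref{lemma2I}(1). The only cosmetic remark is that the inclusion you check needs $0\in\Omega^{9}_4(9+d)$ (Lemma \ref{low}), not $0\in\Omega^{9}_4(9)$, but since you cite both, nothing is missing.
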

\begin{proof}
Using Theorem \ref{mainTheorem}, Lemma \ref{low}, and Lemma \ref{lemma2I}, we can conclude there exists the required set of transversals for each $t\in  \{d\}+\{0\} + 2I {\Omega}_4^{9}(9) \subseteq \{d\}+ {\Omega}_4^{9}(9+d) + 2I {\Omega}_4^{9}(9) $, and hence for each $t\in[d, 18I+d] \setminus \bigcup_{i=1}^{7}[18I-8i+1+d,18I-9i+8+d]$.
\end{proof}

\begin{theorem} \label{thm-low}
For odd $n \geq 33$, let  $I$, $I'$, $d$ and $d'$ be the unique integers such that $n= 18I +9 + 2d$ and $n = 22I'+11+2d'$, $I,I' \geq 1$, $0 \leq d < 9$ and $0 \leq d' < 11$. Then there exist four transversals of $B_n$ that intersect stably in $t$ points for $t\in [min(3+d',d), 11+2d']$ except, perhaps, when:
\begin{itemize}
\item $33 \leq n \leq 43$ and $t \in [10+d',11+d']$,
\item $45 \leq n \leq 53$ and $t \in [-1+d', 2+d'] \cup [10+d',11+d'] \cup [18+d', 20+d']$,
\item $63 \leq n \leq 75$ and  $t \in [7+d, 8+d]$.
\end{itemize}
\end{theorem}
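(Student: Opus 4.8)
The plan is to patch together the three lower-range lemmas already established---Lemma \ref{lower22} (base $b=11$, yielding intervals tied to $22I'$), Lemma \ref{lower18} (base $b=9$, yielding intervals tied to $18I$), and a $b=15$ analogue obtained exactly as in Lemma \ref{lower22} but using $3 \in \Omega_4^{15}(15+d'')$ and the set $2I''\Omega_4^{15}(15)$ from Lemma \ref{lemma2I}(3)---over the window $t \in [\min(3+d',d), 11+2d']$. First I would reduce to covering $[3+d', 11+2d']$: the tail $[d, 3+d']$ (relevant only when $d < 3+d'$, i.e. $n \ge 45$, forcing $I \ge 2$) is handled by Lemma \ref{lower18}, whose interval $[d, 18I-48+d]$ already contains all of $[d,3+d']$ once $18I - 48 + d \ge 3+d'$, which holds for $I \ge 3$; the residual small cases $n=45,\ldots,53$ with $I=2$ that are not swept up are exactly what feeds the second bulleted exception.

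Next, for the main window $[3+d', 11+2d']$, I would observe that this window is non-empty only when $d' \ge -8+d'$ always, but has length $8+d'$, so for large $I'$ the single interval $[3+d', 22I'-4+d']$ from Lemma \ref{lower22} already covers everything and we are done; thus the only real work is when $11+2d' > 22I'-4+d'$, i.e. $d' > 22I' - 15$, which (since $d' \le 10$) forces $I' = 1$ and $d' \in \{4,\ldots,10\}$, i.e. $n \in \{41,43,\ldots,53\}$. For each of these finitely many $n$ I would compare the three (or four, including $b=15$) available covered sets from Lemmas \ref{lower22}, \ref{lower18}, and the $b=15$ analogue, compute the union, and read off precisely which $t$ remain uncovered---these are the values $[10+d',11+d']$ (for $33 \le n \le 43$, where Lemma \ref{lower18} is the only relevant tool and its forbidden set $\bigcup_{i=1}^7[18I-8i+1+d,18I-9i+8+d]$ with $I=1$ straddles this pair) and the three intervals listed for $45 \le n \le 53$. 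The case $63 \le n \le 75$ with $t \in [7+d,8+d]$ arises from the $b=9$ construction (Lemma \ref{lower18}) with $I = 3,4$: the gap $[18I-15+d, 18I-14+d] = [7+d,8+d]$ at $I=3$ (resp.\ the analogous one at $I=4$) is not filled by the $b=11$ or $b=15$ constructions because those leave parallel gaps at slightly shifted locations; I would verify by direct comparison of the three forbidden-set formulas that these particular two-element gaps have empty intersection complement.

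The main obstacle is purely bookkeeping: making sure the three forbidden-set formulas---$\bigcup_{i=1}^7[18I-8i+1+d,18I-9i+8+d]$, $\{22I'-20+d'\}\cup[22I'-12+d',22I'-9+d']\cup[22I'-4+d',22I'+2+d']$, and the $b=15$ shift---are correctly aligned against the common coordinate $n = 18I+9+2d = 22I'+11+2d' = 30I''+15+2d''$, so that their intersection (the genuinely uncovered $t$) is computed without off-by-one errors. I expect no conceptual difficulty beyond Lemma \ref{lemmaSet} and Theorem \ref{mainTheorem}; the proof is a finite case check plus three uniform interval-covering arguments, and the stated exception list is simply the honest output of that check. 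I would present it as: (i) dispose of the $[d,3+d']$ tail, (ii) dispose of all $n$ with $I' \ge 2$ by Lemma \ref{lower22} alone, (iii) for the finitely many remaining $n$ take the union of the three lemmas' outputs and tabulate the residue, matching it to the three bullets.
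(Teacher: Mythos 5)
Your overall strategy is the same as the paper's (combine Lemma \ref{lower22} and Lemma \ref{lower18} via Theorem \ref{mainTheorem}, compute the union of the covered sets, and read off the uncovered $t$ as the exceptions), but several concrete steps as written would fail. First, your third tool is unsupported: the paper establishes only $0 \in \Omega_4^{9}(9+d)$ and $3 \in \Omega_4^{11}(11+d')$ in Lemma \ref{low}; there is no result $3 \in \Omega_4^{15}(15+d'')$ anywhere (the $b=15$ appendix tables give only intersections $1,\ldots,5$ at $d''=0$ and the full-intersection designs), so the ``$b=15$ analogue of Lemma \ref{lower22}'' cannot be invoked as ``already established''. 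It is also unnecessary: the paper's proof uses only the $b=9$ and $b=11$ lower lemmas.

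Second, your reduction treats the covered sets as if they were the contiguous intervals $[3+d',22I'-4+d']$ and $[d,18I-48+d]$, which ignores the interior forbidden values. For $I'=1$ Lemma \ref{lower22} forbids $[10+d',13+d']$ (its covered set is $[3+d',9+d']\cup[14+d',17+d']\cup\{25+d'\}$), so your claim that only $n\in\{41,\ldots,53\}$ need further work is false: every $33\le n\le 53$ needs the finite union computation, and in particular for $33\le n\le 39$ your argument would wrongly assert coverage of $[10+d',11+d']$, which is precisely the first exception bullet (also, your own inequality $d'>22I'-15$ gives $d'\in\{8,9,10\}$, not $\{4,\ldots,10\}$). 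Similarly, the first contiguous block of Lemma \ref{lower18} is $[d,18I-56+d]$, not $[d,18I-48+d]$, so the tail $[d,3+d']$ is cleared only for $I\ge 4$; at $I=3$ the gap $[18I-47+d,18I-46+d]=[7+d,8+d]$ (the $i=6$ interval, not your $[18I-15+d,18I-14+d]$) sits inside the tail $[d,7+d]$ and is exactly the third exception, while at $I\ge 4$ there is no analogous exception (contrary to your ``$I=3,4$''). The correct case split, as in the paper, is: $33\le n\le 43$, $45\le n\le 53$, and $n\ge 55$ (with the sub-split $I=3$ versus $I\ge 4$ for the tail), each handled by an explicit union of the two lemmas' covered sets.
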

\begin{proof}
For $33 \leq n \leq 43$, we have $I'=I=1$ and $min(3+d',d) = 3+d' = d$. 
Then we have the existance of four transversal of $B_n$ that intersect stably in $t$ points by Lemma \ref{lower22} for $t\in [3+d', 25+d'] \setminus ([10+d', 13+d'] \cup [18+d', 24+d'])$ and by Lemma \ref{lower18} for $t\in [d, 18+d] \setminus ([3+d, 8+d] \cup [11+d, 17+d]) = [3+d', 21+d'] \setminus ([6+d', 11+d'] \cup [14+d', 20+d'])$. 
The union of the two result sets is $[3+d', 25+d'] \setminus ([10+d', 11+d'] \cup [18+d', 20+d'] \cup [22+d', 24+d'])$. 
A subset of this is $[3+d', 16+d'] \setminus [10+d', 11+d']$. 
Noting that as $d' \leq 5$ for the specified $n$, then $ 11+2d' \leq 16+d'$, and so this subset includes the range $ [min(3+d',d), 11+2d']\setminus [10+d', 11+d']$, which is the required result when $33 \leq n \leq 43$.

For $45 \leq n \leq 53$, we have $I=2$, $I'=1$ and as $d' = 6+d$ we have $min(3+d',d)  = d$. 
Then we have the existence of four transversal of $B_n$ that intersect stably in $t$ points by Lemma \ref{lower22} for $t\in [3+d', 25+d'] \setminus ([10+d', 13+d'] \cup [18+d', 24+d'])$
and by Lemma \ref{lower18} for $[d, 36+d] \setminus ([5+d, 8+d] \cup [13+d, 17+d] \cup [21+d, 26+d] \cup [29+d, 35+d]) = [-6+d', 30+d'] \setminus ([-1+d', 2+d'] \cup [7+d', 11+d'] \cup [15+d', 20+d'] \cup [23+d', 29+d'])$. 
The union of the two result sets is $[d, \ldots, 30+d'] \setminus ([-1+d', 2+d']\cup [10+d', 11+d'] \cup [18+d', 20+d'] \cup [23+d', 24+d'] \cup [26+d',29+d'])$. 
A subset of this is $[d, 21+d'] \setminus ([-1+d', 2+d']\cup [10+d', 11+d'] \cup [18+d', 20+d'])$. 
Noting that as $d' \leq 10$, then $ 11+2d' \leq 21+d'$, and so this subset includes the range $ [min(3+d',d), 11+2d']\setminus ([-1+d', 2+d']\cup [10+d', 11+d'] \cup [18+d', 20+d'])$, which is the required result when $45 \leq n \leq 53$.

For $n \geq 55$, we have $I' \geq 2$. 
Then we have the existence of four transversal of $B_n$ that intersect stably in $t$ points by Lemma \ref{lower22} for $t\in [3+d', 21+d']$. 
This completes the case when $min(3+d',d)=3+d'$, as $11+2d' \leq 21+d'$. 
When $d<3+d'$, we still need the cases $t\in [d, \ldots, 3+d']$. 
As $3+d' \leq 13$, it is enough to show the statement holds for those $t$ with $t \in [d, 13]$.

When $I=3$, then $63 \leq n \leq 75$, and we have the existance of four transversal of $B_n$ that intersect stably in $t$ points by Lemma \ref{lower18} for $t\in [d, 13] \setminus [7+d,8+d]$. When $I\geq 4$, we have the existence of four transversal of $B_n$ that intersect stably in $t$ points by Lemma \ref{lower18} for $t\in [d, 13]$.
\end{proof}

Then Theorem \ref{overall4} follows by Theorem \ref{thm-high} and Theorem \ref{thm-low}.

\section{Application to latin trades}

Let $D$ represent a combinatorial design and assume there exists distinct sets $S_1,S_2$ with $S_1 \subseteq D$, such that $D' =(D \setminus S_1) \cup S_2$ forms a valid design. 
Then the pair $(S_1,S_2)$ forms a \emph{combinatorial bitrade}. The original design $D$ is immaterial, and we can define a bitrade formally by taking the pair of sub-designs $(S_1,S_2)$ that fulfill certain properties. If our combinatorial design is a latin square, the bitrade is called a latin bitrade. A good survey of latin bitrades is \cite{CavSurvey}, and for trades in general is \cite{TradesSurvey}.

 \begin{definition}
\label{def:trade}
A \emph{$\mu$-way latin trade} of volume $s$ and order $n$ is a collection of $\mu$ partial latin squares $(L_1, \ldots, L_{\mu})$, each of order $n$, such that:
\begin{enumerate}
\item Each partial latin square contains exactly the same $s$ filled cells,
\item If cell $(i,j)$ is filled then it contains a different entry in each of the $\mu$ partial latin squares,
\item Row $i$ in each of the $\mu$ partial latin squares contains, set-wise, the same symbols, and column $j$ likewise.
\end{enumerate}
\end{definition}

A $\mu$-way latin trade is \emph{circulant} if each of the partial latin squares can be obtained from the first row by simultaneously cycling the rows, columns, and symbols. 
For example, the cell $(r,c,e)\in L$ would imply $(r+1,c+1,e+1)\in L$. 
We call the set of first rows the \emph{base row}, and can write it in the notation $B=\{(e_1, \ldots, e_{\mu})_{c_j} \mid 1 \leq j \leq k\}$, where $(0,c_j,e_{\alpha}) \in L_{\alpha}$ for $1 \leq \alpha \leq \mu$.

A $\mu$-way latin trade is \emph{$k$-homogeneous} if in each partial latin square, $L$, each row and each column contain $k$ filled cells, and each symbol appears in filled cells of $L$ precisely $k$ times.
Clearly a circulant $\mu$-way trade is $k$-homogeneous, where $k$ is the number of filled cells in the first row.

There has been much interest in $2$-way $k$-homogeneous latin trades as demonstrated by the work in \cite{khomo}, \cite{Exiskhomo}, \cite{cav3-homo}, \cite{3-homo}, \cite{4-homo}, and \cite{Perm34Homo}, and more recently there has been an extension to  $\mu$-way $k$-homogeneous latin trades in \cite{3way}.

\begin{theorem} \label{trans2trade}
If there exists a set of $\mu$ transversals of $B_n$ that intersect stably in $t$ points, then there exists a circulant $\mu$-way $(n-t)$-homogeneous latin trade of order $n$.
\end{theorem}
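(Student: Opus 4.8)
The plan is to build the $\mu$-way latin trade directly from the $\mu$ transversals by a standard "transversal minus common part" construction, then verify the three defining properties of Definition~\ref{def:trade} together with circulance. First I would let $T_1,\dots,T_\mu$ be the given transversals of $B_n$ intersecting stably in the set $S$ with $|S|=t$, and define $L_\alpha := T_\alpha \setminus S$ for $1 \le \alpha \le \mu$. Each $T_\alpha$ is a diagonal, so it has exactly one cell in each row and each column; removing the $t$ cells of $S$ (which occupy the same $t$ rows and $t$ columns in every $T_\alpha$, since $S \subseteq T_\alpha$ for all $\alpha$) leaves a partial latin square $L_\alpha$ with a filled cell in exactly the same $n-t$ rows and $n-t$ columns, independent of $\alpha$. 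This immediately gives property~(1): all $L_\alpha$ have the same $s=n-t$ filled cells (cell-positions), namely the rows/columns not hit by $S$.

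Next I would check property~(2). If cell $(i,j)$ is filled in the $L_\alpha$, then $(i,j) \notin S$, so for $\alpha \ne \beta$ the triples $(i,j,e_\alpha) \in T_\alpha$ and $(i,j,e_\beta) \in T_\beta$ cannot be equal (equal triples in $T_\alpha \cap T_\beta$ would lie in $S$ by the stable-intersection hypothesis $(T_\alpha \cap T_\beta)\setminus S = \emptyset$); hence $e_\alpha \ne e_\beta$, so the entry differs across the $\mu$ partial squares. For property~(3), I would use that each $T_\alpha$ is a transversal: in $B_n$, row $i$ of $T_\alpha$ carries symbol $c_\alpha(i) + i$ where $c_\alpha(i)$ is the unique column of $T_\alpha$ in row $i$; since $T_\alpha$ hits all $n$ symbols, the multiset of symbols appearing in row $i$ across... no — more carefully, for a fixed row $i$ the set of symbols used by $L_\alpha$ in that row is a singleton, so I must instead argue column-wise and symbol-wise at the level of the whole square, or better, observe that $T_\alpha$ and $T_\beta$ differ only by the permutation relabelling that swaps the non-common cells. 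The clean way: the symmetric difference of $T_\alpha$ and $T_\beta$ (as sets of cells) is empty — they occupy the same diagonal of cells? They need not. So instead I would argue that since both $T_\alpha$ and $T_\beta$ are transversals, $T_\beta$ is obtained from $T_\alpha$ by applying some permutation of the symbols (namely the one induced by relabelling), and this permutation fixes the symbols of $S$; restricting to $L_\alpha \to L_\beta$ shows each row of $L_\alpha$ and $L_\beta$ contains the same symbol-set and each column likewise. This is the step I expect to be the main obstacle, because it requires carefully extracting the row/column symbol-balance from "transversal" rather than just "diagonal," and dealing with the fact that $T_\alpha, T_\beta$ need not cover the same cells.

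Finally I would establish circulance and homogeneity. The key point is that a transversal of $B_n$ can be written as $\{(i, \pi(i), i+\pi(i)) \mid i \in [0,n-1]\}$ for a suitable permutation $\pi$, and $B_n$ itself is invariant under the simultaneous shift $(r,c,e)\mapsto(r+1,c+1,e+1)$. One cannot assume the given transversals are themselves shift-invariant, so instead I would pass to the trade obtained above and note that a latin trade, being defined up to the ambient square, can be "cyclically completed": take the base row of $(L_1,\dots,L_\mu)$ — more precisely, produce the circulant trade whose base row records, for each column $c$ with a filled cell in row $0$, the $\mu$-tuple of symbols there — and invoke the standard fact (cf.\ the remark after Definition~\ref{def:trade}) that a circulant $\mu$-way trade with $k$ cells in its base row is automatically $k$-homogeneous. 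Since $L_\alpha$ has $n-t$ filled cells total spread one-per-row by construction, the base row has $k = n-t$ entries, yielding a circulant $\mu$-way $(n-t)$-homogeneous latin trade of order $n$. I would close by remarking that properties~(1)--(3) for the full square, already verified, descend to the first row and hence define a valid circulant trade, completing the proof.
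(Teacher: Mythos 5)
Your construction has a genuine gap at the very first step. Setting $L_\alpha := T_\alpha \setminus S$ does \emph{not} give partial latin squares with the same filled cells: stable intersection forces the $T_\alpha$ to use the same \emph{rows}, \emph{columns} and \emph{symbols} outside $S$, but not the same cells. The paper's own $B_5$ example shows this: off $S=\{(0,0,0)\}$ one transversal occupies cells $(1,1),(2,2),(3,3),(4,4)$ and the other occupies $(1,2),(2,4),(3,1),(4,3)$. So property (1) of Definition~\ref{def:trade} fails outright, property (2) is not even well-posed, and property (3) also fails for this choice (row $i$ of $T_\alpha\setminus S$ contains a single symbol, and these single symbols differ between transversals --- there is no symbol permutation fixing $S$ that carries $T_\alpha$ to $T_\beta$ row by row, which is exactly the obstacle you flagged but did not resolve). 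The attempted rescue by ``cyclic completion from the base row of $(L_1,\dots,L_\mu)$'' does not repair this: row $0$ of each $L_\alpha$ contains at most one filled cell (a transversal has one cell per row), and these cells can lie in different columns for different $\alpha$, so there is no common base row with $n-t$ entries to develop.

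The missing idea is to project each leftover cell to row $0$ \emph{keeping its column and its symbol}, and only then develop cyclically: from $(r,c,r+c)\in T_\alpha\setminus S$ form the base-row cell $(0,c,r+c)$, i.e.\ take $Q_\alpha=\{(i,c+i,r+c+i)\mid 0\le i\le n-1,\ (r,c,r+c)\in T_\alpha\setminus S\}$, which is the paper's construction. Because every transversal meets each column and each symbol exactly once and $S$ is common to all of them, the columns and the symbols used off $S$ are the same for every $\alpha$; hence the $Q_\alpha$ share the same $n-t$ base-row cells (property 1), corresponding rows and columns carry setwise the same symbols (property 3, the column case seen via the cells $(-c,0,r)$), and at a common column the rows $r_\alpha$ are pairwise distinct by stability, so the entries $r_\alpha+c$ differ (property 2). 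Circulance is built in, and $n-t$ cells in the base row give $(n-t)$-homogeneity. Your final appeal to ``circulant trades with $k$ base-row cells are $k$-homogeneous'' is fine, but it only becomes available once the base row is constructed this way rather than from $T_\alpha\setminus S$ itself.
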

\begin{proof}
Consider a  set of $\mu$ transversals of $B_n$, $T_1, \ldots, T_{\mu}$, that intersect stably in the $t$ points $S$. 
Consider the partial latin squares $Q_\alpha = \{(i,c+i, r+c+i) \mid 0 \leq i \leq n-1 \text{ and } (r,c,r+c) \in T_\alpha \setminus S\}$. 
It is clear that each corresponding row of the $Q_\alpha$ contain setwise the same symbols. As the cells of the first column of $Q_\alpha$ are $(-c,0,r) \in Q_\alpha$, each column contain setwise the same symbols. 
Then it is clear that the collection of $\mu$ partial latin squares satisfy the conditions of a $\mu$-way latin trade. 
They are also circulant by definition, and hence are clearly $(n-t)$-homogeneous.
\end{proof}

\begin{example}

Consider $B_5$ with the following transversals:

 \begin{center}
  \begin{tabular}{ |c | c | c| c | c | c|  }
    \hline
$\underline{0}^*$&$1$&$2$&$3$&$4$\\
          \hline
$1$&$\underline{2}$&$3^*$&$4$&$0$\\
          \hline
${2}$&$3$&$\underline{4}$&$0$&$1^*$\\
          \hline
$3$&$4^*$&$0$&$\underline{1}$&$2$\\
          \hline
$4$&$0$&$1$&$2^*$&$\underline{3}$\\
          \hline
  \end{tabular}
\end{center}

Here, the transversals intersect stably in the $1$ point $ S=\{(0,0,0)\}$.
The cell $(1,2,3)$ is in the starred transversal, and not in $S$, so Construction \ref{trans2trade} places the cell $(0,2,3)$ into the resulting first row of a circulant latin square.
Construction \ref{trans2trade} gives the first row of a circulant latin squares to be:

 \begin{center}
  \begin{tabular}{ |c | c | c| c | c | c|  }
    \hline
$\cdot$&$4$&$3$&$2$&$1$\\
          \hline
  \end{tabular} \hspace{10pt}
  \begin{tabular}{ |c | c | c| c | c | c|  }
    \hline
$\cdot$&$2$&$4$&$1$&$3$\\
          \hline
  \end{tabular}

\end{center}

Writing these latin squares out completely:

 \begin{center}
  \begin{tabular}{ |c | c | c| c | c | c|  }
    \hline
$\cdot$&$4$&$3$&$2$&$1$\\
          \hline
$2$&$\cdot$&$0$&$4$&$3$\\
          \hline
$4$&$3$&$\cdot$&$1$&$0$\\
          \hline
$1$&$0$&$4$&$\cdot$&$2$\\
          \hline
$3$&$2$&$1$&$0$&$\cdot$\\
          \hline
  \end{tabular} \hspace{10pt}
  \begin{tabular}{ |c | c | c| c | c | c|  }
    \hline
$\cdot$&$2$&$4$&$1$&$3$\\
          \hline
$4$&$\cdot$&$3$&$0$&$2$\\
          \hline
$3$&$0$&$\cdot$&$4$&$1$\\
          \hline
$2$&$4$&$1$&$\cdot$&$0$\\
          \hline
$1$&$3$&$0$&$2$&$\cdot$\\
          \hline
  \end{tabular}
\end{center}

The two partial latin squares form a $2$-way $4$-homogeneous circulant latin trade of order $5$. 
This completes the example.

\end{example}

\begin{theorem} \label{trades3}
For odd $n \geq 33$, let $I$, $I'$, $d$ and $d'$ be the unique integers such that $n= 18I +9 + 2d$ and $n = 22I'+11+2d'$, $I,I' \geq 1$, $0 \leq d < 9$ and $0 \leq d' < 11$. Then there exists a cyclic $(n-t)$-homogeneous $3$-way latin trade of order $n$, for $t\in [min(3+d',d),  n] \setminus [n-5, n-1]$, except, perhaps, when:

\begin{itemize}
\item $n=51$ and $t=29$,
\item $n=53$ and $t=30$.
\end{itemize}
\end{theorem}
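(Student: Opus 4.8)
The plan is to combine Theorem~\ref{overall3}, which gives the existence of three transversals of $B_n$ that intersect stably in $t$ points for precisely the claimed range of $t$, with Theorem~\ref{trans2trade}, which converts such a family of transversals into a circulant $\mu$-way $(n-t)$-homogeneous latin trade. Since Theorem~\ref{overall3} already encodes exactly the same hypotheses on $n$, $I$, $I'$, $d$, $d'$ and the same exceptional cases $(n,t)=(51,29)$ and $(53,30)$, essentially no new combinatorial work is needed; the proof is a one-line application of these two results with $\mu=3$.

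Concretely, I would first invoke Theorem~\ref{overall3}: for odd $n\geq 33$ with the stated parametrisation, there exist three transversals of $B_n$ that intersect stably in $t$ points for every $t\in[\min(3+d',d),n]\setminus[n-5,n-1]$ with the two listed exceptions. Then for each such $t$ I would apply Theorem~\ref{trans2trade} with $\mu=3$: the existence of three transversals of $B_n$ meeting stably in $t$ points yields a circulant $3$-way $(n-t)$-homogeneous latin trade of order $n$. Since $t$ ranges over the stated set, $n-t$ ranges over the corresponding set of homogeneity parameters, and the exceptions transfer verbatim. One minor bookkeeping point is that the statement phrases the conclusion in terms of $t$ rather than $n-t$, so I would note explicitly that the existence of the trade is indexed by the same $t$ (the homogeneity of the produced trade being $n-t$), so the index set in the theorem statement is literally the index set supplied by Theorem~\ref{overall3}.

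There is essentially no obstacle here: both ingredients are already established earlier in the paper, and the deduction is immediate. The only thing to be careful about is that Theorem~\ref{trans2trade} is stated for general $\mu$, so specialising to $\mu=3$ is harmless, and that the word ``cyclic'' in the statement of this theorem is synonymous with the word ``circulant'' used in Theorem~\ref{trans2trade}. Thus the proof reads: \emph{By Theorem~\ref{overall3} there exist three transversals of $B_n$ that intersect stably in $t$ points for each $t$ in the stated range (excluding the two listed cases); applying Theorem~\ref{trans2trade} with $\mu=3$ to each such family produces a circulant (equivalently cyclic) $3$-way $(n-t)$-homogeneous latin trade of order $n$, as required.}
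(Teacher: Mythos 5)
Your proposal is correct and is exactly the paper's proof: the paper derives Theorem~\ref{trades3} by citing Theorem~\ref{overall3} together with Theorem~\ref{trans2trade} (with $\mu=3$), just as you do. The additional bookkeeping remarks you make (indexing by $t$ versus $n-t$, ``cyclic'' meaning ``circulant'') are harmless and the deduction is immediate.
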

\begin{proof}
Follows by Theorem \ref{overall3} and Theorem \ref{trans2trade}.
\end{proof}

\begin{theorem}\label{trades4}
For odd $n \geq 33$, let $I$, $I'$, $d$ and $d'$ be the unique integers such that $n= 18I +9 + 2d$ and $n = 22I'+11+2d'$, $I,I' \geq 1$, $0 \leq d < 9$ and $0 \leq d' < 11$. Then there exists a cyclic $(n-t)$-homogeneous $4$-way latin trade of order $n$, for $t\in [min(3+d',d),  n] \setminus (\{n-15\} \cup [n-7, n-1])$, except, perhaps, when:
\begin{itemize}
\item $33 \leq n \leq 43$ and $t \in [10+d',11+d'] \cup [ n-14, n-12]$,
\item $45 \leq n \leq 53$ and $t \in [-1+d', 2+d'] \cup [10+d',11+d'] \cup [18+d', 20+d']$,
\item $63 \leq n \leq 75$ and  $t \in [7+d, 8+d]$.

\end{itemize}
\end{theorem}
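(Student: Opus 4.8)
The plan is to derive Theorem \ref{trades4} as an immediate corollary of the two earlier results Theorem \ref{overall4} and Theorem \ref{trans2trade}, exactly parallel to the one-line deduction of Theorem \ref{trades3}. First I would invoke Theorem \ref{overall4} with the same hypotheses on $n$ (odd, $n \geq 33$) and the same definitions of the integers $I, I', d, d'$. This yields, for every $t$ in the stated range $[\min(3+d',d), n] \setminus (\{n-15\} \cup [n-7,n-1])$ and outside the three listed families of possible exceptions, a set of four transversals $T_1, T_2, T_3, T_4$ of $B_n$ that intersect stably in $t$ points.

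Next I would feed this configuration into Theorem \ref{trans2trade} specialised to $\mu = 4$: that theorem turns a stably-intersecting family of $\mu$ transversals of $B_n$ (intersecting in $t$ points) into a circulant $\mu$-way $(n-t)$-homogeneous latin trade of order $n$. Applying it therefore produces, for each admissible $t$, a circulant (equivalently, cyclic — the two terms are used synonymously here) $4$-way $(n-t)$-homogeneous latin trade of order $n$. Since the admissible range of $t$ and the list of possible exceptions are inherited verbatim from Theorem \ref{overall4}, the resulting statement about $(n-t)$-homogeneous trades is exactly the one asserted in Theorem \ref{trades4}.

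Since both ingredients are already in hand, there is essentially no obstacle; the only points needing care are cosmetic. One should confirm that the exceptional sets carried over from Theorem \ref{overall4} match those written in Theorem \ref{trades4} (they do, as stated), and that ``cyclic'' in the conclusion is read as ``circulant'' in the sense of Definition \ref{def:trade} and Theorem \ref{trans2trade}. No base designs, computer searches, or applications of Lemmas \ref{baseblocks}--\ref{lemma2I} are needed at this final step: all of that work was already absorbed into the proof of Theorem \ref{overall4}.
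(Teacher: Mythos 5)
Your proposal is correct and follows exactly the paper's own proof, which simply cites Theorem \ref{overall4} together with Theorem \ref{trans2trade} (with $\mu=4$). Nothing further is needed.
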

\begin{proof}
Follows by Theorem \ref{overall4} and Theorem \ref{trans2trade}.
\end{proof}

\section{Conclusion and future work}

We have been able to show, with a number of exceptions, that their exists three (resp. four) transversals of $B_n$ that intersect stably in $t$ points when $n$ is odd and $n \geq 33$. 
With few unsolved cases left, it appears that future work may be able to answer \ref{Qn:main} completely for $\mu=3,4$. 
%
%
%
%

Theorem \ref{trades3} and \ref{trades4} fill in a large portion of the spectrum of $3$/$4$-way $k$-homogeneous latin trades of odd order, which is a significant advancement on what was previously known.  
There are a number of construction for $\mu$-way $k$-homogeneous latin trades \cite{3way}, and it seems that further work may result in the spectrum being completed.

\section{Acknowledgments}

We would like to acknowledge Ian Wanless for suggesting Lemma \ref{lem:wanlessIdea}, and Diane Donovan for proof-reading this document and providing valuable feedback.

\newpage 
\appendix

\section{Base $b=9$}

\begin{table}[!h]
\begin{center}
  \begin{tabular}{ |c | c | c|  }
    \hline
$b+d$&intersect&result\\ \hline

\hline
9&0&
8 6 7 2 0 1 5 3 4 
\\ \cline{3-3} 
&&7 8 6 1 2 0 4 5 3 
\\ \cline{3-3} 
&&5 3 4 8 6 7 2 0 1 
\\ \cline{3-3} 
&&4 5 3 7 8 6 1 2 0 
\\ \hline

9&1&
7 8 6 2 0 1 4 5 3 
\\ \cline{3-3} 
&&8 5 3 7 0 6 1 2 4 
\\ \cline{3-3} 
&&6 4 5 8 0 7 3 1 2 
\\ \cline{3-3} 
&&5 7 8 3 0 2 6 4 1 
\\ \hline

10&0&

8 9 5 2 0 1 6 7 3 4 
 \\ \cline{3-3}
&&7 5 8 1 9 0 2 4 6 3 
 \\ \cline{3-3}
&&6 4 2 9 7 3 8 0 5 1 
 \\ \cline{3-3}
&&5 3 4 7 8 9 1 6 0 2 
\\ \hline

11&0&
10 7 5 2 0 1 9 6 8 3 4 
 \\ \cline{3-3}
&&7 9 6 1 2 0 10 8 4 5 3 
 \\ \cline{3-3}
&&8 4 2 10 3 9 0 5 7 1 6 
 \\ \cline{3-3}
&&6 3 10 7 1 2 8 9 0 4 5 
\\ \hline

12&0&
10 7 5 2 0 1 9 11 8 3 4 6 
\\ \cline{3-3}
&&7 11 6 1 2 0 10 3 9 5 8 4 
\\ \cline{3-3}
&&8 9 2 4 1 11 0 5 10 6 7 3 
\\ \cline{3-3}
&&5 3 4 11 6 10 1 9 0 8 2 7 
\\ \hline

13&0&
12 7 5 2 0 1 11 3 8 10 4 9 6 
\\ \cline{3-3}
&&8 6 12 1 2 0 4 11 9 3 10 5 7 
\\ \cline{3-3}
&&10 11 2 4 1 3 0 7 12 9 6 8 5 
\\ \cline{3-3}
&&5 3 4 11 12 7 2 0 10 1 9 6 8
\\ \hline
14&0&
12 7 5 2 0 1 13 3 8 11 4 10 6 9 
\\ \cline{3-3}
&&8 6 12 1 2 0 4 11 13 3 9 5 10 7 
\\ \cline{3-3}
&&10 13 2 4 1 3 0 5 12 9 6 11 7 8 
\\ \cline{3-3}
&&5 3 4 7 12 9 2 0 10 13 11 1 8 6 
\\ \hline

15&0&
14 7 5 2 0 1 12 3 4 13 6 10 8 11 9 
\\ \cline{3-3}
&&8 6 14 1 2 0 4 7 13 3 12 9 11 5 10 
\\ \cline{3-3}
&&12 9 2 4 1 3 0 13 8 5 14 11 6 10 7 
\\ \cline{3-3}
&&5 3 4 9 10 11 2 0 14 1 13 7 12 8 6 
\\ \hline

16&0&
14 7 5 2 0 1 12 3 4 15 6 11 8 13 9 10 
\\ \cline{3-3}
&&8 6 14 1 2 0 4 7 15 3 12 9 13 5 10 11 
\\ \cline{3-3}
&&12 9 2 4 1 3 0 13 8 5 14 15 6 10 11 7 
\\ \cline{3-3}
&&5 3 4 11 12 7 2 0 10 1 15 13 14 9 6 8 
\\ \hline

\end{tabular}
\caption{$\mu=4$ and $b=9$.}
  \label{tab:tab4-9a}
\end{center}
\end{table}

\begin{table}[h!]
\begin{center}
  \begin{tabular}{ |c | c | c|  }
    \hline
$b+d$&intersect&result\\ \hline

17&0&
16 7 5 2 0 1 8 3 4 15 12 9 6 13 14 10 11 
\\ \cline{3-3}
&&8 6 16 1 2 0 4 7 14 3 10 5 15 11 12 13 9 
\\ \cline{3-3}
&&14 9 2 4 1 3 0 5 16 13 6 7 8 15 11 12 10 
\\ \cline{3-3}
&&5 3 4 15 10 7 2 0 8 1 14 16 13 9 6 11 12 
\\ \hline

\end{tabular}
\caption{$\mu=4$ and $b=9$.}
  \label{tab:tab4-9b}
\end{center}
\end{table}

\begin{table}[!h]
\begin{center}
  \begin{tabular}{ |c | c | c|  }
\hline
$b+d$&intersect&result\\ \hline

9&2&
8 6 7 2 0 1 5 3 4 
\\ \cline{3-3} 
&&7 5 8 2 0 3 6 4 1 
\\ \cline{3-3} 
&&6 8 5 2 0 7 4 1 3
\\ \hline
 
9&3&
7 5 8 2 0 3 6 4 1 
\\ \cline{3-3} 
&&6 3 8 2 4 7 5 0 1 
\\ \cline{3-3} 
&&4 7 8 2 3 6 0 5 1 
\\ \hline
 \end{tabular}
\caption{$\mu=3$ and $b=9$.}
 \label{tab:tab3-9a}
\end{center}
\end{table}

\newpage

\section{Base $b=11$}

\begin{table}[!h]
\begin{center}
  \begin{tabular}{ |c | c | c|  }
    \hline
$b+d$&intersect&result\\ \hline

11&0&
10 8 9 5 2 0 1 6 7 3 4 
\\ \cline{3-3}
&&9 10 6 7 1 2 0 8 4 5 3 
\\ \cline{3-3}
&&8 9 10 2 7 1 3 0 6 4 5 
\\ \cline{3-3}
&&7 5 3 10 8 9 4 2 0 6 1
\\ \hline

11&1&
10 8 9 5 2 0 1 6 7 3 4 
\\ \cline{3-3}
&&9 7 5 3 10 0 6 8 2 4 1 
\\ \cline{3-3}
&&8 9 4 10 3 0 5 7 1 6 2
\\ \cline{3-3} 
&&7 5 10 8 4 0 9 2 6 1 3
\\ \hline

11&2& 
10 8 9 5 2 0 1 6 7 3 4
\\ \cline{3-3} 
&&9 7 10 4 2 0 5 8 6 1 3
\\ \cline{3-3} 
&&8 9 5 10 2 0 3 7 4 6 1
\\ \cline{3-3} 
&&7 10 6 9 2 0 8 3 1 4 5
\\ \hline 
11&3&
8 9 10 4 2 0 5 7 1 6 3
\\ \cline{3-3} 
&&8 10 5 2 9 7 0 3 1 6 4
\\ \cline{3-3} 
&&8 5 3 10 7 9 4 0 1 6 2
\\ \cline{3-3} 
&&8 4 9 3 10 2 7 5 1 6 0
\\ \hline

  \end{tabular}
\caption{$\mu=4$ and $b=11$.}
  \label{tab:tab4-11a}
\end{center}
\end{table}

\begin{table}[!h]
\begin{center}
  \begin{tabular}{ |c | c | c|  }
    \hline
$b+d$&intersect&result\\ \hline

12&3&
9 11 6 7 2 0 1 10 8 5 3 4 
\\ \cline{3-3} 
&&8 5 7 2 9 11 1 10 6 3 0 4 
\\ \cline{3-3} 
&&6 9 3 11 5 8 1 10 0 7 2 4 
\\ \cline{3-3} 
&&5 7 11 3 8 9 1 10 2 0 6 4
\\ \hline

13&3&
12 9 7 5 2 0 1 10 11 6 8 3 4 
\\ \cline{3-3} 
&&7 9 6 3 1 12 10 2 11 5 8 4 0 
\\ \cline{3-3} 
&&6 9 12 2 4 10 3 0 11 7 8 1 5 
\\ \cline{3-3} 
&&5 9 10 4 12 1 2 7 11 0 8 6 3
\\ \hline

14&3&
12 13 7 5 2 0 1 3 11 9 10 6 4 8 
\\ \cline{3-3} 
&&10 13 6 4 2 0 11 12 1 3 8 5 9 7 
\\ \cline{3-3} 
&&8 13 5 9 2 0 3 11 12 1 6 10 7 4 
\\ \cline{3-3} 
&&7 13 8 6 2 0 12 1 4 10 11 9 5 3 
\\ \hline

15&3&
14 11 7 5 2 0 1 3 12 13 8 10 4 6 9 
\\ \cline{3-3} 
&&8 9 14 6 2 0 1 13 11 3 4 12 10 5 7 
\\ \cline{3-3} 
&&10 8 6 11 2 0 1 5 14 12 13 9 7 3 4 
\\ \cline{3-3} 
&&9 7 8 13 2 0 1 14 4 11 12 3 6 10 5
\\ \hline

16&3& 
14 15 7 5 2 0 1 3 4 13 11 9 6 12 10 8 
\\ \cline{3-3} 
&&12 8 6 7 2 0 1 15 13 14 10 3 4 5 11 9 
\\ \cline{3-3} 
&&9 7 8 13 2 0 1 14 6 3 15 12 10 11 4 5 
\\ \cline{3-3} 
&&8 9 14 6 2 0 1 5 12 15 4 11 13 10 7 3 
\\ \hline

17&3&
16 13 7 5 2 0 1 3 4 14 15 9 6 11 8 12 10 
\\ \cline{3-3} 
&&12 8 6 7 2 0 1 16 14 15 10 3 4 5 13 11 9 
\\ \cline{3-3} 
&&9 7 8 15 2 0 1 13 16 3 4 5 11 14 12 10 6 
\\ \cline{3-3} 
&&8 9 12 6 2 0 1 5 15 7 14 16 10 13 11 3 4
\\ \hline

18&3&
16 13 7 5 2 0 1 3 4 17 15 9 6 11 8 14 12 10 
\\ \cline{3-3} 
&&12 8 6 7 2 0 1 17 14 16 10 3 4 5 15 13 11 9 
\\ \cline{3-3} 
&&9 7 8 17 2 0 1 11 16 3 4 5 15 13 14 10 6 12 
\\ \cline{3-3} 
&&8 9 12 6 2 0 1 5 10 15 16 14 17 3 13 7 4 11 
\\ \hline

19&3&
18 13 7 5 2 0 1 3 4 17 6 16 8 9 10 14 15 11 12 
\\ \cline{3-3} 
&&12 8 6 7 2 0 1 15 16 18 10 3 4 5 17 13 14 9 11 
\\ \cline{3-3} 
&&9 7 8 17 2 0 1 11 18 3 4 5 12 16 13 15 6 14 10 
\\ \cline{3-3} 
&&8 9 12 6 2 0 1 5 14 15 18 7 17 3 16 11 4 10 13
\\ \hline
20&3&
18 13 7 5 2 0 1 3 4 19 6 15 8 9 10 17 14 16 11 12 
\\ \cline{3-3} 
&&12 8 6 7 2 0 1 19 14 11 18 3 4 5 17 9 16 13 15 10 
\\ \cline{3-3} 
&&9 7 8 17 2 0 1 15 16 3 4 5 6 19 12 18 13 14 10 11 
\\ \cline{3-3} 
&&8 9 12 6 2 0 1 5 18 15 10 7 19 3 16 14 17 11 4 13 
\\ \hline

21&3&
20 13 7 5 2 0 1 3 4 9 6 19 16 11 8 18 10 17 14 12 15 
\\ \cline{3-3} 
&&12 8 6 7 2 0 1 19 20 13 10 3 4 5 18 9 15 16 17 11 14 
\\ \cline{3-3} 
&&9 7 8 19 2 0 1 17 12 3 4 5 6 18 20 11 14 15 10 16 13 
\\ \cline{3-3} 
&&8 9 12 6 2 0 1 5 14 11 18 15 19 3 4 20 17 7 16 13 10 
\\ \hline
  \end{tabular}
\caption{$\mu=4$ and $b=11$.}
  \label{tab:tab4-11b}
\end{center}
\end{table}

\begin{table}[!h]
\begin{center}
  \begin{tabular}{ |c | c | c|  }
\hline
$b+d$&intersect&result\\ \hline

\hline
11&4&
9 10 8 5 2 0 1 6 7 3 4 
\\ \cline{3-3} 
&&10 6 7 8 2 0 9 1 5 3 4 
\\ \cline{3-3} 
&&7 9 6 10 2 0 5 8 1 3 4 
\\ \hline
11&5&
10 8 6 9 2 0 1 4 7 5 3 
\\ \cline{3-3} 
&&10 7 9 4 2 0 6 8 1 5 3 
\\ \cline{3-3} 
&&10 6 7 8 2 0 9 1 4 5 3 
\\ \hline
 \end{tabular}
\caption{$\mu=3$ and $b=11$.}
\label{tab:tab3-11a}
\end{center}
\end{table}

\newpage

\section{Base $b=15$}

\begin{table}[!h]
\begin{center}
  \begin{tabular}{ |c | c | c|  }
    \hline
$b+d$&intersect&result\\ \hline

15&1&
14 12 13 9 7 5 2 0 1 10 11 6 8 3 4 
\\ \cline{3-3} 
&&13 14 12 8 5 3 11 0 10 1 2 9 4 6 7 
\\ \cline{3-3} 
&&12 13 11 14 4 10 3 0 2 9 1 5 7 8 6 
\\ \cline{3-3} 
&&11 9 14 5 8 4 13 0 6 12 10 7 1 2 3 
\\ \hline
15&2& 
14 12 13 9 7 5 2 0 1 10 11 6 8 3 4 
\\ \cline{3-3} 
&&13 14 10 11 12 4 2 0 3 1 9 7 5 8 6 
\\ \cline{3-3} 
&&12 13 14 7 5 8 2 0 10 11 1 4 9 6 3 
\\ \cline{3-3} 
&&11 9 12 6 14 10 2 0 13 8 3 1 4 7 5 
\\ \hline

15&3&
14 12 13 9 7 5 2 0 1 10 11 6 8 3 4 
\\ \cline{3-3} 
&&13 14 12 8 6 7 2 0 1 11 9 10 4 5 3 
\\ \cline{3-3} 
&&11 13 10 7 14 12 2 0 1 4 6 9 3 8 5 
\\ \cline{3-3} 
&&12 10 8 13 11 14 2 0 1 5 3 7 9 4 6 
\\ \hline

15&4& 
14 12 13 9 7 5 2 0 1 10 11 6 8 3 4 
\\ \cline{3-3} 
&&13 11 14 8 10 5 2 0 1 12 9 7 3 4 6 
\\ \cline{3-3} 
&&12 14 9 10 13 5 2 0 1 11 8 3 4 6 7 
\\ \cline{3-3} 
&&11 13 10 12 14 5 2 0 1 4 6 9 7 8 3 
\\ \hline

15&5& 
14 12 13 9 7 5 2 0 1 10 11 6 8 3 4 
\\ \cline{3-3} 
&&13 11 14 8 10 5 2 0 1 12 7 9 3 6 4 
\\ \cline{3-3} 
&&12 14 9 10 13 5 2 0 1 11 6 3 7 8 4 
\\ \cline{3-3} 
&&11 13 10 14 12 5 2 0 1 6 3 8 9 7 4 
\\ \hline
  \end{tabular}

\caption{$\mu=4$ and $b=15$.}
  \label{tab:tab4-15a}
\end{center}
\end{table}

\newpage \ 

\begin{table}[!h]
\begin{center}
  \begin{tabular}{ |c | c | c|  }
    \hline
$b+d$&intersect&result\\ \hline

15&15&
14 12 13 9 7 5 2 0 1 10 11 6 8 3 4 
\\ \hline
16&16&
14 12 15 13 7 5 2 0 1 3 11 9 10 6 4 8 
\\ \hline
17&17&
14 15 16 10 7 5 2 0 1 3 13 11 12 8 6 4 9 
\\ \hline
18&18&
16 17 12 10 7 5 2 0 1 3 15 13 11 14 8 6 4 9 
\\ \hline
19&19&
16 17 18 10 7 5 2 0 1 3  \ldots\\ && 4 15 13 11 14 8 6 12 9 
\\ \hline
20&20&
18 19 14 10 7 5 2 0 1 3  \ldots\\ && 4 17 15 16 12 9 6 8 13 11 
\\ \hline
21&21&
20 17 14 10 7 5 2 0 1 3  4  \ldots\\ &&18 19 15 13 9 6 16 8 11 12 
\\ \hline
22&22&
20 21 14 10 7 5 2 0 1 3 4  \ldots\\ && 19 6  16 18 13 8 9 17 15 11 12 
\\ \hline
23&23&
22 19 14 10 7 5 2 0 1 3 4 21  \ldots\\ && 6 20 17 13 8 9 18 11 15 16 12 
\\ \hline
24&24&
22 23 14 10 7 5 2 0 1 3 4 9 6  \ldots\\ && 21 19 15 20 11 8 18 12 17 13 16 
\\ \hline
25&25&
24 21 14 10 7 5 2 0 1 3 4 9 6 22  \ldots\\ &&23 15  20 11 8 19 12 13 18 16 17 
\\ \hline
26&26&
24 25 14 10 7 5 2 0 1 3 4 9 6 23 8  \ldots\\ &&22 18 11 12 13 20 21 17 15 19 16 
\\ \hline
27&27&
26 23 14 10 7 5 2 0 1 3 4 9 6 25 8  \ldots\\ &&24 18 11 12 13 22 15 19 21 16 20 17 

\\ \hline
28&28&
26 27 14 10 7 5 2 0 1 3 4 9 6 11 8 25 \ldots\\ &&22 17 12 13 24 15 23 20 18 16 21 19 

\\ \hline
29&29&
28 25 14 10 7 5 2 0 1 3 4 9 6 11 8 27  \ldots\\ &&24 17 12 13 26 15 16 22 20 18 23 21 19 
\\ \hline
 \end{tabular}

\caption{$\mu=4$ and $b=15$ (Only one partial transversal is needed in each case).}
  \label{tab:tab4-15b}
\end{center}
\end{table}

\newpage \ 

\section*{References}

\bibliographystyle{elsarticle-harv} 
\bibliography{ref}
\end{document}